\numberwithin{equation}{section}
\newtheorem{theorem}{Theorem}[section]
\newtheorem{proposition}[theorem]{Proposition}
\newtheorem{corollary}[theorem]{Corollary}
\newtheorem{lemma}[theorem]{Lemma}
\theoremstyle{definition}
\newtheorem{definition}[theorem]{Definition}
\newtheorem{example}[theorem]{Example}
\newtheorem{remark}[theorem]{Remark}
\newtheorem{question}[theorem]{Question}
\def\<{\langle}
\def\>{\rangle}
\def\e{\varepsilon}
\def\D{{\Delta}}
\def\R{{\mathbb R}}
\def\Z{{\mathbb Z}}
\def\ent{{\rm ent}}
\def\diam{\mathop{\rm diam}\nolimits}
\def\T{{\mathbb T}}
\def\1{\mathbf 1}
\newcommand{\cf}{{\it cf.}}
\newcommand{\ie}{{\it i.e.}}
\newcommand{\eg}{{\it e.g.}}
\newcommand{\vol}{{\rm vol}}
\newcommand{\length}{{\rm length}}
\newcommand{\eent}{{\rm ent}}
\newcommand{\UW}{{\rm UW}}
\newcommand{\Rad}{{\rm Rad}}
\newcommand{\Cone}{{\rm Cone}}
\long\def\forget#1\forgotten{} %
\begin{document}

\title{Minimal volume entropy of simplicial complexes} 

\author[I.~Babenko]{Ivan Babenko}
\author[S.~Sabourau]{St\'ephane Sabourau}

\thanks{Partially supported by the ANR project Min-Max (ANR-19-CE40-0014).}

\address{Universit\'e Montpellier II, CNRS UMR 5149,
Institut Montpelli\'erain Alexander Grothendieck,
Place Eug\`ene Bataillon, B\^at. 9, CC051, 34095 Montpellier CEDEX 5, France} 

\email{ivan.babenko@umontpellier.fr}

\address{\parbox{\linewidth}{Univ Paris Est Creteil, CNRS, LAMA, F-94010 Creteil, France \\
Univ Gustave Eiffel, LAMA, F-77447 Marne-la-Vall\'ee, France}}


\email{stephane.sabourau@u-pec.fr}

\subjclass[2010]
{Primary 53C23; Secondary 57N65}

\keywords{Minimal volume entropy, simplicial volume, collapsing, exponential growth, algebraic entropy, Urysohn width, Margulis' constant, loop space.}

\begin{abstract}
This article deals with topological assumptions under which the minimal volume entropy of a closed manifold, and more generally of a finite simplicial complex, vanishes or is positive.
In the first part of the article, we present complementing topological conditions expressed in terms of the growth of the fundamental group of the fibers of maps from a given finite simplicial complex to simplicial complexes of lower dimension which ensure that the minimal volume entropy of the simplicial complex either vanishes or is positive.
We also give examples of finite simplicial complexes with zero simplicial volume and arbitrarily large minimal volume entropy.
In the second part of the article, we present topological assumptions related to the exponential growth of certain subgroups in the fundamental group of a finite simplicial complex and to the topology of the loop space of its classifying space under which the minimal volume entropy is positive.
Several examples are presented throughout the text.
\end{abstract}

\maketitle



\section{Introduction}

The notion of volume entropy has attracted a lot of attention since the early works of Efremovich~\cite{Efrem53}, \v{S}varc~\cite{Shvarts55} and Milnor~\cite{Milnor68}.
This Riemannian invariant describes the asymptotic geometry of the universal cover of a Riemannian manifold and is related to the growth of its fundamental group; see~\cite{Shvarts55} and~\cite{Milnor68}.
It is also connected to the dynamics of the geodesic flow.
More specifically, the volume entropy agrees with the topological entropy of the geodesic flow of a closed nonpositively curved manifold and provides a lower bound for it in general; see~\cite{Dinaburg71} and~\cite{Manning79}.
In this article, we study the minimal volume entropy of a closed manifold (and more generally of a finite simplicial complex), a topological invariant introduced by Gromov~\cite{gro82} related to the simplicial volume.
More precisely, we give topological conditions which ensure, in one case, that the minimal volume entropy of a finite simplicial complex is positive and, in the other case, that it vanishes.
Before stating our results, we need to introduce a some definitions.
Unless stated otherwise, all spaces are path-connected.

\medskip

The \emph{volume entropy} of a connected finite simplicial complex~$X$ with a piecewise Riemannian metric~$g$ is the exponential growth rate of the volume of balls in the universal cover of~$X$.
More precisely, it is defined as
\begin{equation}\label{eq:entropy.comp.space}
\ent(X, g) =\lim_{R \to \infty} \frac{1}{R} \log[ \vol \, \widetilde{B}(R)]
\end{equation}
where $\widetilde{B}(R)$ is a ball of radius~$R$ centered at any point in the universal cover of~$X$.
The limit exists and does not depend on the center of the ball. 
Observe that the volume entropy of a finite simplicial complex with a piecewise Riemannian metric is positive if and only if its fundamental group has exponential growth; see Definition~\ref{def:algent}.


The \emph{minimal volume entropy} of a connected finite simplicial $m$-complex~$X$, also known as \emph{asymptotic volume}, see~\cite{B93}, is defined as
\[
\omega(X) = \inf_g \, \ent(X,g) \, \vol(X,g)^\frac{1}{m}
\]
where $g$ runs over the space of all piecewise Riemannian metrics on~$X$.
This topological invariant is known to be an homotopic invariant for closed manifolds~$M$, see~\cite{B93}, and more generally, an invariant depending only on the image of the fundamental class of~$M$ under the classifying map, see~\cite{Brunnbauer08}.
The exact value of the minimal volume entropy (when nontrivial) of a closed manifold is only known in a few cases; see~\cite{katok}, \cite{BCG91}, \cite{Sambusetti99}, \cite{Sambusetti00}, \cite{CB03}, \cite{merlin}.
For instance, the minimal volume entropy of a closed $m$-manifold~$M$ which carries a hyperbolic metric is attained by the hyperbolic metric and is equal to $(m-1) \, \vol(M,{\rm hyp})^\frac{1}{m}$; see~\cite{katok} for $m=2$ and~\cite{BCG91} for~$m \geq 3$.


The \emph{simplicial volume} of a connected closed orientable $m$-manifold~$M$ is defined as
\[
\Vert M \Vert_\Delta = \inf_c \sum_s |r_s|
\]
where the infimum is taken over all real singular $m$-cycles~$c= \sum_s r_s \, \sigma_s$ representing the fundamental class of~$M$ in~$H_m(M;\Z)$.
Here, the maps $\sigma_s:\Delta^m \to M$ are singular $m$-simplices.
The definition extends to finite simplicial $m$-complexes~$X$ whose fundamental class is well-defined, that is, with $H_m(X;\Z)=\Z$.

\medskip

The following inequality of Gromov~\cite[p.~37]{gro82} connects the minimal volume entropy of a connected closed manifold to its simplicial volume (see also~\cite{BK19} for a presentation of this result).
Namely, every connected closed orientable $m$-manifold~$M$ satisfies
\begin{equation} \label{eq:gro}
\omega(M)^m \geq c_m \, \Vert M \Vert_\Delta
\end{equation}
for some positive constant~$c_m$ depending only on~$m$.
Thus, every closed manifold with positive simplicial volume has positive minimal volume entropy.
In particular, the minimal volume entropy of a closed manifold which carries a negatively curved metric is positive; see~\cite{gro82}.
Other topological conditions ensuring the positivity of the minimal volume entropy have recently been obtained by the second author; see~\cite{sab17}.
These conditions are related to the topology of the loop space of the manifold and will be generalized in this article.
In a different direction, the minimal volume entropy provides a lower bound both on the minimal volume, see~\cite{gro82}, and on the systolic volume of a closed manifold, see~\cite{sab} and~\cite{Brunnbauer08}.

\medskip

A natural question to ask in view of~\eqref{eq:gro} is whether every closed orientable manifold with zero simplicial volume has zero minimal volume entropy.
This is known to be true in dimension two~\cite{katok} and in dimension three~\cite{pieroni} (see also~\cite{AP03} combined with Perelman's resolution of Thurston's geometrization conjecture), where the minimal volume entropy is proportional to the simplicial volume.
In dimension four, the same holds true but only for closed orientable geometrizable manifolds; see~\cite{SS09}.
The techniques developed in this article allow us to provide a negative answer for finite simplicial complexes; see Theorem~\ref{theo:ex}.
The question for closed orientable manifolds remains open despite recent progress made with the introduction of the volume entropy semi-norm; see~\cite{BS}.
This geometric semi-norm in homology measures the minimal volume entropy of a real homology class throughout a stabilization process.
Namely, given a path-connected topological space~$X$, it is defined for every ${\bf a} \in H_m(X;\Z)$ as
\begin{equation} \label{eq:defE}
\Vert {\bf a} \Vert_E = \lim_{k \to \infty} \frac{\omega(k \, {\bf a})^m}{k}
\end{equation}
where $\omega({\bf a})$ is the infimum of the minimal relative volume entropy of the maps $f:M \to X$ from an orientable connected closed $m$-pseudomanifold~$M$ to~$X$ such that $f_*([M])={\bf a}$; see~\cite{BS} for a more precise definition.
The volume entropy semi-norm shares similar functorial features with the simplicial volume semi-norm.
Moreover, the two semi-norms are equivalent in every dimension.
That is,
\begin{equation} \label{eq:BS}
c_m \,  \Vert {\bf a} \Vert_\Delta \leq  \Vert {\bf a} \Vert_E \leq C_m \, \Vert {\bf a} \Vert_\Delta
\end{equation}
for some positive constants $c_m$ and~$C_m$ depending only on~$m$.
Thus, a closed manifold with zero simplicial volume has zero volume entropy semi-norm.
See~\cite{BS} for further details.

\medskip

More generally, one may ask for a topological characterization of closed manifolds or simplicial complexes with positive minimal volume entropy.
Such a topological characterization holds for the systolic volume, a topological invariant sharing similar properties with the minimal volume entropy; see~\cite{B93}, \cite{B06}, \cite{B08}, \cite{Brunnbauer08}.
Namely, a closed manifold or simplicial complex has positive systolic volume if and only if it is essential (\cf~Definition~\ref{def:essential}); see~\cite{gro83} and~\cite{B93}.
Though this condition is necessary to ensure that a closed manifold or simplicial complex has positive minimal volume entropy, see~\cite{B93}, it is not sufficient.
Therefore, one should look for stronger or extra assumptions.

\medskip

In the first part of the article, we present topological conditions in this direction.
The first one implies that the minimal volume entropy of a given simplicial complex vanishes and the second one ensures it is positive.
Both these conditions are expressed in terms of the exponential growth of the fundamental group of the fibers of maps between a given simplicial complex and simplicial complexes of lower dimension.

\medskip

Recall that a finitely presented group~$G$ has \emph{exponential growth} if the volume entropy of its Cayley graphs is nonzero and has \emph{uniform exponential growth} at least~$h>0$ if the volume entropy of all its Cayley graphs is at least~$h$; see Definition~\ref{def:algent}.


\medskip

For our topological conditions, we consider connected finite simplicial $m$-complexes~$X$ along with simplicial maps $\pi:X \to P$ to finite simplicial complexes~$P$ of dimension at most~$m-1$. 
We denote by $i_*:\pi_1(F_p) \to \pi_1(X)$ the homomorphism induced by the inclusion map $i:F_p \hookrightarrow X$ of a connected component~$F_p$ of a fiber~$\pi^{-1}(p)$ of~$\pi$.

\medskip

The first condition considered for~$X$ is the fiber $\pi_1$-growth collapsing assumption (or collapsing assumption for short).

\medskip

\textbf{Fiber $\pi_1$-growth collapsing assumption.}
Suppose there exists a simplicial map $\pi:X \to P$ to a finite simplicial complex~$P$ of dimension at most~$m-1$ such that for every connected component~$F_p$ of every fiber~$\pi^{-1}(p)$ with $p \in P$, the finitely presented subgroup~$i_*[\pi_1(F_p)]$ of~$\pi_1(X)$ has subexponential growth.

\medskip

This topological condition ensures that the minimal volume entropy of~$X$ vanishes.

\begin{theorem} \label{theo:omega=0}
Every connected finite simplicial $m$-complex~$X$ satisfying the fiber $\pi_1$-growth collapsing assumption has zero minimal volume entropy, that is, 
\[
\omega(X) = 0.
\]
\end{theorem}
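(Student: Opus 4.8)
The plan is to produce, for every $\e>0$, a piecewise Riemannian metric $g_\e$ on $X$ with $\vol(X,g_\e)\lesssim \e^{\,m-\dim P}$ and with controlled volume entropy, so that
$\ent(X,g_\e)\,\vol(X,g_\e)^{\frac1m}\to 0$ as $\e\to 0$; since $\dim P\le m-1$ we already have $\vol(X,g_\e)^{\frac1m}\lesssim\e^{\frac1m}\to 0$, so the real point is to keep $\ent(X,g_\e)$ from blowing up too fast in $\e$. As a preliminary reduction I would subdivide so that $\pi\colon X\to P$ is simplicial and the simplices of the universal cover $\widehat P$ have trivial stabilizers. I then take $g_\e$ to be an \emph{anisotropic collapsing metric}: on each simplex of $X$, in affine coordinates adapted to $\pi$, the directions mapped isomorphically into $P$ carry a fixed size and the directions killed by $\pi$ carry size $\e$; these patch together up to bi-Lipschitz constants depending only on $X$. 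With this choice $\pi\colon (X,g_\e)\to(P,d_0)$ is $1$-Lipschitz up to a constant for a fixed metric $d_0$ on $P$, every fiber has $g_\e$-diameter $\lesssim\e$, and $\vol(X,g_\e)\lesssim\e^{\,m-\dim P}$ because each $m$-simplex of $X$ maps onto a simplex of dimension $\le\dim P$ and is therefore collapsed in at least $m-\dim P$ directions.

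The core of the argument is the entropy estimate. Lift to universal covers: $\pi$ lifts to an equivariant map $\widetilde\pi\colon\widetilde X\to\widehat P$ (equivariant for $\pi_*\colon\pi_1(X)\to\pi_1(P)$), which is almost $1$-Lipschitz onto $(\widehat P,\widehat d_0)$, so it sends $\widetilde B(R)$ into a ball of radius $R+O(1)$ in $\widehat P$. The number of simplices of $\widehat P$ in such a ball is at most $A\,e^{h_0 R}$ with $h_0=\ent(P,d_0)$ fixed. Over each simplex $\sigma$ of $\widehat P$ the piece $\widetilde\pi^{-1}(\sigma)$ is a disjoint union of translates $k\cdot F_0$ of lifts of fiber components, indexed by cosets $k\,i_*[\pi_1(F_p)]$ with $\pi_*(k)$ prescribed, each of volume $\lesssim\e^{\,m-\dim P}$, and such a translate meets $\widetilde B(R)$ only if $k$ has a loop representative of $g_\e$-length $<R+O(1)$. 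Summing over $\sigma$ collapses the base index and gives
\[
\vol\widetilde B(R)\ \lesssim\ \e^{\,m-\dim P}\cdot\#\{\,k\in\pi_1(X):\ \ell_{g_\e}(k)<R+O(1)\,\},
\]
where $\ell_{g_\e}(k)$ is the length of the shortest $g_\e$-loop representing $k$. Hence $\ent(X,g_\e)$ is bounded by the exponential growth rate of $\pi_1(X)$ for the weighted word length in which the horizontal generators have length $\asymp 1$ and the fiber generators have length $\asymp\e$. To estimate that rate I would cut a $g_\e$-short loop into at most $O(R)$ horizontal edges — finitely many patterns, of controlled exponential rate, which is what produces the $h_0$ term — separated by segments lying inside a single fiber component; the group element contributed by such a segment lies in a conjugate of some $i_*[\pi_1(F_p)]$, and the number of distinct such elements of word length $r$ is at most $b(r)$, where $b$ is the (fixed) maximum of the growth functions of the finitely many subgroups $i_*[\pi_1(F_p)]$, which is subexponential precisely by hypothesis.

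The step I expect to be the main obstacle is converting this bookkeeping into a bound on $\ent(X,g_\e)$ that actually beats the volume factor. A careless use of the uniform collapse (every fiber direction of size exactly $\e$) makes $\ent(X,g_\e)$ grow like $\e^{-\beta}$ when the fiber subgroups have growth of type $e^{r^{\beta}}$, and this is absorbed by $\vol(X,g_\e)^{\frac1m}\sim\e^{(m-\dim P)/m}$ only when $\beta<(m-\dim P)/m$, hence \emph{not} for every subexponential growth. The remedy I would pursue is to tie the fiber scale in $g_\e$ to the rate at which the subexponential estimate $b(r)\le C_\delta\,e^{\delta r}$ takes hold: run a diagonal construction over $\delta\to 0$, choosing for each $\delta$ a collapsing parameter $\e=\e(\delta)\to 0$ small enough that the finite, $\delta$-dependent constants $C_\delta$ are swallowed by powers of $\e(\delta)$ while the $\e(\delta)^{\,m-\dim P}$ gain in volume is preserved; equivalently, one replaces the uniform $\e$-thickening of a fiber component by a metric modeled on a large truncated Cayley complex of $i_*[\pi_1(F_p)]$, rescaled so that at every scale seen by a ball of radius $R$ the fiber contributes only subexponentially. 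Carrying this out — and verifying that the resulting objects are genuine piecewise Riemannian metrics on $X$ with the asserted volume and entropy bounds — is the technical heart; once it is in place, $\ent(X,g_\e)\,\vol(X,g_\e)^{\frac1m}\to 0$, and since $\omega(X)$ is the infimum of this quantity over all metrics, $\omega(X)=0$.
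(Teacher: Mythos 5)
Your construction is the same as the paper's. After making the fibers connected, the paper also collapses uniformly, taking $g_t=\pi^*(h_P)+t^2h_X$, notes $\vol(X,g_t)\to0$ (your $\e^{\,m-\dim P}$ bound), and estimates the entropy by cutting every edge-loop of $g_t$-length at most $T$ into at most $T$ ``long'' (horizontal) edges and fiber segments $\beta_i$ --- exactly your horizontal/fiber bookkeeping, done downstairs with loop counting rather than upstairs with cosets. The difference is in the one step you yourself flag as the technical heart and do not carry out: bounding $\ent(X,g_t)$. The paper closes it by letting the subexponential parameter depend on $t$: fixing $\e=t^2$, it applies the bound $\mathcal{N}(F_p\subseteq(X,h_X);s)\le e^{\e s}$ at the rescaled lengths $s=(\ell_i+1)/t$ of the fiber segments, and sums to get $\ent(X,g_t)\le\log n_e+2t$, uniformly in $t$; since the volume tends to zero, $\omega(X)=0$. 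In other words, the paper asserts that the uniform collapse already handles every subexponential growth type, with no $\e^{-\beta}$ dichotomy and no metrics modeled on Cayley complexes.

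As written, your proposal is therefore incomplete rather than wrong, and the sketched remedies do not close the gap. If you bound each fiber segment by $C_\delta e^{\delta(\ell_i+1)/\e}$ with a constant valid at \emph{all} lengths, then (since there can be of order $T$ segments of bounded $g_\e$-length) you get $\ent(X,g_\e)\le\log n_e+\log C_\delta+2\delta/\e$, hence $\ent\cdot\vol^{1/m}\lesssim(\log n_e+\log C_\delta+\delta/\e)\,\e^{(m-\dim P)/m}$; making the $\delta/\e$ term small forces $\delta\lesssim\e^{\dim P/m}$, after which nothing guarantees that $\log C_\delta$ is ``swallowed by powers of $\e$'' --- the constants $C_\delta$ are dictated by the growth function and can grow arbitrarily fast as $\delta\to0$, so the order of quantifiers in your diagonal scheme is circular. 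Your second remedy is only a phrase: the metric must live on the fixed finite complex $X$ (up to subdivision), so ``modeling a fiber on a truncated Cayley complex'' is not yet a construction, and no volume or entropy estimate is attached to it. The way the paper avoids your obstacle is to never introduce length-uniform constants: the subexponential estimate is invoked only at the rescaled scales $(\ell_i+1)/t\ge1/t$, which tend to infinity as $t\to0$, with $\e=\e(t)\to0$ chosen in terms of $t$. Your instinct that this is the delicate point is sound --- one does need the estimate to be valid at scale $1/t$ for the chosen $\e(t)$, a condition the paper's choice $\e=t^2$ uses implicitly --- but a complete proof must actually carry out such an estimate, and your write-up stops exactly there.
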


Since closed manifolds admitting an $F$-structure satisfy the collapsing assumption, see Definition~\ref{def:F-structure} for the definition of an $F$-structure, we obtain the following result.
Note that this result can be derived from Paternain and Petean's work on the connection between the topological entropy of the geodesic flow and $F$-structures; see~\cite[Theorem~A]{PP03}.

\begin{corollary} \label{coro:F-structure}
Every closed manifold admitting an $F$-structure (of possibly zero rank) has zero minimal volume entropy.
\end{corollary}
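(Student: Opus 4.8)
The plan is to verify that a closed $m$-manifold $M$ carrying an $F$-structure satisfies the fiber $\pi_1$-growth collapsing assumption, and then to apply Theorem~\ref{theo:omega=0}. First I would unwind Definition~\ref{def:F-structure}: an $F$-structure on $M$ is given by a finite open cover $\{U_i\}$, finite normal coverings $\widetilde U_i \to U_i$ carrying effective torus actions that are compatible on overlaps in the sense of Cheeger--Gromov; this data partitions $M$ into closed orbits and produces a continuous map $q \colon M \to B$ onto the orbit space. The first thing to check is that $\dim B \le m-1$: even when the $F$-structure has rank zero, every point of $M$ lies in the closure of positive-dimensional orbits, so $B$ carries no $m$-dimensional stratum, the point-orbits of the zero-rank locus being collapsed onto strata of dimension $\le m-1$.

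Next I would convert $q$ into a simplicial map of the correct dimension. Starting from a triangulation of $M$ subordinate to $\{U_i\}$ and passing to a sufficiently fine iterated barycentric subdivision, one obtains a finite simplicial complex $P$ with $\dim P \le m-1$ together with a simplicial map $\pi \colon M \to P$ approximating $q$ so closely that each fiber $\pi^{-1}(p)$ is contained in a saturated neighborhood $q^{-1}(V_p)$ of a single orbit lying in one chart $U_i$. By the local structure theory for $F$-structures, the fundamental group of such a saturated neighborhood is finitely generated and virtually nilpotent.

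Now let $F_p$ be a connected component of a fiber $\pi^{-1}(p)$. The homomorphism $i_* \colon \pi_1(F_p) \to \pi_1(M)$ factors through $\pi_1\big(q^{-1}(V_p)\big)$, so its image $i_*[\pi_1(F_p)]$ is a finitely generated subgroup of a finitely generated virtually nilpotent group; hence it is itself virtually nilpotent -- in particular finitely presented -- and therefore has polynomial growth, in particular subexponential growth. This is exactly the fiber $\pi_1$-growth collapsing assumption for $\pi \colon M \to P$, so Theorem~\ref{theo:omega=0} yields $\omega(M)=0$. The hard part is the simultaneous simplicial approximation in the second step: one must triangulate $M$ compatibly with all charts at once and arrange $\pi$ so that $\dim P \le m-1$ while keeping each fiber component inside a single chart; in particular, controlling the zero-rank locus so that it does not force an $m$-dimensional piece of $P$ is the delicate point. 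Everything beyond that is a direct invocation of the quoted theorem.
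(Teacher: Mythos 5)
Your proposal is correct and follows essentially the same route as the paper: both verify the fiber $\pi_1$-growth collapsing assumption for the orbit map of the $F$-structure, using that the trivial-orbit locus has positive codimension (so the orbit space has dimension at most $m-1$) and that saturated neighborhoods of orbits have virtually abelian (hence subexponential-growth) fundamental groups, and then invoke Theorem~\ref{theo:omega=0}. The simplicial-approximation step you flag as delicate is glossed over in the paper's proof, but your treatment of it is consistent with the tools the paper provides (Propositions~\ref{prop:connected} and~\ref{prop:cover}).
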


The following corollary presents the effect of the collapsing assumption on the simplicial volume.
It is a direct consequence of Theorem~\ref{theo:omega=0} showing that $\omega(M)=0$ and the inequality~\eqref{eq:gro} providing an upper bound on $||M||_\Delta$ in terms of~$\omega(M)$.
An alternate proof which does not rely on Theorem~\ref{theo:omega=0} is given in Section~\ref{sec:zerovol}.

\begin{corollary} \label{coro:zerovol}
Every closed manifold~$M$ satisfying the collapsing assumption has zero simplicial volume, that is,
\[
||M||_\Delta =0.
\]
\end{corollary}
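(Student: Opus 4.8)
The plan is to read the statement off the two facts already recalled in the introduction. By Theorem~\ref{theo:omega=0}, the collapsing assumption forces $\omega(M)=0$. If $M$ is orientable, inequality~\eqref{eq:gro} then gives $0=\omega(M)^m\ge c_m\,\Vert M\Vert_\Delta\ge 0$ with $c_m>0$, hence $\Vert M\Vert_\Delta=0$. If $M$ is non-orientable, I would pass to the orientable double cover $\widehat M\to M$: the universal covers of $M$ and $\widehat M$ coincide, so the volume entropy of any (pulled-back) metric is unchanged while the volume doubles, whence $\omega(\widehat M)\le 2^{1/m}\,\omega(M)=0$; applying~\eqref{eq:gro} to the closed orientable manifold $\widehat M$ and using $\Vert M\Vert_\Delta=\frac12\Vert\widehat M\Vert_\Delta$ finishes the argument. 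With Theorem~\ref{theo:omega=0} in hand this is essentially a one-line deduction.

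I would additionally record the self-contained argument of Section~\ref{sec:zerovol}, which proves $\Vert M\Vert_\Delta=0$ without passing through the minimal volume entropy, via Gromov's vanishing theorem for amenable covers~\cite{gro82}. Let $\pi:M\to P$ be a simplicial map as in the collapsing assumption, with $\dim P\le m-1$. Cover $P$ by the open stars $\operatorname{St}(v)$ of its vertices: a point in the interior of a $k$-simplex of $P$ lies in exactly $k+1$ of these stars, so the cover has multiplicity at most $\dim P+1\le m$, and the pulled-back cover $\mathcal U=\{\pi^{-1}(\operatorname{St}(v))\}_v$ of $M$ has multiplicity at most $m=\dim M$ (if $\pi^{-1}(\operatorname{St}(v_0))\cap\cdots\cap\pi^{-1}(\operatorname{St}(v_j))\neq\varnothing$ then $v_0,\dots,v_j$ span a simplex of $P$, so $j+1\le\dim P+1\le m$). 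Next, $\pi^{-1}(\operatorname{St}(v))$ deformation retracts onto the fiber $F_v=\pi^{-1}(v)$: since $\pi$ is affine on each simplex, one pushes $\pi$-images linearly toward $v$ inside each simplex of $P$ and lifts this homotopy simplex-by-simplex in $M$, redistributing barycentric mass onto the vertices mapping to $v$. Consequently each connected component $U$ of each $\pi^{-1}(\operatorname{St}(v))$ deformation retracts (within $M$) onto a connected component $F_p$ of a fiber of $\pi$, so the image of $\pi_1(U)$ in $\pi_1(M)$ equals $i_*[\pi_1(F_p)]$, a finitely generated group of subexponential growth, hence amenable. Thus $\mathcal U$ is an amenable open cover of $M$ of multiplicity at most $\dim M$, and Gromov's vanishing theorem yields $\Vert M\Vert_\Delta=0$ (in the non-orientable case apply it to $\widehat M$, whose pulled-back cover is still amenable with the same multiplicity, and use $\Vert M\Vert_\Delta=\frac12\Vert\widehat M\Vert_\Delta$).

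The main obstacle lies entirely in the self-contained argument, in the assertion that the preimage of an open star under a simplicial map deformation retracts onto the preimage of the vertex: one must construct the retraction on each simplex of $M$ from the linear ``push toward $v$'' on $P$, verify that these local formulas agree on common faces so they glue to a genuine homotopy staying inside $\pi^{-1}(\operatorname{St}(v))$, and — crucially — check that it carries each component $U$ onto a \emph{single} fiber component $F_p$ and induces the correct homomorphism $\pi_1(U)\to\pi_1(M)$, so that the amenability hypothesis transfers. One also needs the standard facts that finitely generated groups of subexponential growth are amenable and the precise boundary form of Gromov's vanishing theorem (amenable cover of multiplicity $\le\dim M$). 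Since Theorem~\ref{theo:omega=0} is available, I would present the short deduction from~\eqref{eq:gro} as the proof and treat the amenable-cover argument as a complementary remark.
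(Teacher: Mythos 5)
Your proposal is correct and follows essentially the same route as the paper: the paper proves the corollary exactly as in your first paragraph (Theorem~\ref{theo:omega=0} combined with inequality~\eqref{eq:gro}), and your amenable-cover argument reproduces the paper's alternate proof in Section~\ref{sec:zerovol}, where the open-star/deformation-retraction step is packaged as Proposition~\ref{prop:cover} before invoking subexponential growth $\Rightarrow$ amenability and Gromov's vanishing theorem (Theorem~\ref{theo:vanishing}). Your explicit treatment of the non-orientable case via the double cover is a minor addition the paper leaves implicit.
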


\medskip

The second condition considered for~$X$ is the fiber $\pi_1$-growth non-collapsing assumption (or non-collapsing assumption for short).

\medskip

\textbf{Fiber $\pi_1$-growth non-collapsing assumption.}
Suppose that for every simplicial map $\pi:X \to P$ to a finite simplicial complex~$P$ of dimension~$m-1$, there exists a connected component~$F_{p_0}$ of some fiber~$\pi^{-1}(p_0)$ with $p_0 \in P$ such that the finitely presented subgroup~$i_*[\pi_1(F_{p_0})]$ of~$\pi_1(X)$ has uniform exponential growth at least~$h$ for some $h=h(X)>0$ depending only on~$X$.

\medskip

This topological condition ensures that the minimal volume entropy of~$X$ does not vanish.

\begin{theorem} \label{theo:omega>0}
Every connected finite simplicial $m$-complex~$X$ satisfying the fiber $\pi_1$-growth non-collapsing assumption has positive minimal volume entropy, that is, 
\[
\omega(X) > 0.
\]
\end{theorem}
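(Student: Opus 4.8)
The plan is to argue by contradiction: suppose $\omega(X)=0$, so there exists a sequence of piecewise Riemannian metrics $g_k$ on $X$ with $\ent(X,g_k)\,\vol(X,g_k)^{1/m}\to 0$. After rescaling we may assume $\vol(X,g_k)=1$, so $\ent(X,g_k)\to 0$. The idea is that vanishing minimal volume entropy, together with unit volume, forces $X$ to collapse in a controlled way, and such a collapse must be encoded by a map $\pi\colon X\to P$ to a complex $P$ of dimension at most $m-1$ whose fibers carry subexponential fundamental-group growth — contradicting the non-collapsing assumption. Concretely, I would invoke the relationship between small volume entropy and small Urysohn width (via Guth-type arguments, already in the authors' toolbox given the keywords): a metric on an $m$-complex with $\ent\,\vol^{1/m}$ small has $(m-1)$-dimensional Urysohn width small, so one obtains a map $f_k\colon X\to P_k$ to an $(m-1)$-dimensional complex with uniformly small fibers. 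Simplicially approximating and passing to a limit, one extracts a single homotopy class of map $\pi\colon X\to P$ with $\dim P\le m-1$.

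The key steps, in order: (1) Reduce to unit-volume metrics $g_k$ with $\ent(X,g_k)=\e_k\to 0$. (2) Show that for such metrics the fibers of a nerve-type map to an $(m-1)$-complex have diameter bounded in terms of $\e_k$ and $\vol$; quantitatively, each connected component $F$ of a fiber sits inside a ball of radius $r_k\to 0$ in $(X,g_k)$. (3) For the component $F_{p_0}$ provided by the non-collapsing assumption, the subgroup $\Gamma_0=i_*[\pi_1(F_{p_0})]\le\pi_1(X)$ has uniform exponential growth $\ge h$. Lift to the universal cover $\widetilde X$ of $X$ and consider the $\Gamma_0$-orbit of a point in the preimage of $F_{p_0}$: this orbit lies within bounded distance (controlled by $r_k$ and the diameter of a generating loop system) of a fixed point, yet its exponential growth at rate $\ge h$ forces the volume of balls in $\widetilde X$ to grow at rate comparable to $h$ — more precisely, a word of length $n$ in the generators of $\Gamma_0$ corresponds to a loop of $g_k$-length at most $n\cdot L_k$ where $L_k$ is the maximal $g_k$-length of the chosen generators, and distinct group elements give distinct points in $\widetilde X$. (4) Combine the $\ge e^{hn}$ count of orbit points in $\widetilde B(nL_k)$ with an upper bound on the number of disjoint unit-size pieces to force $\ent(X,g_k)\ge h/(C L_k)$ for a constant $C$, then show $L_k$ stays controlled (here one must be careful — if $L_k\to\infty$ the bound degenerates), which is where normalizing $\vol=1$ and the smallness of $r_k$ must be leveraged so that the generators of $\Gamma_0$, being loops in the small-diameter fiber $F_{p_0}$ plus bounded-complexity correction, have $g_k$-length bounded independently of $k$.

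The main obstacle I expect is precisely step (4): controlling the $g_k$-length $L_k$ of a generating set of $\Gamma_0$ uniformly in $k$. The subtlety is that the fiber $F_{p_0}$ is topologically fixed but its generating loops could be stretched by the collapsing metric $g_k$; one needs the collapse to be "along" the fibers (small fiber diameter) rather than stretching them, and this requires the Urysohn-width/volume-entropy estimate in step (2) to give control on the intrinsic diameter of fiber components, not merely an extrinsic bound. If the naive approach fails here, the fallback is to run the argument homotopy-invariantly on the classifying space $B\pi_1(X)$ and use that $\omega$ depends only on the image of the fundamental class, converting the length control into a question about the filling of $\Gamma_0$-orbits — but making the quantitative constant $h(X)$ enter correctly is the delicate point. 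A secondary technical point is ensuring that the limiting map $\pi$ can genuinely be taken simplicial to a complex of dimension exactly $\le m-1$, which should follow from simplicial approximation once the width bound is in hand.
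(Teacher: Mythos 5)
Your overall strategy is the right one (volume--Urysohn width inequality, then growth of the fiber subgroup forces large entropy), but the step you yourself flag as the main obstacle --- controlling the length $L_k$ of a generating set of $\Gamma_0=i_*[\pi_1(F_{p_0})]$ --- is a genuine gap, and your proposed fix (getting \emph{intrinsic} diameter control on the fibers, or passing to the classifying space) is not how it closes. The resolution is twofold. First, one does not use a fixed topological generating system of $\pi_1(F_{p_0})$ that the metric might stretch; instead, by the standard \v{S}varc--Milnor covering argument (Gromov, \emph{Metric structures}, Proposition~3.22), the subgroup $\Gamma_U=i_*[\pi_1(U)]$ of a connected open set $U$ is generated by homotopy classes of loops of $X$ of length at most $2\diam_X(U)+\varepsilon$, where $\diam_X(U)$ is the \emph{extrinsic} diameter --- which is exactly what the width inequality bounds. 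This yields $\diam_X(U)\cdot\ent(X,g)\ge\tfrac12\ent(\Gamma_U,S)$ for that particular generating set $S$. Second, since one has no control over \emph{which} generating set $S$ arises this way, one needs a lower bound on $\ent(\Gamma_U,S)$ valid for every generating set; this is precisely why the non-collapsing assumption demands \emph{uniform} exponential growth at least $h(X)$, i.e.\ $\ent(\Gamma_U)=\inf_S\ent(\Gamma_U,S)\ge h(X)$. Combining, $\tfrac12 h(X)\le\diam_X(U_{p_0})\cdot\ent(X,g)\le C'_m\,\ent(X,g)\,\vol(X,g)^{1/m}$, giving $\omega(X)\ge h(X)/(2C'_m)$ directly, with no contradiction argument needed. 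Without invoking uniform growth at this point your step (4) does not go through, and no intrinsic diameter bound on the fibers is available or needed.

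Two smaller corrections. After normalizing $\vol(X,g_k)=1$, Guth's inequality gives fibers of \emph{bounded} extrinsic diameter ($\UW(X)\le C_m^{-1/m}\vol(X)^{1/m}$), not diameter $r_k\to0$ as you assert; the bounded bound is what the argument actually uses. And there is no need to ``pass to a limit'' to extract a single map $\pi$: for each metric $g_k$ one gets a (possibly different) simplicial map $\pi_k\colon X\to P_k$ realizing the width --- made simplicial with connected fibers by simplicial approximation and the quotient construction of Proposition~\ref{prop:connected} --- and the non-collapsing assumption is quantified over \emph{all} such maps, so it applies to each $\pi_k$ separately with the same constant $h(X)$.
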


As an example, we show in Proposition~\ref{prop:hyp} that closed aspherical manifolds whose fundamental group is a non-elementary word hyperbolic group satisfy the non-collapsing assumption.

\medskip

Note that the fibers of the simplicial map $\pi:X \to P$ in the definition of the collapsing and non-collapsing conditions can always be assumed to be connected; see Proposition~\ref{prop:connected}.
We will also give alternative formulations of both the collapsing and non-collapsing assumptions in terms of open covers of the simplicial complex~$X$; see Proposition~\ref{prop:cover}. 

\medskip

Though Theorem~\ref{theo:omega>0} and Theorem~\ref{theo:omega=0} are expressed in almost complementary terms, they do not provide a complete topological characterization of simplicial complexes with positive minimal volume entropy.
As an additional comment, let us mention that the distinction between exponential growth and uniform exponential growth for finitely generated groups is quite subtle.
Examples of finitely generated groups of exponential growth which do not have uniform exponential growth were first constructed by Wilson~\cite{Wilson04}, answering a question of Gromov.
Still, it is an open question whether all \emph{finitely presented} groups of exponential growth have uniform exponential growth.


\medskip



The new techniques developed in this article allow us to investigate the relationship between the minimal volume entropy and the simplicial volume of simplicial complexes whose fundamental class is well-defined.
In view of the lower and upper bounds~\eqref{eq:BS}, one can ask whether there is a complementary inequality to the bound~\eqref{eq:gro}.
Namely, does there exist a positive constant~$C_m$ such that
\[
\omega(M)^m \leq C_m \, \Vert M \Vert_\Delta
\]
for every connected closed orientable $m$-manifold~$M$?
The question also makes sense for every connected finite simplicial $m$-complex~$X$ whose fundamental class is well-defined.
Our next result provides a negative answer in this case.

\begin{theorem} \label{theo:ex}
There exists a sequence of connected finite simplicial complexes~$X_n$ with a well-defined fundamental class such that the simplicial volume of~$X_n$ is bounded and the minimal volume entropy of~$X_n$ tends to infinity.
\end{theorem}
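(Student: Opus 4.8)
The plan is to build the complexes $X_n$ as wedges (or more carefully, as mapping-cylinder-type gluings that keep the fundamental class well-defined) of two ingredients: a fixed piece that forces a well-defined fundamental class of a given dimension~$m$, and a growing ``entropy-carrying'' piece that satisfies the non-collapsing assumption of Theorem~\ref{theo:omega>0} with a growth constant $h(X_n)\to\infty$, while contributing nothing to the simplicial volume. Concretely, I would take $m=2$ or $m=3$ for simplicity and let the entropy-carrying piece $Y_n$ be a finite simplicial complex with $\pi_1(Y_n)$ a finitely presented group of uniform exponential growth at least~$h_n$, with $h_n\to\infty$, and with vanishing contribution to simplicial volume. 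A natural candidate is to take $\pi_1(Y_n)$ to be a free product of $n$ copies of a fixed non-elementary hyperbolic group, or a free group of rank~$n$ embedded appropriately, realized by a $2$-complex; the point is that the minimal entropy of a bouquet of $n$ circles (or the associated $2$-complex) grows with~$n$, whereas its simplicial volume in the relevant degree is zero. The fundamental class is supplied by gluing on a fixed closed orientable surface or $3$-manifold $N$ (or attaching cells) along a point, so that $H_m(X_n;\Z)=\Z$ is generated by~$[N]$; since simplicial volume is additive under such wedge-type constructions and $\|N\|_\Delta$ is a fixed number, $\|X_n\|_\Delta$ stays bounded.

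The core estimate is the lower bound $\omega(X_n)\to\infty$. For this I would verify that $X_n$ satisfies the fiber $\pi_1$-growth non-collapsing assumption with $h(X_n)=h_n\to\infty$: for \emph{any} simplicial map $\pi:X_n\to P$ with $\dim P\le m-1$, some fiber component $F_{p_0}$ must carry a subgroup of $\pi_1(X_n)$ of uniform exponential growth at least~$h_n$. The mechanism is that the entropy-carrying subgroup $G_n:=\pi_1(Y_n)\hookrightarrow\pi_1(X_n)$ cannot be ``spread out'' over a lower-dimensional base: a dimension-counting / covering argument (analogous to the one used to prove Proposition~\ref{prop:hyp} for hyperbolic groups, and to the open-cover reformulation of Proposition~\ref{prop:cover}) shows that a map to an $(m-1)$-complex has a fiber component whose $\pi_1$-image surjects onto, or at least has finite index in, a factor of~$G_n$ of comparable uniform exponential growth. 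Here one must be careful that uniform exponential growth passes to the relevant subgroups with controlled loss; choosing $G_n$ to be a free product of many copies of one fixed group makes this transparent, since every finite-index subgroup and every ``large'' subgroup inherits a definite lower bound on uniform entropy growing with~$n$. Then Theorem~\ref{theo:omega>0} gives $\omega(X_n)>0$, and tracking the constant through its proof (or re-running the argument quantitatively) yields $\omega(X_n)\gtrsim h_n\to\infty$.

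The main obstacle I anticipate is the quantitative bookkeeping in two places. First, ensuring that the non-collapsing constant $h(X_n)$ genuinely tends to infinity rather than merely staying positive: this requires that Theorem~\ref{theo:omega>0}'s proof be effective, producing $\omega(X)\ge c_m\,h(X)$ (or some explicit increasing function of~$h(X)$), and that the subgroups arising in arbitrary fiber decompositions retain uniform exponential growth bounded below by a quantity growing with~$n$ — the subtlety flagged in the text about exponential versus uniform exponential growth means I must use groups (like free products of a fixed hyperbolic group, or free groups) for which uniform exponential growth of large subgroups is robust and computable. Second, keeping the simplicial volume bounded while $\pi_1$ grows: I must choose the gluing so that no new top-dimensional homology and no new contribution to $\|X_n\|_\Delta$ is created by the growing piece $Y_n$ — attaching $Y_n$ along a point, or along a contractible subcomplex, and invoking the behaviour of simplicial volume under wedges and the fact that $Y_n$ can be taken to have $H_m(Y_n;\Z)=0$, handles this, but one should double-check that the amalgamation does not inadvertently force $H_m(X_n;\Z)$ to be larger than~$\Z$ or destroy the well-definedness of the fundamental class. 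Modulo these points, the construction is a direct consequence of Theorem~\ref{theo:omega>0} together with elementary properties of simplicial volume.
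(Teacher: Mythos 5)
Your construction is close in spirit to the paper's (the paper takes $X_n=\T^2\vee\bigl(\bigvee_{i=1}^n X\bigr)$ for a fixed $2$-complex $X$ with $\omega(X)>0$, which keeps $H_2(X_n;\Z)=\Z$ and $\Vert X_n\Vert_\Delta=0$), but the mechanism you propose for $\omega(X_n)\to\infty$ has a genuine gap. You want to deduce divergence from Theorem~\ref{theo:omega>0} by arranging that the non-collapsing constant $h(X_n)$ tends to infinity, i.e.\ that for \emph{every} map $\pi:X_n\to P$ with $\dim P\le m-1$ some fiber component has $\pi_1$-image of uniform exponential growth at least $h_n\to\infty$. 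This fails for exactly the wedge/free-product constructions you suggest: one can always choose $\pi$ (e.g.\ modelled on a map of the nerve of the wedge decomposition) so that every connected fiber component lies inside a single wedge summand, whence its $\pi_1$-image sits inside one free factor and its algebraic entropy is bounded by a constant depending only on that fixed factor, not on $n$. Your claimed ``dimension-counting argument'' showing that some fiber subgroup surjects onto or has finite index in a large factor of $G_n$ is not available; the argument behind Proposition~\ref{prop:hyp} only produces \emph{some} fiber subgroup of exponential growth and then uses hyperbolicity to get a uniform bound $\tfrac{1}{n_G}\log 3$ --- a constant, not a quantity growing with $n$. Consequently Theorem~\ref{theo:B} yields only $\omega(X_n)\ge \tfrac{1}{2C'_m}h$ for a fixed $h$, which does not diverge.

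The paper's actual source of divergence is entirely different: it first uses the non-collapsing assumption (via a $d$-sheeted cover of $X$ containing a $\pi_1$-embedded closed surface and Proposition~\ref{prop:hyp}) only to establish the single inequality $\omega(X)>0$ for one fixed complex, and then invokes the additivity of the volume entropy semi-norm under wedge sums, namely $\omega(X_n)^2=n\,\omega(X)^2$ from \cite[Theorem~2.6]{BS}. That additivity (a nontrivial result of the companion paper, not a consequence of Theorems~\ref{theo:omega>0} or~\ref{theo:B}) is the missing ingredient in your argument; without it, or some substitute for it, your proposal does not establish $\omega(X_n)\to\infty$. The parts of your proposal concerning the well-defined fundamental class and the boundedness (indeed vanishing) of the simplicial volume are fine.
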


This result follows from the construction of simplicial complexes and can be deduced from a variety of techniques, including the techniques developed in the first and second part of this article.


\medskip

In the second part of the article, we present another set of topological conditions ensuring the positivity of the minimal volume entropy of a finite simplicial complex.
This set of topological condition is of different nature than the fiber $\pi_1$-growth collapsing condition and is related to the topology of the loop space of the classifying space of simplicial complexes.
This alternative approach extends the results of~\cite{sab17} and relies on different techniques than those in the first part of the article.

\medskip

In order to state our next result, we need to introduce some definitions.

\begin{definition} \label{def:essential}
A connected finite simplicial $m$-complex~$X$ is \emph{essential} if the classifying map $\Phi:X \to K(\pi_1(X),1)$ cannot be deformed to a map whose image lies in the $(m-1)$-skeleton of~$K(\pi_1(X),1)$.
For instance, every connected closed aspherical manifold is essential.

A discrete group~$G$ is \emph{thick} if there exists~$\delta>0$ such that the algebraic entropy of every finitely generated subgroup of~$G$ with exponential growth is at least~$\delta$.
For instance, the fundamental group of a negatively curved closed manifold is thick.
Further examples of thick groups are given in Example~\ref{ex:thick}.

The free loop space~$\Lambda(K)$ of a connected simplicial complex~$K$ is \emph{$m$-tamed} if there exists a deformation of~$\Lambda(K)$ taking all the noncontractible free loops representing elements in any finitely generated subgroup~$H \leqslant \pi_1(K)$ with subexponential growth to loops lying in a simplicial subcomplex~$Z_H \subseteq K$ of dimension less than~$m$; see Definition~\ref{def:tamed} for a more precise statement.
\end{definition}



Combining these various notions, we obtain another criterion that guarantees the minimal volume entropy of a simplicial complex is nonzero.

\begin{theorem} \label{theo:3}
Let $X$ be a connected finite simplicial $m$-complex~$X$.
Suppose that the simplicial complex~$X$ is essential, the fundamental group of~$X$ is thick and the loop space of the classifying space~$K(\pi_1(X),1)$ is $m$-tamed.
Then the simplicial complex~$X$ has positive minimal volume entropy, that is, 
\[
\omega(X) > 0.
\]
\end{theorem}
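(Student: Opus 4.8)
The plan is to argue by contradiction: assume that $\omega(X) = 0$, which means there is a sequence of piecewise Riemannian metrics $g_n$ on $X$ with $\vol(X,g_n) = 1$ (after rescaling) and $\ent(X,g_n) \to 0$. The strategy is to extract from this collapsing sequence a simplicial map $\pi: X \to P$ to a complex $P$ of dimension at most $m-1$ such that each fiber component $F_p$ has the property that $i_*[\pi_1(F_p)]$ has small algebraic entropy, then combine thickness with $m$-tamedness to push the classifying map into a lower-dimensional skeleton, contradicting essentialness. Concretely, I would first invoke a Urysohn-width / small-ball-covering argument: when $\ent(X,g_n)$ is small and the volume is fixed, balls of a fixed radius in the universal cover grow slowly, and one can build a cover of $X$ by open sets $U_j$ each of which lifts to a bounded-diameter piece in $\wt X$, with controlled multiplicity, so that loops lying inside each $U_j$ generate subgroups $H_j \leqslant \pi_1(X)$ whose algebraic entropy (measured with the word metric coming from the collapsed metric) tends to $0$. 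This is essentially the mechanism underlying Theorem~\ref{theo:omega=0} and the cover reformulation mentioned in Proposition~\ref{prop:cover}.

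Next, because $\pi_1(X)$ is thick, any finitely generated subgroup $H_j \leqslant \pi_1(X)$ with exponential growth has algebraic entropy at least $\delta > 0$; since our estimate forces the entropy of the $H_j$ (or rather the images $i_*[\pi_1(F_p)]$) below $\delta$ for $n$ large, each such subgroup must have \emph{subexponential} growth. This is the step where thickness does its job: it converts a quantitative smallness-of-entropy statement into a qualitative dichotomy (subexponential vs. uniformly bounded below), removing the pathology of groups with exponential but non-uniform growth. At this point I have, after passing to $n$ large, a simplicial map (obtained by taking a nerve of the cover and a simplicial approximation) $\pi: X \to P$ with $\dim P \le m-1$ and every fiber-component subgroup $i_*[\pi_1(F_p)]$ of subexponential growth.

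Now I bring in $m$-tamedness of $\Lambda(K(\pi_1(X),1))$. Composing with the classifying map $\Phi: X \to K := K(\pi_1(X),1)$, each fiber component $F_p$ maps into $K$ carrying its fundamental group into the subexponential-growth subgroup $i_*[\pi_1(F_p)]$; by the definition of $m$-tamed, the free loops representing such elements can be deformed into a subcomplex $Z_H \subseteq K$ of dimension $< m$. Using the mapping-cylinder / fiberwise structure of $\pi$ over the low-dimensional complex $P$, I would assemble these fiberwise deformations into a global homotopy of $\Phi$ onto a map whose image meets each ``fiber direction'' in something of dimension $< m - \dim(\text{base piece})$, so that the total image sits in the $(m-1)$-skeleton of $K$ — exactly contradicting the hypothesis that $X$ is essential.

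The main obstacle I anticipate is the last assembly step: passing from the \emph{pointwise} (per-fiber) deformations furnished by $m$-tamedness to a single globally coherent deformation of $\Phi$ that respects how the fibers are glued over $P$. This requires an obstruction-theoretic or inductive-over-skeleta-of-$P$ argument, where one extends the deformation cell by cell of $P$, and where the dimension bookkeeping ($\dim Z_H < m$ against $\dim P \le m-1$, fibered together) has to be made precise — in particular one must ensure the deformations on overlapping fibers are compatible up to homotopy, which is presumably why the precise statement of $m$-tamedness in Definition~\ref{def:tamed} is phrased with some care. A secondary technical point is making rigorous the extraction of the simplicial map $\pi$ and the subgroups $i_*[\pi_1(F_p)]$ from the metric collapse with honest control on algebraic entropy; this I would handle exactly as in the proof of Theorem~\ref{theo:omega=0}, citing Proposition~\ref{prop:connected} and Proposition~\ref{prop:cover} to normalize the fibers to be connected and to translate between the map formulation and the open-cover formulation.
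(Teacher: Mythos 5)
Your reduction step is fine and is essentially the machinery the paper uses elsewhere: fixing $\vol(X,g_n)=1$, the width inequality (Theorem~\ref{theo:width}) gives a simplicial map $\pi\colon X\to P$ with $\dim P\le m-1$ and fibers of bounded diameter, Proposition~\ref{prop:diam-ent} bounds the algebraic entropy of each $i_*[\pi_1(F_p)]$ by $2\,\diam_X(F_p)\,\ent(X,g_n)\to 0$, and $\delta$-thickness then forces these subgroups to have subexponential growth for $n$ large. (The paper packages this slightly differently, via the Margulis-type constant $\ell_\phi(X)$ of Definition~\ref{def:ell}, Theorem~\ref{theo:FR}, Theorem~\ref{theo:GG} and Proposition~\ref{prop:ent-L}, which also yields the quantitative bound $\omega(X)\ge\lambda_m\delta$ of Theorem~\ref{theo:entvol}; but your version of this step is sound.)

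The genuine gap is the step you yourself flag as ``the main obstacle'': deriving the contradiction with essentiality from tamedness is not carried out, and the mechanism you sketch would not work as stated. Note that ``essential plus a map to an $(m-1)$-complex with subexponential fiber subgroups'' is \emph{not} by itself contradictory (the torus $\T^m$ satisfies both), so this step must use $m$-tamedness in an essential and nontrivial way; it is the actual content of the paper's Theorem~\ref{theo:FR}. The difficulty is that tamedness only deforms \emph{loops} (elements of $\Lambda_H(K)$) into the low-dimensional subcomplexes $Z_H$; it gives no fibered structure on $K$ over $P$, so there is no meaningful ``fiber direction'' in $K$ in which to do dimension bookkeeping. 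The paper instead works on the mapping cylinder $Q$ of the width-realizing map $h\colon X\to P$, extends the classifying map skeleton by skeleton into the enlarged target $\widehat{K}=K\cup\bigcup_H \Cone(Z_H)$ -- using the deformation $\varphi_t$ only on the $2$-skeleton, and for higher cells the structural properties of the $Z_H$ established in Proposition~\ref{prop:inj} ($\pi_1$-injectivity, monotonicity $Z_H\subseteq Z_{H'}$, asphericity, and above all contractibility of $K_H\cup\Cone(Z_H)$) -- and then needs a final relative-homology argument (injectivity of $H_m(K^m,K^{m-1};\Z)\to H_m(\widehat{K}^m,\widehat{K}^{m-1};\Z)$, since only cells of dimension at most $m$ are attached) to convert a factorization through $\widehat{K}$ into a deformation of $\Phi$ into $K^{m-1}$, contradicting essentiality. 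None of these ingredients appear in your proposal, and without them the ``assembly'' of the per-fiber deformations into a global homotopy of the classifying map is unproved, so the argument as written does not establish the theorem.
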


A similar result has been established in~\cite[Corollary~4.6]{sab17} for essential closed manifolds~$M$ such that the algebraic entropy of every pair of elements of~$\pi_1(M)$ which do not lie in the same infinite cyclic subgroup is bounded away from zero.
A consequence of this assumption, which is satisfied by closed manifolds admitting a negatively curved Riemannian metric, is that the connected components of the free loop space of the classifying space~$K(\pi_1(M),1)$ are contractible (modulo reparametrization); see~\cite{sab17}.
By isolating the topological properties of the loop space of the classifying space that are really necessary and pushing the argument to the next level, we obtain a more satisfying version in Theorem~\ref{theo:3}, which applies to simplicial complexes and allows their fundamental groups to carry abelian subgroups of rank greater than one, and more generally, nilpotent subgroups.
A more general quantitative version of Theorem~\ref{theo:3} can be found in Theorem~\ref{theo:entvol}. 

\medskip

Along the proof of Theorem~\ref{theo:3}, we establish a curvature-free inequality between the Margulis constant defined in Definition~\ref{def:ell} and the volume of an essential simplicial complex whose loop space of the classifying space is tamed; see Corollary~\ref{coro:VL}.

\medskip

Both Theorem~\ref{theo:omega>0} and Theorem~\ref{theo:3} assert that, under some topological conditions, if the volume of a simplicial complex is small then its volume entropy is large.
The conclusion holds true without assuming that the volume of the whole simplicial complex is small.
More specifically, under the same topological conditions, if the volume of every ball of small radius is small, then the volume entropy is large; see Theorem~\ref{theo:B} and Theorem~\ref{theo:entvol} for precise statements.

\medskip

We emphasize that the results in this article, namely Theorem~\ref{theo:omega=0}, Theorem~\ref{theo:omega>0} and Theorem~\ref{theo:3}, hold for the class of finite simplicial complexes and not solely for closed manifolds.
This contrasts with all previous works, which focused on closed manifolds.
In particular, the topological conditions ensuring the positivity of the minimal volume entropy, see Theorem~\ref{theo:omega>0} and Theorem~\ref{theo:3}, apply to simplicial complexes for which the simplicial volume is zero and the inequality~\eqref{eq:gro} does not readily extend.
This is exemplified by Theorem~\ref{theo:ex}. \\

\emph{Acknowledgment.} The second author would like to thank the Fields Institute and the Department of Mathematics at the University of Toronto for their hospitality while this work was completed. 
We express our gratitude to Rostislav Grigorchuk for multiple stimulating discussions.

\section{Simplicial complexes with zero minimal volume entropy}


In this section, we show that the minimal volume entropy and the simplicial volume of a finite simplicial complex satisfying the fiber $\pi_1$-growth collapsing assumption vanish.
We also give a characterization of the collapsing assumption in terms of open covers.


\subsection{Algebraic entropy of groups}
\mbox { }

\medskip

Let us introduce some classical definitions.

\begin{definition} \label{def:algent}
Let $G$ be a finitely generated group and $S$ be a finite symmetric generating set.
Denote by~$\mathcal{G}_S$ the Cayley graph of~$(G,S)$ endowed with the word distance~$d_S$ induced by~$S$, where every edge of~$\mathcal{G}_S$ is of length~$1$.
The \emph{algebraic entropy of~$G$ with respect to~$S$} is defined as
\[
\ent(G,S) = \ent (\mathcal{G}_S, d_S). 
\]
The group~$G$ has \emph{exponential growth} if $\ent(G,S)$ is positive for some (and so any) finite generating set~$S$.
It has \emph{subexponential growth} otherwise.
The \emph{algebraic entropy of~$G$} is defined as
\[
\ent(G) = \inf_S \ent(G,S)
\]
where $S$ runs over all finite symmetric generating sets of~$G$.
The group~$G$ has \emph{uniform exponential growth} at least~$h>0$ if $\ent(G) \geq h$.
\end{definition}

We will also need the following definition.

\begin{definition} \label{def:entN}
Let $X$ be a connected finite simplicial complex with a piecewise Riemannian metric and $A$ be a subcomplex of~$X$ with basepoint~$a$.
Consider the number
\[
\mathcal{N}(A \subseteq X; t) = {\rm card} \! \left\{ [\gamma] \in \pi_1(A,a) \mid \gamma \subseteq A \mbox{ and } \length(\gamma) \leq t \right\}
\]
of homotopy classes of loops of~$A$ based at~$a$ of length at most~$t$.
When $A=X$, we simply write~$\mathcal{N}(X;t)$.
\end{definition}

Recall the following classical result.

\begin{proposition} \label{prop:entN}
With the previous notations,
\[
\ent(X) = \lim_{t \to \infty} \frac{1}{t} \log \mathcal{N}(X; t).
\]
\end{proposition}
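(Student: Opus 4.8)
The plan is to compare the growth function $\mathcal{N}(X;t)$ of the fundamental group, measured with lengths coming from the piecewise Riemannian metric~$g$, with the combinatorial growth function of a Cayley graph of $\pi_1(X)$ with respect to a suitable finite symmetric generating set, and to check that the resulting two-sided comparison of growth functions does not affect the exponential growth rate. First I would fix a point $a$ in the $1$-skeleton (or a vertex), pick a finite set of loops based at $a$ generating $\pi_1(X,a)$ — for instance a generating set coming from a spanning tree of the $1$-skeleton together with the remaining edges, or loops dual to a finite presentation — and let $S$ be the corresponding finite symmetric generating set, $L = \max_{s \in S} \length(s)$ the maximal $g$-length of the chosen loops. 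Then any word of length $n$ in $S$ gives a loop of $g$-length at most $Ln$, so that the number of elements of $\pi_1(X)$ of word length $\le n$ is at most $\mathcal{N}(X; Ln)$; passing to exponential rates this yields $\ent(X,S) \le \lim_{t\to\infty}\frac1t\log\mathcal{N}(X;t)$ (the latter limit exists by submultiplicativity of $\mathcal{N}$, which I would note is immediate from concatenation of loops). For the reverse inequality, I would use a compactness/Lebesgue-number argument on the finite complex $X$: there is a constant $c>0$ such that every loop based at $a$ of $g$-length $\le t$ is homotopic rel~$a$ to an edge-loop in the $1$-skeleton traversing at most $Ct$ edges (subdivide the loop into short subpaths and homotope each into the $1$-skeleton using local contractibility of $X$). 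Since each edge-loop of combinatorial length $\le Ct$ represents an element of word length $\le C't$ in $S$, we get $\mathcal{N}(X;t) \le \#\{g \in \pi_1(X) : |g|_S \le C't\}$, hence $\lim_{t\to\infty}\frac1t\log\mathcal{N}(X;t) \le \ent(X,S)$.

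Combining the two inequalities gives $\ent(X,S) = \lim_{t\to\infty}\frac1t\log\mathcal{N}(X;t)$ for this particular generating set. To conclude I would invoke the standard fact — already implicit in Definition~\ref{def:algent} and the surrounding discussion — that for a connected finite simplicial complex the expression $\lim_{t\to\infty}\frac1t\log\mathcal{N}(X;t)$ is independent of the choice of basepoint and of the piecewise Riemannian metric (any two piecewise Riemannian metrics on a finite complex are bi-Lipschitz, which only rescales $t$ by a constant and hence does not change the exponential rate), and that $\ent(X) = \inf_S \ent(X,S)$ is in fact realized up to the same kind of bi-Lipschitz rescaling by the geometric count; more precisely, the same subdivision argument applied to an arbitrary finite generating set shows every $\ent(X,S')$ differs from the geometric rate only through multiplicative constants, but since both sides are genuine exponential rates the constants wash out and one gets equality with $\ent(X) = \inf_{S'} \ent(X,S')$ directly. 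Alternatively, and more cleanly, I would identify $\mathcal{N}(X;t)$ up to bi-Lipschitz reparametrization of $t$ with the ball-counting function of the orbit $\pi_1(X)\cdot \tilde a$ in the universal cover $\widetilde X$ with the lifted metric, and recall that the \v{S}varc–Milnor lemma identifies the exponential growth rate of this orbit count with $\ent(\mathcal{G}_S, d_S)$ for any finite generating set $S$, the rate being independent of $S$; this is exactly $\ent(X)$.

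The main obstacle I anticipate is the reverse inequality, i.e.\ producing the linear bound "$g$-length $\le t$ implies homotopic rel basepoint to an edge-loop of combinatorial length $\lesssim t$." This requires a uniform local contractibility / Lebesgue-number estimate: one must choose a scale $\rho>0$ below which every loop is contained in a contractible (star-shaped) neighborhood of a simplex and can be pushed into the $1$-skeleton, then chop a length-$t$ loop into $\sim t/\rho$ pieces of length $\le \rho$, homotope each piece into the $1$-skeleton through these neighborhoods, and control the total number of edges traversed. Getting the bookkeeping right — in particular ensuring the intermediate homotopies fix the basepoint and that the combinatorial length grows only linearly in $t$ — is the technical heart of the argument, but it is a routine (if slightly tedious) application of the finiteness and local contractibility of a simplicial complex. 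Everything else (submultiplicativity, existence of the limit, basepoint- and metric-independence) is standard.
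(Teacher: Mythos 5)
There is a genuine gap, and it starts with a misreading of the statement. In Proposition~\ref{prop:entN}, $\ent(X)$ denotes the \emph{volume entropy} of $(X,g)$ from~\eqref{eq:entropy.comp.space}, i.e.\ the exponential growth rate of $\vol \widetilde B(R)$ in the universal cover of the \emph{given} piecewise Riemannian metric; it is not the algebraic entropy $\inf_S \ent(\pi_1(X),S)$ of the fundamental group. Both sides of the asserted equality depend on $g$ (rescaling $g$ by $\lambda^2$ divides both by $\lambda$ while leaving $\pi_1(X)$ and its Cayley graphs unchanged), so no metric-independent group-theoretic quantity can equal them. Your argument compares $\mathcal{N}(X;t)$ with word-length balls and yields bounds of the form $\#B_S(t/L)\le \mathcal{N}(X;t)\le \#B_S(C't)$. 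This does \emph{not} pin down the exponential rate: replacing $t$ by $C't$ multiplies $\lim\frac1t\log(\cdot)$ by $C'$, so the constants emphatically do not ``wash out''. The same error underlies your claims that bi-Lipschitz changes of metric, or changes of generating set via \v{S}varc--Milnor, leave the exponential growth rate unchanged: quasi-isometries preserve only the positivity of the rate, not its value (this is precisely why the paper distinguishes $\ent(G,S)$ from $\ent(G)=\inf_S\ent(G,S)$ and why uniform exponential growth is a nontrivial strengthening of exponential growth). What your argument actually establishes is only the qualitative observation already stated after~\eqref{eq:entropy.comp.space}, namely that the counting rate is positive if and only if $\pi_1(X)$ has exponential growth.

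The proof the paper points to (\cite[Lemma~2.3]{sab}) never passes through a Cayley graph. One identifies $\mathcal{N}(X;t)$ with the orbit-counting function $\#\{\gamma\in\pi_1(X)\mid d_{\widetilde X}(\tilde a,\gamma\cdot\tilde a)\le t\}$ and compares it with $\vol\widetilde B(\tilde a,R)$ by means of a fundamental domain $F\ni\tilde a$ of diameter at most $D=\diam(X,g)$: the translates $\gamma F$ with $d(\tilde a,\gamma\cdot\tilde a)\le R$ have disjoint interiors and lie in $\widetilde B(R+D)$, while $\widetilde B(R)$ is covered by the translates $\gamma F$ with $d(\tilde a,\gamma\cdot\tilde a)\le R+D$. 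Hence $\mathcal{N}(X;R)\,\vol(X,g)\le\vol\widetilde B(R+D)$ and $\vol\widetilde B(R)\le\mathcal{N}(X;R+D)\,\vol(X,g)$. The corrections here are \emph{additive} in the radius, and additive shifts genuinely do not affect $\lim\frac1R\log(\cdot)$; this is what gives exact equality of the two rates. Your machinery for homotoping loops into the $1$-skeleton is not needed and cannot be repaired to yield more than a two-sided multiplicative comparison.
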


A proof of this result can be found in~\cite[Lemma~2.3]{sab} for instance.
\subsection{Connected and non-connected fibers}

\mbox { }

\medskip

The following result shows that we can assume that the fibers of the simplicial map $\pi:X \to P$ in the definition of the collapsing and non-collapsing conditions are connected.

\begin{proposition} \label{prop:connected}
Let $\pi:X \to P$ be a simplicial map between two finite simplicial complexes.
Denote by~$k$ the dimension of~$P$.
Then there exists a surjective simplicial map $\bar{\pi}:X \to \bar{P}$ to a finite simplicial complex~$\bar{P}$ of dimension at most~$k$ such that the fibers of $\bar{\pi}:X \to \bar{P}$ agree with the connected components of the fibers of $\pi:X \to P$.
\end{proposition}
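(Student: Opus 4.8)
The plan is to construct $\bar P$ as a quotient of a subdivision of $X$ that records, for each simplex, which connected component of a fiber it lies in. First I would pass to the second barycentric subdivision $X''$ of $X$ (and the induced subdivision of $P$), so that the simplicial map $\pi$ remains simplicial and, more importantly, so that for each point $p \in P$ the fiber $\pi^{-1}(p)$ is a subcomplex and its connected components are full subcomplexes; working with a fine enough subdivision also guarantees that nearby fibers over the open star of a vertex deformation retract onto the fiber over that vertex, which is what lets the component structure vary ``continuously''. The key combinatorial object is the following equivalence relation on the vertices of $X''$: declare $v \sim w$ if $\pi(v) = \pi(w) =: p$ and $v, w$ lie in the same connected component of $\pi^{-1}(p)$. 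I would then let $\bar P$ be the simplicial complex whose vertices are the $\sim$-classes, with a collection of classes spanning a simplex precisely when they contain vertices spanning a simplex of $X''$; the map $\bar\pi : X'' \to \bar P$ sends a vertex to its class and is simplicial by construction and surjective.

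Next I would check that $\bar\pi$ factors $\pi$, i.e.\ there is a simplicial map $q : \bar P \to P''$ with $\pi = q \circ \bar\pi$: this is immediate since $v \sim w$ forces $\pi(v) = \pi(w)$, so $\pi$ descends to the quotient of vertices, and it is simplicial because $\pi$ was. In particular $\dim \bar P \le \dim X''$; to get the sharper bound $\dim \bar P \le k = \dim P$ I would argue that any simplex of $\bar P$ is the image of a simplex of $X''$ that is contained in a single fiber of $q \circ \bar\pi = \pi$ restricted suitably — more precisely, a simplex $\sigma$ of $X''$ maps to a simplex of $\bar P$ of the same dimension only if no two of its vertices are $\sim$-equivalent, and I claim such a $\sigma$ satisfies $\dim \pi(\sigma) = \dim \sigma$. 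Indeed if two vertices $v,w$ of $\sigma$ had $\pi(v) = \pi(w) = p$, then since $\sigma \subseteq \pi^{-1}(\mathrm{st}(p))$ and (after enough subdivision) $\pi^{-1}(\mathrm{st}(p))$ deformation retracts to $\pi^{-1}(p)$ compatibly with components, $v$ and $w$ would lie in the same component of $\pi^{-1}(p)$, contradicting $v \not\sim w$. Hence $\pi|_\sigma$ is injective on vertices, so $\dim \bar\pi(\sigma) \le \dim \pi(\sigma) \le k$, giving $\dim \bar P \le k$.

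Finally I would verify the fiber identification: for $\bar p \in \bar P$, the fiber $\bar\pi^{-1}(\bar p)$ should coincide with a connected component of $\pi^{-1}(q(\bar p))$. One inclusion is clear — every vertex of $X''$ mapping to $\bar p$ lies in the designated component $C$ of $\pi^{-1}(p)$ where $p = q(\bar p)$, and a simplex of $X''$ mapping into the single vertex $\bar p$ has all its vertices in $C$ and hence (by fullness of $C$ in $X''$) lies in $C$; conversely every simplex of $C$ has all vertices $\sim$-equivalent, so maps to $\bar p$. Thus $\bar\pi^{-1}(\bar p) = C$ as subcomplexes of $X''$, which is exactly the claim once one identifies $X''$ with $X$ via the subdivision homeomorphism. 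The main obstacle, and the step deserving the most care, is the claim in the second paragraph that after a suitable subdivision the connected components of $\pi^{-1}(p)$ ``stay coherent'' as $p$ ranges over a simplex of $P$ — i.e.\ that $\sim$ is genuinely an equivalence relation whose classes span a bona fide simplicial complex of the right dimension; this is where one must invoke that over (the barycentric subdivision and) the open star of a vertex the preimage deformation retracts onto the central fiber, so that components do not ``split or merge'' within a simplex. Everything else is bookkeeping with subdivisions and quotient complexes.
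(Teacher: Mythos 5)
Your overall strategy (quotient $X$ by the partition of $X$ into connected components of fibers) is the same as the paper's, but your combinatorial implementation has a genuine gap exactly where you defer to ``bookkeeping''. You verify the fiber identification only over \emph{vertices} of $\bar P$, while the proposition concerns all fibers. Over non-vertex points your quotient can glue distinct fiber components together: take $X$ the $4$-cycle with vertices $a_0,a_1,b_1,b_0$ and $P=[p_0,p_1]$, with $\pi(a_i)=\pi(b_i)=p_i$. The fiber over each vertex $p_i$ is the connected edge $[a_i,b_i]$, so $a_0\sim b_0$ and $a_1\sim b_1$, and without subdivision your recipe returns $\bar P=$ a single edge, whereas the fiber of $\pi$ over an interior point has two components; so $\bar\pi^{-1}$ of an interior point is not a fiber component. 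Your second barycentric subdivision happens to repair this example, but you give no argument that it always does, and the coherence principle you invoke is false in the direction you need: the deformation retraction of $\pi^{-1}(\mathrm{st}(p))$ onto $\pi^{-1}(p)$ does merge components (that is precisely what happens above), whereas what you must prove is the converse propagation, namely that vertexwise $\sim$-equivalence of two simplices of $X''$ forces their interiors to lie in the same component of the fibers over interior points. That requires the local product structure of a simplicial map over the open simplices of the original $P$, together with the fact that each simplex of the subdivision has a vertex in the open face carrying its interior; none of this appears in the proposal. (Your dimension claim, by contrast, needs no retraction at all: two vertices of a common simplex with the same $\pi$-image are joined by an edge that $\pi$ collapses, hence already lie in the same fiber component.)

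There is also a setup error: barycentrically subdividing both $X$ and $P$ does not keep the given map simplicial when $\pi$ collapses simplices (the image of a barycenter need not be a barycenter, e.g.\ a triangle collapsed onto an edge), and fibers over non-vertex points are never subcomplexes; one must instead choose a subdivision of $X$ adapted to a chosen subdivision of $P$. The paper sidesteps all of this by taking the topological quotient $\bar P = X/\!\sim$ directly: within each simplex $\sigma$ of $X$ the fibers of the affine map $\pi|_\sigma$ are convex, hence each lies in a single component, so the quotient map restricted to $\sigma$ is just $\pi|_\sigma$ onto a copy of $\pi(\sigma)$. Consequently $\bar P$ is a complex of dimension at most $k$, $\bar\pi$ is simplicial (after a harmless subdivision if one insists on a genuine simplicial complex rather than simplices glued along faces), and the fibers of $\bar\pi$ are the fiber components by the very definition of the quotient, with no case analysis at interior points needed.
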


\begin{proof}
Without loss of generality, we can assume that the simplicial map $\pi:M \to P$ is onto.
Define $\bar{P} = X / \! \sim$ as the quotient space of~$X$, where $x \sim y$ if $x$ and~$y$ lie in the same connected component of a fiber of~$\pi:X \to P$.
Since the map $\pi:X \to P$ is simplicial, the quotient space~$\bar{P}$ is a simplicial complex of the same dimension as~$P$.
By construction, the map $\pi:X \to P$ factors out through a simplical map $\bar{\pi}:X \to \bar{P}$ whose fibers agree with the connected components of the fibers of~$\pi$.
\end{proof}

\subsection{Collapsing assumption and zero minimal volume entropy}
\mbox { }

\medskip

In this section, we show the following result.

\begin{theorem} \label{theo:A}
Every connected finite simplicial complex~$X$ satisfying the collapsing assumption has zero minimal volume entropy.
\end{theorem}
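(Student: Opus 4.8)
The plan is to use the simplicial map $\pi : X \to P$ of dimension $k \le m-1$ provided by the collapsing assumption to build, for every $\varepsilon > 0$, a piecewise Riemannian metric $g_\varepsilon$ on $X$ with $\ent(X, g_\varepsilon) \cdot \vol(X, g_\varepsilon)^{1/m}$ arbitrarily small. By Proposition~\ref{prop:connected} we may assume the fibers of $\pi$ are connected, so each fiber $F_p = \pi^{-1}(p)$ has $i_*[\pi_1(F_p)]$ of subexponential growth in $\pi_1(X)$. First I would fix a piecewise Riemannian background metric $g_0$ on $X$ and, for a small parameter $\varepsilon$, rescale $X$ so that the base directions (transverse to the fibers) are shrunk: concretely, pull back a fixed metric on $P$ via $\pi$, multiply it by $\varepsilon^2$, and add a thin metric on $X$ keeping the fiber directions of fixed size, obtaining $g_\varepsilon$. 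Then $\vol(X, g_\varepsilon) \le C \, \varepsilon^{m-k} \le C\varepsilon$ since the base contributes $k \le m-1$ rescaled dimensions, so the volume goes to $0$ linearly in $\varepsilon$ (up to a constant).

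The heart of the argument is to show that $\ent(X, g_\varepsilon)$ stays bounded, i.e. does not blow up as $\varepsilon \to 0$, so that the product still tends to $0$. For this I would use Proposition~\ref{prop:entN}, which identifies $\ent(X, g_\varepsilon)$ with the exponential growth rate of $\mathcal{N}(X; t)$, the number of homotopy classes of based loops of $g_\varepsilon$-length at most $t$. Any such loop, being short in the $g_\varepsilon$-metric, can be decomposed: it spends most of its length inside fibers (whose metric is not shrunk) and only a controlled amount crossing between adjacent simplices of $P$; since $P$ has bounded combinatorics and the transverse directions are $\varepsilon$-small, a loop of length $t$ can be homotoped to a concatenation of at most $\sim t$ pieces, each lying in a single fiber component, glued along a path in the $1$-skeleton of the subdivided complex. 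Counting: the number of combinatorial ``itineraries'' in $P$ of bounded length grows at most exponentially with a rate independent of $\varepsilon$, while the number of homotopy classes realized within each fiber component $F_p$ of length $\le t$ grows subexponentially in $t$ because $i_*[\pi_1(F_p)]$ has subexponential growth (and there are finitely many fibers up to the finitely many vertices of $P$, hence a uniform subexponential bound). Multiplying a uniformly-bounded-exponential count of itineraries by a subexponential count per fiber yields $\ent(X, g_\varepsilon) \le h_0$ for a constant $h_0$ independent of $\varepsilon$. Hence $\omega(X) \le \ent(X,g_\varepsilon)\,\vol(X,g_\varepsilon)^{1/m} \le h_0 (C\varepsilon)^{1/m} \to 0$, so $\omega(X) = 0$.

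The main obstacle is the loop-counting step: one must turn the geometric fact that a short loop in $g_\varepsilon$ is ``mostly in the fibers'' into a clean combinatorial decomposition, and control the bookkeeping of basepoints and of how fiber pieces in different components are concatenated. The subtlety is that $\pi_1(F_p)$ injecting or not into $\pi_1(X)$ does not matter for the \emph{upper} bound on $\mathcal{N}$ — what matters is that the image $i_*[\pi_1(F_p)]$, through which every fiber loop is counted, has subexponential growth, and that this holds \emph{uniformly} over the finitely many relevant fibers. I would handle the concatenation by working with a single subdivision of $X$ adapted to $\pi$, so that ``crossing'' a codimension-one face of $P$ costs at least a definite amount of $g_\varepsilon$-length ($\sim \varepsilon$ up to the choice of normalization, but in the rescaled picture one instead keeps fiber size $1$ and lets only the base be of size $\varepsilon$, so crossings are cheap and the number of crossings in a loop of length $t$ is bounded by a constant times $t/\varepsilon$ — requiring care, so a better route is to keep the transverse size bounded below and instead exploit that the transverse contribution to volume is what shrinks). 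A cleaner alternative I would pursue is to not rescale at all but to replace $X$ up to homotopy by a complex collapsed along the fibers of $\pi$, apply a deformation supported near the fibers, and directly estimate entropy and volume on the collapsed model; this is essentially Gromov's collapsing construction, and the finiteness plus uniform subexponential growth hypotheses are exactly what make the entropy term stay bounded while the volume term vanishes.
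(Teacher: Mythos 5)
Your high-level plan is the same as the paper's: build metrics adapted to $\pi:X\to P$ (with connected fibers via Proposition~\ref{prop:connected}) whose volume tends to $0$ while the entropy stays bounded, and bound $\mathcal{N}(X;T)$ via Proposition~\ref{prop:entN} by decomposing a loop into pieces lying in fibers over vertices and a crossing ``itinerary'', using the subexponential growth of $i_*[\pi_1(F_p)]$ uniformly over the finitely many vertices of~$P$. But the quantitative core is garbled, and as written the argument does not close. You scale the metric in the wrong direction: the paper takes $g_t=\pi^*(h_P)+t^2h_X$, keeping the base directions of size at least $1$ (so every long edge costs at least $1$ and the itinerary count has rate $\log n_e$ independent of $t$) and shrinking the \emph{fiber} directions, which is what makes $\vol(X,g_t)\to 0$, since $\dim P\le m-1$ forces at least one shrunken direction in every $m$-simplex. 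With your normalization (base of size $\varepsilon$, fibers of fixed size) the two estimates you need are incompatible: a loop of length $T$ can cross long edges about $T/\varepsilon$ times, so your itinerary count only gives $\ent(X,g_\varepsilon)\lesssim \log(n_e)/\varepsilon$ (your claim that the itinerary rate is independent of $\varepsilon$ fails here), while $\vol(X,g_\varepsilon)^{1/m}$ is only $O(\varepsilon^{k/m})$ with $k=\dim P\le m-1$ (not $\varepsilon^{m-k}$ as you write --- that exponent belongs to the opposite scaling), so the bound you obtain on $\ent\cdot\vol^{1/m}$ is of order $\varepsilon^{k/m-1}$, which blows up rather than vanishes. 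Worse, if you literally add a fixed-size metric $h_X$ everywhere, or if some $m$-simplex of $X$ is collapsed by $\pi$ to a vertex (which the collapsing assumption does not exclude), the volume does not tend to $0$ at all. Your parenthetical fix, ``keep the transverse size bounded below and exploit that the transverse contribution to volume is what shrinks'', is self-contradictory: it is the fiber contribution that must shrink.

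Once the scaling is corrected, there is a second, genuinely missing idea. In $g_t$ the fiber pieces $\beta_i$ are short, so a loop of $g_t$-length $T$ may contain fiber paths of $h_X$-length of order $T/t$; subexponential growth only gives, for each fixed $\varepsilon>0$ and large arguments, a bound of the form $\exp\bigl(\varepsilon(\ell_i+1)/t\bigr)$, and to keep the entropy bounded uniformly in $t$ one must let $\varepsilon$ depend on $t$ --- the paper takes $\varepsilon=t^2$ and gets $\eent(X,g_t)\le\log n_e+2t$. Your counting step treats the fiber count as subexponential in the loop length, which is only true in the normalization where the fibers have unit size, i.e.\ exactly the one in which the crossing count explodes; you never confront the regime where fiber loops have length of order $t$. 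This interplay (short fibers, unit-length crossings, $t$-dependent subexponential threshold, uniformity over the finitely many vertex fibers) is the heart of the paper's proof and is absent from yours. Finally, the alternative you sketch --- replacing $X$ by a homotopy-equivalent collapsed model --- is not available for free: homotopy invariance of $\omega$ is only known for closed manifolds, not for finite simplicial complexes, so that route would require a separate justification.
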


\begin{proof}
Consider a simplicial map $\pi:X \to P$ to a finite simplicial $(m-1)$-complex~$P$ as in the collapsing assumption.
By Proposition~\ref{prop:connected}, we can assume that the simplicial map $\pi:X \to P$ is onto and that its fibers~$F_p$ are connected.

Let us introduce a couple of definitions.
An edge of~$X$ is said to be \emph{long} if it is sent to an edge of~$P$ by the simplicial map~$\pi$.
It is said to be \emph{short} otherwise (in which case, it is sent to a vertex of~$P$).
Denote also by~$n_e$ the number of edges of~$X$.

Let $h_X$ and $h_P$ be the piecewise flat simplicial metrics on~$X$ and~$P$, where each simplex is isometric to the standard simplex of the same dimension.
The pullback form~$\pi^*(h_P)$ on~$X$ is a nonnegative symmetric bilinear form which is degenerate everywhere.
Define the simplicial metric 
\[
g_t=\pi^*(h_P) + t^2 h_X
\]
on~$X$ with $t>0$.
Clearly,
\[
\lim_{t \to 0} \vol(X,g_t) = 0.
\]

Let us show that the volume entropy of~$(X,g_t)$ is uniformly bounded from above for~$t >0$ small enough.
First, observe that there exists a constant~$C$ (which does not depend on~$t$) such that
\[
\diam(F_p,h_X) < C
\]
for every (connected) fiber~$F_p=\pi^{-1}(p)$ of~$\pi$.
Here, the diameter is measured with respect to the length structure induced by~$h_X$ on~$F_p$.
Hence,
\[
\diam(F_p,g_t) = t \cdot \diam(F_p,h_X) \xrightarrow[t \to 0]{} 0.
\]
Thus, by taking $t$ small enough, we can assume that $\diam(F_p,g_t) < \frac{1}{2}$ for every vertex $p \in P$.

Let us estimate the number of homotopy classes of edge-loops in~$X$ of \mbox{$g_t$-length} at most~$T$.
Every edge-loop~$\gamma$ in~$X$ of $g_t$-length at most~$T$ decomposes as
\[
\gamma = \alpha_1 \cup \beta_1 \cup \alpha_2 \cup \cdots \cup \beta_k
\]
where $\alpha_i$ is a long edge of~$X$ and $\beta_i$ is an edge-path lying in a (connected) fiber $F_i=\pi^{-1}(p_i)$ of~$\pi$ over a vertex~$p_i \in P$, which joins the terminal endpoint of~$\alpha_i$ to the initial endpoint of~$\alpha_{i+1}$.

Fix a basepoint $a_i \in F_i$.
Denote by~$\ell_i$ the $g_t$-length of~$\beta_i$.
Let~$\bar{\beta}_i$ be the loop of~$F_i$ based at~$a_i$ obtained by connecting the endpoints $x_i$ and~$y_i$ of~$\beta_i$ to the basepoint~$a_i$ along two paths of~$F_i$ of $g_t$-length at most $\diam(F_i,g_t) < \frac{1}{2}$.
The number $\mathcal{N}_{{x_i},{y_i}}(F_i \subseteq (X,g_t);\ell_i)$ of homotopy classes in~$X$ of edge-paths in~$F_i$ with endpoints $x_i$ and~$y_i$, and $g_t$-length at most~$\ell_i$ is bounded by the number of homotopy classes in~$X$ of loops in~$F_i$ based at~$a_i$ of $g_t$-length at most $\ell_i + 2 \, \diam(F_i,g_t)$.
Thus, 
\begin{align}
\mathcal{N}_{{x_i},{y_i}}(F_i \subseteq (X,g_t);\ell_i) & \leq \mathcal{N}(F_i \subseteq (X,g_t); \ell_i + 2 \, \diam(F_i,g_t)) \nonumber  \\
 & \leq \mathcal{N} \left( F_i \subseteq (X,h_X); \frac{\ell_i + 1}{t} \right) \label{eq:N1}
\end{align}
since $t^2 h_X \leq g_t$, where we refer to Definition~\ref{def:entN} for the definition of~$\mathcal{N}$.

By assumption, the subgroups $i_*[\pi_1(F_p)] \leqslant \pi_1(X)$ have subexponential growth.
Thus, for every $\varepsilon > 0$, there exists $T_\e > 0$ such that for every $T \geq T_\e$ and every vertex~$p \in P$,
\begin{equation} \label{eq:N2}
\mathcal{N}(F_p \subseteq (X,h_X);T) \leq \exp \left( \varepsilon \, T \right).
\end{equation}
We can choose $\varepsilon$ to depend on~$t$.
Fix $\varepsilon = t^2$.

It follows from~\eqref{eq:N1} and~\eqref{eq:N2} that the number of possible homotopy classes in~$X$ induced by the edge-path~$\beta_i$ is at most
\[
\mathcal{N}_{{x_i},{y_i}}(F_i \subseteq (X,g_t);\ell_i) \leq \exp \left( \varepsilon \frac{\ell_i +1}{t} \right)
\]
where $\ell_i$ is the $g_t$-length of~$\beta_i$.

Now, there are at most $n_e$ possible choices for each long edge~$\alpha_i$.
Hence, the number of homotopy classes of edge-loops in~$X$ of $g_t$-length at most \mbox{$T > T_0$} is bounded by
\[
n_e^k \, \prod_{i=1}^k \exp \left( \varepsilon \, \frac{\ell_i +1}{t} \right) \leq n_e^k \, \exp \left( {\frac{\varepsilon}{t} \, \sum_{i=1}^k \ell_i} \right) \, \exp \left( k \frac{\varepsilon}{t} \right).
\]
Clearly, $\sum_{i=1}^k \ell_i \leq T$.
Moreover, since every edge~$\alpha_i$ is of $g_t$-length at least~$1$, we have $k \leq T$.
Therefore, the number of homotopy classes of edge-loops in~$X$ of $g_t$-length at most~$T$ is bounded by
\[
n_e^T \, \exp \left( 2 \frac{\varepsilon}{t} T \right).
\]
Taking the log, dividing by~$T$ and letting~$T$ go to infinity, we obtain
\[
\eent(X,g_t) \leq \log n_e + 2t.
\]
This shows that $\eent(X,g_t)$ is uniformly bounded for $t$ close to zero as desired.
\end{proof}

Let us recall the definition of an $F$-structure, first introduced by Cheeger-Gromov~\cite{ChG} in a different context.

\begin{definition} \label{def:F-structure}
A closed manifold~$M$ admits an \emph{$F$-structure} if there are a locally finite open cover~$\{U_i\}$ of~$M$, finite normal coverings~$\pi_i:\tilde{U}_i \to U_i$ and effective smooth actions of tori~$\T^{k_i}$ on~$\tilde{U}_i$ which extend the deck transformations such that if $U_i$ and~$U_j$ intersect each other, then $\pi_i^{-1}(U_i \cap U_j)$ and $\pi_j^{-1}(U_i \cap U_j)$ have a common covering space on which the lifting actions of~$\T^{k_i}$ and~$\T^{k_j}$ commute.
\end{definition}

\forget
\begin{definition} \label{def:F-structure}
A closed manifold~$M$ admits an \emph{$F$-structure} if the following conditions are satisfied:
\begin{itemize}
\item there is a locally finite open cover~$\{U_i\}$ of~$M$ with finite normal coverings~$\pi_i:\tilde{U}_i \to U_i$;
\item there is an effective smooth action of a torus~$\T^{k_i}$ on~$\tilde{U}_i$ which extends the deck transformations;
\item if $U_i$ and~$U_j$ intersect each other, then $\pi_i^{-1}(U_i \cap U_j)$ and $\pi_j^{-1}(U_i \cap U_j)$ have a common covering space on which the lifting actions of~$\T^{k_i}$ and~$\T^{k_j}$ commute.
\end{itemize}
\end{definition}
\forgotten

As an application of Theorem~\ref{theo:A}, we derive the following result, which is also a consequence of~\cite[Theorem~A]{PP03}.

\begin{corollary}
Every closed manifold admitting an $F$-structure (of possibly zero rank) has zero minimal volume entropy.
\end{corollary}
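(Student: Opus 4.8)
The plan is to deduce the corollary directly from Theorem~\ref{theo:A} by verifying that a closed manifold~$M$ admitting an $F$-structure satisfies the fiber $\pi_1$-growth collapsing assumption. So the task reduces to producing a simplicial map $\pi:X \to P$ onto a simplicial complex of dimension at most~$m-1$ (after choosing a suitable triangulation $X$ of $M$) whose fibers have connected components with subexponentially growing $\pi_1$-image in $\pi_1(M)$.

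First I would recall the structure theory of $F$-structures: an $F$-structure determines a partition of $M$ into orbits of the (locally defined, partially overlapping) torus actions, and the orbits are tori of positive dimension at every point when the $F$-structure has positive rank, or at least they organize $M$ into a decomposition by "polarized" pieces; more usefully, Cheeger--Gromov's theory gives that $M$ is the total space of a (singular) fibration-like decomposition whose leaves are infranil — in fact torus — orbits. The key point is that the leaf space, or an appropriate quotient, has dimension strictly less than $m$ because each leaf has dimension at least~$1$. I would make this precise by passing to the quotient map $q:M \to M/\!\!\sim$ collapsing each orbit-closure to a point; $M/\!\!\sim$ is a compact space of covering dimension at most $m-1$, and after triangulating $M$ finely and simplicially approximating, one obtains a simplicial map $\pi:X \to P$ with $\dim P \le m-1$ whose fibers are (unions of) orbit-closures. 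Using Proposition~\ref{prop:connected} we may then assume the fibers are connected.

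Next I would check the group-growth condition. Each connected fiber $F_p$ is contained in a single chart $U_i$ (or a bounded union of charts), and by the defining property of an $F$-structure the relevant piece of $\tilde U_i$ carries an effective $\T^{k_i}$-action extending the deck action; the fiber is then covered by (or is itself) an orbit, hence its fundamental group image $i_*[\pi_1(F_p)]$ in $\pi_1(M)$ is a quotient of a virtually abelian group — the fundamental group of a torus orbit together with the finite deck group — and virtually abelian groups have polynomial, hence subexponential, growth. Since the cover $\{U_i\}$ is finite (by compactness) there are only finitely many ranks $k_i$ involved, so the bound on growth is uniform, and in any case Theorem~\ref{theo:A} only requires subexponential growth fiberwise, not uniformity. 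This verifies the collapsing assumption and Theorem~\ref{theo:A} yields $\omega(M)=0$.

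The main obstacle is the passage from the smooth/topological $F$-structure decomposition to a \emph{simplicial} map onto a complex of dimension $\le m-1$: one must ensure both that the quotient genuinely drops dimension (which uses that orbits are positive-dimensional where the structure is nontrivial, and that the zero-rank locus can be absorbed because a zero-rank $F$-structure still decomposes $M$ into pieces collapsible in the requisite sense, or handled by a standard nerve/cover argument), and that simplicial approximation can be carried out without merging fibers that should stay separate. I would handle this either by invoking the known fact (Cheeger--Gromov) that $M$ admits metrics of arbitrarily small volume with bounded diameter and curvature, and then extracting the needed combinatorial collapsing data, or — more in the spirit of the present paper — by translating the $F$-structure into an open cover of $M$ with the appropriate multiplicity and fiberwise-amenable-$\pi_1$ property and then applying the cover formulation of the collapsing assumption promised in Proposition~\ref{prop:cover}. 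Either route is routine given the literature, so I would keep this part brief and cite~\cite{ChG} and~\cite{PP03} for the decomposition facts.
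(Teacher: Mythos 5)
Your overall strategy is the same as the paper's: verify the fiber $\pi_1$-growth collapsing assumption using the orbit decomposition of the $F$-structure (orbits are covered by tori, so the $\pi_1$-images of the fibers are virtually abelian and hence of subexponential growth), push $M$ down to the orbit space of dimension at most $m-1$, and apply Theorem~\ref{theo:A}. The $\pi_1$-growth verification is fine, and routing the simplicial-approximation issue through the open-cover formulation (Proposition~\ref{prop:cover}) is consistent with the paper's toolbox.

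However, there is a genuine gap exactly at the point the corollary emphasizes with ``(of possibly zero rank)''. Your justification that the quotient drops dimension rests on ``each leaf has dimension at least~$1$'', which fails when the $F$-structure has rank zero, i.e.\ when some orbits are points; a priori the locus of trivial orbits could contribute full-dimensional pieces to the orbit space, and then no map to an $(m-1)$-complex with orbit fibers exists. Your two proposed remedies do not close this: the phrase ``the zero-rank locus can be absorbed'' is not an argument, and the fallback via Cheeger--Gromov collapse (small volume with bounded curvature, hence small entropy) is only available for $F$-structures of \emph{positive} rank, so it cannot prove the zero-rank statement. The missing ingredient, which is how the paper proceeds, is the Slice Theorem (see~\cite[Appendix~B]{GGK}, and also \cite{ChG}, \cite{PP03}): since the torus actions are effective, the trivial orbits form a submanifold of codimension at least one (at least two in the orientable case), so the orbit space is an orbifold of dimension at most $m-1$ even in the zero-rank case; with that in hand, the projection to the orbit space (or a cover by saturated neighborhoods of orbits of multiplicity at most $m$) yields the collapsing assumption and Theorem~\ref{theo:A} applies.
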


\begin{proof}
By the Slice Theorem and its consequences, see~\cite[Appendix~B]{GGK}, we derive the following properties.
The orbits of the $F$-structure of a closed $m$-manifold~$M$ partition the manifold into closed submanifolds covered by tori; see also~\cite{ChG} and~\cite{PP03}.
The trivial orbits form a submanifold of codimension at least one (at least two if the manifold is orientable) and the orbit space is an orbifold of dimension at most~$m-1$.
Since the fibers of the natural projection from~$M$ to the orbit space have almost abelian fundamental groups, the manifold~$M$ satisfies the collapsing assumption and has zero minimal volume entropy by Theorem~\ref{theo:A}.
\end{proof}

\subsection{Collapsing assumption and open covers} \label{sec:zerovol}

\mbox { }

\medskip

In this section, we give a characterization of the collapsing assumption in terms of open covers.
This characterization is similar to the description of the Urysohn width in terms of either fiber diameters or open covers; see Proposition~\ref{prop:UW}.

\begin{definition} \label{def:subexp}
A path-connected open subset~$U$ of a path-connected topological space~$X$ has \emph{subexponential $\pi_1$-growth in~$X$} if the subgroup $\Gamma_U:=i_*[\pi_1(U)]$ of~$\pi_1(X)$ has subexponential growth, where $i:U \hookrightarrow X$ is the inclusion map.
\end{definition}

\begin{proposition} \label{prop:cover}
A connected finite simplicial $m$-complex~$X$ satisfies the collapsing assumption if and only if it admits a cover of multiplicity at most~$m$ by open subsets of subexponential $\pi_1$-growth in~$X$.
\end{proposition}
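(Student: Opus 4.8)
The plan is to establish both implications by passing between simplicial maps to $(m-1)$-complexes and open covers of multiplicity $\le m$, mimicking the classical dictionary between the Urysohn width described via fibers and via covers (Proposition~\ref{prop:UW}).

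\medskip

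\textbf{From the collapsing assumption to a cover.} Suppose $\pi:X \to P$ is a simplicial map to a finite simplicial $(m-1)$-complex, with every connected component $F_p$ of every fiber having subexponential $\pi_1$-growth in $X$; by Proposition~\ref{prop:connected} we may assume the fibers are connected. Passing to the barycentric subdivision of $P$ (and the induced subdivision of $X$, keeping $\pi$ simplicial), cover $P$ by the open stars $\St(v)$ of its vertices: this is an open cover of multiplicity at most $\dim P + 1 \le m$, since the vertices of a simplex of $P$ index exactly the stars containing a given point. Set $U_v = \pi^{-1}(\St(v))$. These form an open cover of $X$ of multiplicity at most $m$. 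The set $\St(v)$ deformation retracts onto $v$, and this retraction can be covered (using that $\pi$ is simplicial and the fibers are connected, so the preimage of the straight-line homotopy in $P$ is realized on $X$) by a deformation of $U_v$ onto the fiber $F_v = \pi^{-1}(v)$; more carefully, one shows $U_v$ deformation retracts onto $F_v$ by pushing along the fibers of $\pi$ over the segments from points of $\St(v)$ to $v$. Hence $i_*[\pi_1(U_v)] = i_*[\pi_1(F_v)]$, which has subexponential growth by hypothesis. (If $U_v$ is disconnected, apply the argument to each component, which retracts onto a component of $F_v$; the multiplicity bound is unaffected since we only refine the cover.) This gives the desired cover of multiplicity $\le m$ by open subsets of subexponential $\pi_1$-growth in $X$.

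\medskip

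\textbf{From a cover to the collapsing assumption.} Conversely, let $\{U_\alpha\}$ be an open cover of $X$ of multiplicity at most $m$ by open sets of subexponential $\pi_1$-growth. After passing to a sufficiently fine iterated barycentric subdivision of $X$, the cover by (closed) stars of vertices refines $\{U_\alpha\}$, i.e.\ each vertex star is contained in some $U_\alpha$. Let $\mathcal{N}$ be the nerve of the star cover; its dimension is at most $m-1$ because the star cover inherits multiplicity $\le m$ from refining $\{U_\alpha\}$ (a point lies in at most $m$ stars). Let $\pi:X \to \mathcal{N} =: P$ be the canonical map to the nerve sending each vertex $v$ of the subdivided $X$ to the vertex of $P$ corresponding to $\St(v)$, extended simplicially; this is a simplicial map to the $(m-1)$-complex $P$. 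A connected component $F_p$ of a fiber $\pi^{-1}(p)$, for $p$ in the open star of a nerve-vertex $[\St(v)]$, lies in $\St(v)$ hence in some $U_\alpha$, so $i_*[\pi_1(F_p)] \le i_*[\pi_1(U_\alpha)]$; a subgroup of a group of subexponential growth has subexponential growth, so $i_*[\pi_1(F_p)]$ has subexponential growth. For $p$ in a higher simplex of $P$ the fiber sits in an intersection of stars, hence still in a single $U_\alpha$, and the same conclusion holds. Thus $X$ satisfies the collapsing assumption.

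\medskip

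\textbf{Main obstacle.} The delicate point is the first implication: identifying $i_*[\pi_1(U_v)]$ with $i_*[\pi_1(F_v)]$ inside $\pi_1(X)$. One must genuinely produce a deformation retraction of $U_v = \pi^{-1}(\St(v))$ onto the fiber $F_v$ compatible with the inclusion into $X$, rather than merely a homotopy equivalence, since we need the images of $\pi_1$ in $\pi_1(X)$ to coincide. This uses that $\pi$ is simplicial — so that $\St(v)$ being a cone with apex $v$ lifts, fiberwise, to a deformation of $U_v$ onto $\pi^{-1}(v)$ — together with Proposition~\ref{prop:connected} to guarantee the fibers are connected (so that the fiberwise push-in lands in a single component and no spurious $\pi_1$ is created or lost). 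Handling possibly disconnected $U_v$ and verifying that the multiplicity bound survives the passage to components and to barycentric subdivisions is routine but must be stated. Everything else — the nerve construction, the subgroup-of-subexponential-is-subexponential observation — is standard.
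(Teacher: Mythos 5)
Your forward implication is essentially the paper's argument and is fine: pull back the open stars of the vertices of~$P$, and use that for a simplicial map the preimage of the open star of a vertex deformation retracts onto the fiber over that vertex (concretely, by rescaling the barycentric coordinates carried by the vertices mapping to~$v$, which is well defined across simplices); passing to connected components handles disconnected preimages, exactly as in the paper.

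The converse, however, has a genuine gap. You take the nerve of the vertex-star cover of a fine subdivision of~$X$ and assert its dimension is at most $m-1$ because this cover ``inherits'' multiplicity at most~$m$ from refining $\{U_\alpha\}$. Multiplicity is not inherited under refinement, and the open (or closed) star cover of a simplicial $m$-complex always has multiplicity $m+1$: a point in the interior of an $m$-simplex lies in the stars of all $m+1$ of its vertices. In fact the nerve of the vertex-star cover is canonically isomorphic to the complex itself, so your $P$ is just a copy of the subdivided~$X$, of dimension~$m$, and your canonical map is essentially the identity; it violates the requirement $\dim P \le m-1$ in the collapsing assumption, so nothing is proved. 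The repair is to map into the nerve of the \emph{original} cover $\{U_\alpha\}$, whose dimension is at most $m-1$ because that cover has multiplicity at most~$m$: after subdividing so that each closed star $\textrm{st}(v)$ lies in some $U_{\alpha(v)}$, the assignment $v \mapsto [U_{\alpha(v)}]$ extends to a simplicial map $\pi:X \to \mathrm{Nerve}(\{U_\alpha\})$ (the stars of the vertices of a simplex all contain that simplex, so the corresponding $U$'s intersect), and every fiber $\pi^{-1}(p)$ lies in a single $U_\alpha$: if $p$ has positive barycentric coordinate at the nerve vertex $[U_\alpha]$, then any $x \in \pi^{-1}(p)$ has positive coordinate at some vertex $v$ with $\alpha(v)=\alpha$, hence $x \in \textrm{st}(v) \subseteq U_\alpha$. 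Then $i_*[\pi_1(F_p)] \le i_*[\pi_1(U_\alpha)]$ has subexponential growth. This is essentially the route the paper takes, except that the paper produces $\pi$ from the partition-of-unity map to the nerve followed by a $C^0$-close simplicial approximation whose barycentric coordinates keep their supports in the~$U_i$, which again forces each fiber into a single~$U_i$.
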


\begin{proof}
Suppose that $X$ satisfies the collapsing assumption.
Then there exists a simplicial map $\pi:X \to P$ to a finite simplicial complex~$P$ of dimension $k \leq m-1$ such that for every connected component~$F_p$ of every fiber~$\pi^{-1}(p)$, where $p$ is a vertex of~$P$, the subgroup $i_*[\pi_1(F_p)]$ of~$\pi_1(X)$ has subexponential growth.
Since $P$ is a finite simplicial complex of dimension~$k$, the open cover formed of the open stars~$\textrm{st}(p) \subseteq P$ of the vertices~$p$ of~$P$ has multiplicity $k+1 \leq m$.
The connected components of the preimages~$\pi^{-1}(\textrm{st}(p)) \subseteq X$ of these open stars form an open cover of~$X$ with the same multiplicity $k+1 \leq m$ as the previous cover of~$P$.
Furthermore, the open subsets of this open cover of~$X$ strongly retract by deformation onto the connected components~$F_p$ of the fibers~$\pi^{-1}(p)$.
In particular, they have subexponential $\pi_1$-growth in~$X$. 
This prove the first implication. 

\medskip

For the converse implication, let $\{ U_i \}_{i=1,\cdots,s}$ be a cover of~$X$ of multiplicity at most~$m$ by open subsets of subexponential $\pi_1$-growth in~$X$.
Take a partition of unity~$\{ \phi_i \}$ of~$X$, where each function~$\phi_i:X \to [0,1]$ has its support in~$U_i$.
Consider the map $\Phi:X \to \Delta^{s-1}$ defined by
\[
\Phi(x) = (\phi_1(x),\cdots,\phi_s(x))
\]
in the barycentric coordinates of~$\Delta^{s-1}$.
The nerve~$P$ of the cover~$\{ U_i \}$ is a simplicial complex with one vertex~$v_i$  for each open set~$U_i$, where $v_{i_0},\cdots,v_{i_k}$ span a $k$-simplex of~$P$ if and only if the intersection~$\cap_{j=1}^k U_{i_j}$ is nonempty.
By construction, the dimension of the nerve~$P$ is one less than the multiplicity of the cover~$\{ U_i \}$.
That is, $\dim P \leq m-1$.
We identify in a natural way the vertices~$\{ v_i \}$ of~$P$ with the vertices of~$\Delta^{s-1}$.
With this identification, the nerve~$P$ of~$X$ lies in~$\Delta^{s-1}$.
Furthermore, the image of~$\Phi$ lies in~$P$.
By~\cite[\S2.C]{hatcher}, subdividing~$X$ and~$P$ if necessary, we can approximate $\Phi:X \to P$ by a simplicial map $\pi:X \to P$ close to~$\Phi$ for the $C^0$-topology, whose normalized barycentric coordinates $\pi_i:X \to [0,1]$ have their support in~$U_i$.
Thus, every fiber~$\pi^{-1}(p)$ lies in one of the open subsets~$U_i$.
Therefore, for every connected component~$F_p$ of~$\pi^{-1}(p)$, the subgroup~$i_*[\pi_1(F_p)]$ lies in some subgroup~$i_*[\pi_1(U_i)]$.
Since the open subsets~$U_i$ have subexponential $\pi_1$-growth in~$X$, the subgroups~$i_*[\pi_1(F_p)]$ have subexponential growth and the simplicial complex~$X$ satisfies the collapsing assumption as required.
\end{proof}

An illustration of the characterization of the collapsing assumption in terms of open covers is given by the following example.

\begin{example}
For $i=1,2$, let $M_i$ be a connected closed manifold of dimension~$m \geq 3$ with fundamental group~$\pi_1(M_i)$ of subexponential growth.
Let $N$ be a connected closed $n$-manifold embedded both in~$M_1$ and~$M_2$.
Suppose that the embedding $N \subseteq M_i$ induces a $\pi_1$-monomorphism and that its normal fiber $\nu_i(N) \subseteq TM_i$ is trivial for $i=1,2$.
Define the $m$-manifold
\[
X = (M_1 \setminus U_1(N)) \mathop{\cup}_{N \times S^{m-n-1}} (M_2 \setminus U_2(U))
\]
where $U_i(N)$ is a small tubular neighborhood of~$N$ in~$M_i$.
Take a small tubular neighborhood of~$M_i \setminus U_i(N)$ in~$X$ for $i=1,2$.
This yields a cover of~$X$ by open subsets with subexponential $\pi_1$-growth in~$X$ of multiplicity two.
According to Proposition~\ref{prop:cover}, the closed $m$-manifold~$X$ satisfies the collapsing assumption.
Note however that the fundamental group of~$X$ has exponential growth in general.
Furthermore, when $N$ is reduced to a singleton, the manifold~$X$ is the connected sum~$M_1 \sharp M_2$ of~$M_1$ and~$M_1$.
This construction provides a number examples of closed essential manifolds with a fundamental group of exponential growth and zero volume entropy. 
\end{example}

Combining Theorem~\ref{theo:A} and Proposition~\ref{prop:cover}, we immediately derive the following result.

\begin{corollary}\label{coro:collapsing}
Every connected finite simplicial $m$-complex~$X$ which admits a cover of multiplicity at most~$m$ by open subsets of subexponential $\pi_1$-growth in~$X$ has zero minimal volume entropy. 
\end{corollary}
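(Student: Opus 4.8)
The statement to prove is Corollary~\ref{coro:collapsing}, which says that a connected finite simplicial $m$-complex admitting a cover of multiplicity at most~$m$ by open subsets of subexponential $\pi_1$-growth has zero minimal volume entropy. The plan is simply to chain together the two results that immediately precede it. By Proposition~\ref{prop:cover}, the hypothesis on the open cover is \emph{equivalent} to the fiber $\pi_1$-growth collapsing assumption for~$X$; that proposition's converse implication produces, starting from the given cover $\{U_i\}$ of multiplicity $\le m$, a simplicial map $\pi:X\to P$ to the nerve of the cover (a complex of dimension $\le m-1$) whose fibers each lie inside some~$U_i$, so that the groups $i_*[\pi_1(F_p)]$ embed into the subexponentially-growing groups $i_*[\pi_1(U_i)]$ and hence have subexponential growth. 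Once the collapsing assumption is in hand, Theorem~\ref{theo:A} (equivalently Theorem~\ref{theo:omega=0}) directly gives $\omega(X)=0$.

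Concretely, the proof I would write is one or two sentences: \emph{Let $X$ be a connected finite simplicial $m$-complex admitting a cover of multiplicity at most~$m$ by open subsets of subexponential $\pi_1$-growth in~$X$. By Proposition~\ref{prop:cover}, $X$ satisfies the fiber $\pi_1$-growth collapsing assumption. Hence, by Theorem~\ref{theo:A}, $\omega(X)=0$.} That is the whole argument; there is nothing left to do because both inputs are already established in the excerpt.

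Since there is essentially no obstacle here — the corollary is a formal consequence stated precisely as ``combining Theorem~\ref{theo:A} and Proposition~\ref{prop:cover}'' — the only thing worth being careful about is making sure the quantifiers on the dimension line up: the cover has multiplicity $\le m$, so its nerve has dimension $\le m-1$, which is exactly what the collapsing assumption and Theorem~\ref{theo:A} require. No genuinely hard step arises. For completeness one could also note that the same conclusion can be reached without passing through the collapsing assumption as an intermediate, by running the metric construction $g_t=\pi^*(h_P)+t^2h_X$ of the proof of Theorem~\ref{theo:A} directly with $\pi$ the nerve map of the cover, but invoking Proposition~\ref{prop:cover} and Theorem~\ref{theo:A} as black boxes is cleaner and is presumably what the authors intend.

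\begin{proof}
Let $X$ be a connected finite simplicial $m$-complex admitting a cover of multiplicity at most~$m$ by open subsets of subexponential $\pi_1$-growth in~$X$. By Proposition~\ref{prop:cover}, the simplicial complex~$X$ satisfies the collapsing assumption. Therefore, by Theorem~\ref{theo:A}, we have $\omega(X)=0$.
\end{proof}
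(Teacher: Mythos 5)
Your proof is correct and is exactly the paper's argument: the authors derive Corollary~\ref{coro:collapsing} by "combining Theorem~\ref{theo:A} and Proposition~\ref{prop:cover}", which is precisely the two-step chain you wrote. Nothing further is needed.
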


\subsection{Gromov's vanishing simplicial volume theorem}

\mbox { }

\medskip

The characterization of the collapsing assumption in terms of covers given in Proposition~\ref{prop:cover} allows us to draw a parallel with the simplicial volume through Gromov's vanishing simplicial volume theorem.

\begin{definition}
A group~$G$ is \emph{amenable} if it admits a finitely-additive left-invariant probability measure.
A path-connected open subset~$U$ of a path-connected topological space~$X$ is \emph{amenable in~$X$} if $i_*[\pi_1(U)]$ is an amenable subgroup of~$\pi_1(X)$, where \mbox{$i:U \hookrightarrow X$} is the inclusion map.
\end{definition}

Gromov's vanishing simplicial volume theorem can be stated as follows.

\begin{theorem}[\cite{gro82}, see also~\cite{Ivanov80}] \label{theo:vanishing}
Let $M$ be a connected closed $m$-manifold.
Suppose that $M$ admits a cover by amenable open subset of multiplicity at most~$m$.
Then 
\[ 
\Vert M \Vert_{\D} =0.
\]
In particular, the simplicial volume of a connected closed manifold with amenable fundamental group is zero.
\end{theorem}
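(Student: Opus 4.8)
The plan is to deduce Theorem~\ref{theo:vanishing} from the bounded cohomology machinery that Gromov introduced alongside the simplicial volume. Recall that $\Vert M \Vert_\Delta$ is dual to the seminorm on bounded cohomology: $\Vert M \Vert_\Delta = 0$ if and only if the fundamental cohomology class of $M$ has vanishing image under the comparison map $H^m_b(M;\R) \to H^m(M;\R)$ at the level of seminorms, i.e. $\Vert [M]^* \Vert_\infty^{H^m_b} $ is not bounded below — more precisely it suffices to exhibit bounded cochains representing the dual fundamental class with arbitrarily small sup-norm. So the first step is to set up this duality (the Hahn--Banach / Kronecker pairing argument): for a closed orientable $m$-manifold, $\Vert M \Vert_\Delta \cdot \Vert \alpha \Vert_\infty \geq |\langle \alpha, [M]\rangle| = 1$ for any bounded cocycle $\alpha$ representing the generator of $H^m(M;\R)$, hence vanishing of $\Vert M \Vert_\Delta$ follows once we produce such $\alpha$ of arbitrarily small norm, which in turn follows if the bounded cohomology class of the fundamental cocycle vanishes. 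For non-orientable $M$ one passes to the orientation double cover, using multiplicativity of $\Vert \cdot \Vert_\Delta$ under finite covers.

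The heart of the matter is then an amenable-cover vanishing statement for bounded cohomology: if $\{U_i\}$ is an open cover of $M$ of multiplicity at most $m$ such that each $U_i$ is amenable in $M$ (i.e. $\pi_1(U_i) \to \pi_1(M)$ has amenable image), then the comparison map $H^m_b(M;\R) \to H^m(M;\R)$ is zero. The argument uses the nerve $N$ of the cover, which by the multiplicity hypothesis has dimension at most $m-1$, together with a map $M \to N$. One builds a relative bounded cohomology / multicomplex argument: each $U_i$ being amenable means $H^*_b(U_i;\R)$ injects appropriately (amenable groups have trivial bounded cohomology with $\R$ coefficients in positive degrees, so the restriction behaves like that of a contractible set at the bounded level), and a Mayer--Vietoris / spectral-sequence patching over the cover shows any bounded class in degree $m$ restricted to each $U_i$ dies, hence factors through $H^m(N;\R) = 0$ since $\dim N \le m-1$. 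This is precisely Gromov's argument, streamlined by Ivanov~\cite{Ivanov80}; alternatively one invokes it as a black box. The last clause of the theorem, that $\Vert M \Vert_\Delta = 0$ when $\pi_1(M)$ is amenable, is the special case of the trivial cover $\{M\}$ (multiplicity one $\le m$, and $M$ is amenable in itself), so it needs no separate argument.

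I expect the main obstacle — were one to write this out in full rather than cite it — to be the amenable-cover vanishing step, i.e. the passage from "each piece is amenable" to "the degree-$m$ bounded class vanishes." The subtlety is that bounded cohomology is not a sheaf and has no honest Mayer--Vietoris sequence in the naive sense; Gromov's resolution via multicomplexes and the "diffusion of chains" (averaging singular simplices using the amenability of the $\pi_1$ images via invariant means) is genuinely technical, and getting the bookkeeping of the nerve and the multiplicity bound right is where the dimension hypothesis $\le m$ is actually consumed. Since this theorem is quoted with references~\cite{gro82} and~\cite{Ivanov80}, the natural move in the paper is to give the duality reduction in a sentence or two and then cite these sources for the cohomological vanishing, rather than reproduce it; I would phrase the writeup accordingly, recording the reduction to bounded cohomology explicitly and pointing to Ivanov's account for the cover argument.
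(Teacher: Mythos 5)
The paper itself offers no proof of Theorem~\ref{theo:vanishing}: it is quoted from \cite{gro82} and \cite{Ivanov80}, so your plan --- state the duality reduction and cite these sources for the amenable-cover vanishing in bounded cohomology --- matches the paper's treatment, and your description of where the real work lies (the nerve of dimension at most $m-1$, averaging over invariant means, the absence of a naive Mayer--Vietoris for bounded cohomology) is accurate.

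There is, however, a genuine error in your duality reduction as written: the direction is inverted, and the step would fail. The inequality $1=|\langle \alpha,[M]\rangle|\le \Vert\alpha\Vert_\infty\cdot\Vert M\Vert_\Delta$ for a bounded cocycle $\alpha$ representing the generator of $H^m(M;\R)$ yields the lower bound $\Vert M\Vert_\Delta\ge 1/\Vert\alpha\Vert_\infty$; so exhibiting bounded representatives of the dual fundamental class with arbitrarily small sup-norm would prove that $\Vert M\Vert_\Delta$ is arbitrarily large (this is exactly how positivity is proved, e.g.\ for hyperbolic manifolds), not that it vanishes. What is needed is the opposite conclusion: the amenable-cover theorem gives that the comparison map $H^m_b(M;\R)\to H^m(M;\R)$ is zero, i.e.\ the class $[M]^*$ admits no bounded representative at all (its Gromov norm is $+\infty$, not small), and then the Hahn--Banach duality $\Vert c\Vert_1=\sup\{|\langle\beta,c\rangle| : \beta\in H^m_b(M;\R),\ \Vert\beta\Vert_\infty\le 1\}$, applied to $c=[M]$ where every bounded class pairs trivially because the pairing factors through the comparison map, gives $\Vert M\Vert_\Delta=0$. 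With that correction, the remainder of your outline (the trivial cover $\{M\}$ for the amenable-$\pi_1$ case, the orientation double cover for non-orientable $M$, and citing Ivanov's account for the cover argument) is correct and consistent with the cited sources.
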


\begin{remark}
Recall that every finitely generated group with subexponential growth is amenable; see~\cite{AS57} or~\cite[Theorem~6.11.12]{CSC} for instance.
Thus, every open subset $U \subseteq X$ with subexponential $\pi_1$-growth in~$X$, see Definition~\ref{def:subexp}, is amenable in~$X$.
This observation allows us to present an alternate proof of Corollary~\ref{coro:zerovol} which asserts that connected closed $m$-manifolds satisfying the collapsing assumption have zero simplicial volume.
Indeed, by Proposition~\ref{prop:cover}, such a manifold~$M$ admits a cover of multiplicity at most~$m$ by open subsets of subexponential $\pi_1$-growth in~$M$, and so by amenable open subsets.
It follows from Theorem~\ref{theo:vanishing} that $M$ has zero simplicial volume.
\end{remark}

\forget
\begin{question}
Though every finitely generated group with subexponential growth is amenable, the converse is false, even among finitely presented groups.
For instance, the following finitely presented group
\[
G = \langle a,t \mid a^{t^2} = a^2, [[[a,t^{-1}],a],a]=1 \rangle
\]
is amenable and has exponential growth; see~\cite{BV05}.
(Here, $x^y=y^{-1}xy$ is the conjugate of~$x$ under~$y$.)
According to Theorem~\ref{theo:vanishing}, every closed manifold with fundamental group isomorphic to~$G$ has zero simplicial volume.
Is the minimal volume entropy of a simplicial complex with fundamental group isomorphic to~$G$ always zero?
\end{question}
\forgotten

\section{Simplicial complexes with positive minimal volume entropy}

By relying on the notion of Urysohn width, we show that the minimal volume entropy of a connected finite simplicial complex satisfying the fiber $\pi_1$-growth non-collapsing assumption is positive.


\subsection{Urysohn width and volume}
\mbox { }

\medskip

Let us go over the notion of Urysohn width in metric geometry.

\begin{definition} \label{def:UW}
The \emph{Urysohn $q$-width} of a compact metric space~$X$, denoted by~$\UW_q(X)$, is defined as the least real~$\delta >0$ such that there is a continuous map $\pi:X \to P$ from~$X$ to a simplicial $q$-complex~$P$, where all the fibers~$\pi^{-1}(p)$ have diameter at most~$\delta$ in~$X$.
That~is,
\begin{equation} \label{eq:width}
\UW_q(X) = \adjustlimits  \inf_{\pi:X \to P} \sup_{p \in P} \, \diam_X [\pi^{-1}(p)]
\end{equation}
where $\pi:X \to P$ runs over all continuous map from~$X$ to a simplicial $q$-complex~$P$.
Note that the simplicial complex~$P$ may vary with~$\pi$.
For a simplicial $m$-complex~$X$, we will simply write $\UW(X)$ for~$\UW_{m-1}(X)$.
\end{definition}

The Urysohn width can also be interpreted in terms of open covers; see~\cite[Lemma~0.8]{Guth17} for instance.

\begin{proposition} \label{prop:UW}
A compact metric space~$X$ has Urysohn $q$-width less than~$w$ if and only if there is a finite cover of~$X$ of multiplicity~$q+1$ by open subsets of diamter less than~$w$.
\end{proposition}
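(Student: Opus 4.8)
The plan is to prove the two implications separately, using the nerve of a cover in one direction and barycentric subdivision of the target complex in the other; both are classical manipulations, and only the second one requires real care.

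First I would treat the ``if'' direction. Suppose $X$ carries a finite open cover $\{U_i\}_{i=1}^s$ of multiplicity at most $q+1$ with $\diam U_i < w$ for every $i$. Since no $q+2$ of the $U_i$ have a common point, the nerve $P$ of the cover has dimension at most $q$. Pick a partition of unity $\{\phi_i\}$ subordinate to $\{U_i\}$, so $\supp \phi_i \subseteq U_i$ and $\sum_i \phi_i \equiv 1$, and set $\Phi(x) = (\phi_1(x), \dots, \phi_s(x))$, read in barycentric coordinates in $\Delta^{s-1}$. The vertices $v_i$ with $\phi_i(x) > 0$ correspond to sets $U_i$ that all contain $x$, hence span a simplex of $P$, so $\Phi$ maps $X$ into $P$. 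If $p$ lies in the interior of a simplex of $P$ with vertex set $\{v_i : i \in I\}$, then any $x \in \Phi^{-1}(p)$ has $\phi_i(x)>0$ for $i \in I$, whence $\Phi^{-1}(p) \subseteq U_{i_0}$ for a fixed $i_0 \in I$; therefore $\diam \Phi^{-1}(p) \le \max_i \diam U_i < w$ and $\UW_q(X) < w$.

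For the ``only if'' direction, suppose $\UW_q(X) < w$, so there is a continuous $\pi : X \to P$ with $\dim P \le q$ and $w' := \sup_{p} \diam \pi^{-1}(p) < w$. As $\pi(X)$ is compact it meets only finitely many open simplices, so I may assume $P$ is a finite complex with its standard piecewise-Euclidean metric. The key step is a tube-type lemma: every $p \in P$ has an open neighborhood $V_p$ with $\diam \pi^{-1}(V_p) < w$. If this failed, compactness of $X$ would produce points $x_n, y_n$ with $\pi(x_n), \pi(y_n) \to p$, $d(x_n,y_n) \ge w$, and convergent subsequences with limits $x, y \in \pi^{-1}(p)$ satisfying $d(x,y) \ge w > w'$, a contradiction. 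Now extract a finite subcover from $\{V_p\}$, let $\lambda>0$ be a Lebesgue number for it, and barycentrically subdivide $P$ enough times that the mesh is $< \lambda/2$. Then every open star $\textrm{st}(v)$ of a vertex $v$ of the subdivision has diameter $< \lambda$, hence lies in some $V_p$, so $\pi^{-1}(\textrm{st}(v))$ has diameter $< w$. Finally, the open stars of the vertices of a complex of dimension at most $q$ form a cover of multiplicity at most $q+1$ (each point's carrier has at most $q+1$ vertices), and this bound on the multiplicity is inherited by the preimage cover $\{\pi^{-1}(\textrm{st}(v))\}$, which is therefore the cover we want.

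The routine parts are the nerve construction and the bookkeeping with barycentric coordinates; the one place that needs genuine input is the tube-type lemma, where compactness of $X$ is used to pass from ``each fiber is small'' to ``each fiber over a small neighborhood is small'', together with the subsequent refinement, where one subdivides $P$ until its open stars --- which already control multiplicity but not yet diameter --- become small enough to be engulfed by the neighborhoods $V_p$. I would also make sure the strict inequalities are handled with room to spare: $w' < w$ on the one hand, and $\max_i \diam U_i < w$ (a maximum over finitely many sets) on the other.
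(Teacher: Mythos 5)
Your proof is correct and follows the standard argument: the paper itself does not prove this proposition (it cites Guth's Lemma~0.8) but uses exactly your nerve/partition-of-unity construction for the analogous statements in Propositions~\ref{prop:cover} and~\ref{prop:width}, and your converse direction (tube lemma plus barycentric subdivision so that open stars fit inside the neighborhoods $V_p$) is the expected one. The only nitpick is in the tube lemma: since the diameter is a supremum, you should pick $x_n,y_n$ with $d(x_n,y_n)>w-1/n$ (or $>\tfrac{w+w'}{2}$) rather than $\ge w$, which still yields $d(x,y)\ge w>w'$ in the limit.
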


In the case of simplicial complexes, we can further require extra structural properties on the map~$\pi:X \to P$ in the previous definition.

\begin{proposition} \label{prop:width}
Let $X$ be a finite simplicial complex with a piecewise Riemannian metric.
Subdividing~$X$ if necessary, we can assume that the maps~$\pi:X \to P$ in the definition of the Urysohn width are simplicial and that their fibers are connected.
\end{proposition}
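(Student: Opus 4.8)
The plan is to start from an arbitrary continuous map $f:X \to P$ to a simplicial $(m-1)$-complex realizing (up to $\e$) the Urysohn width, and to replace it by a simplicial map with connected fibers without increasing the fiber diameters by more than a controlled amount, at the cost of subdividing $X$ first. First I would fix the piecewise Riemannian metric $g$ on $X$ and note that, since $X$ is compact, the metric is bi-Lipschitz equivalent to the piecewise flat metric $h_X$ on any fixed subdivision; more importantly, the mesh of an iterated barycentric subdivision of $X$ (measured with respect to $g$) tends to $0$. So, given $\e>0$, I choose a subdivision $X'$ of $X$ fine enough that every closed simplex of $X'$ has $g$-diameter less than $\e$, and in particular the closed star $\overline{\mathrm{st}}(v)$ of each vertex $v$ of $X'$ has $g$-diameter less than $2\e$ (or any prescribed bound, after iterating the subdivision).

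Next I would invoke the simplicial approximation theorem in the form cited in the excerpt (\cite[\S2.C]{hatcher}): after further subdividing $X'$ if necessary, the map $f:X \to P$ is $C^0$-approximated by a simplicial map $\pi:X \to P$ which is homotopic to $f$ through a homotopy of small tracks, and in fact $\pi$ can be taken to agree with $f$ in the sense that $\pi(\sigma)$ lies in the carrier of $f(\sigma)$ for each simplex $\sigma$. The key point is that simplicial approximation only moves points within closed stars, so each fiber $\pi^{-1}(p)$ is contained in a union of closed stars over the fiber $f^{-1}(\text{carrier of }p)$; combined with the fact that the fibers $f^{-1}(p')$ have diameter at most $\UW(X)+\e$ and that each closed star has diameter at most $2\e$, one gets $\diam_X \pi^{-1}(p) \le \UW(X) + 5\e$, say. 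Letting $\e \to 0$ shows that the infimum in \eqref{eq:width} is unchanged if we restrict to simplicial maps to simplicial $(m-1)$-complexes.

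Finally, to arrange connected fibers, I would apply Proposition~\ref{prop:connected} to the simplicial map $\pi:X \to P$: it produces a surjective simplicial map $\bar\pi:X \to \bar P$ to a simplicial complex $\bar P$ of dimension at most $m-1$ whose fibers are exactly the connected components of the fibers of $\pi$. Since connected components of a set have diameter at most that of the set, $\sup_{q\in\bar P}\diam_X\bar\pi^{-1}(q) \le \sup_{p\in P}\diam_X\pi^{-1}(p)$, so passing to $\bar\pi$ does not increase the quantity being minimized. This gives the claim. The main obstacle is the bookkeeping in the second step: one must make sure that simplicial approximation can be carried out \emph{without} enlarging the target dimension beyond $m-1$ and while keeping quantitative control on how much points move, so that the diameter bound degrades only by $O(\mathrm{mesh})$; this is exactly what the "move only within closed stars" property of simplicial approximation buys us, but it has to be combined carefully with the choice of subdivision made before $f$ was even fixed — in practice one fixes $f$ first, then subdivides $X$ depending on $f$.
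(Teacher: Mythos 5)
Your overall architecture (make the map simplicial, then apply Proposition~\ref{prop:connected} to get connected fibers) matches the paper's, and the last step is fine. But your route to the simplicial map is different from the paper's and contains a genuine gap at its key quantitative step. The paper does not apply simplicial approximation to the width-realizing map $f$ directly; it first passes to the open-cover characterization (Proposition~\ref{prop:UW}), builds the nerve $P$ of a cover $\{U_i\}$ of multiplicity $q+1$ and diameter $<\delta$, and approximates the partition-of-unity map $\Phi$ by a simplicial map whose barycentric coordinates are supported in the $U_i$. That support condition forces \emph{every} fiber of $\pi$ to lie inside a single $U_i$, which is what gives $\diam_X\pi^{-1}(p)<\delta$.

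Your direct approach breaks down at the claim that $\diam_X \pi^{-1}(p) \le \UW(X)+5\varepsilon$. Simplicial approximation guarantees that $\pi(x)$ lies in the carrier of $f(x)$; equivalently, if $\pi(x)=p$ then $f(x)$ lies in the open star (in $P$) of the carrier of $p$. It does \emph{not} guarantee that $x$ is within $O(\mathrm{mesh}(X'))$ of the fiber $f^{-1}(p)$: the "movement" happens in the codomain, within a whole closed simplex of $P$, and $P$ carries no metric control. Consequently $\pi^{-1}(p)$ is only contained in $f^{-1}(\mathrm{st}(\tau_p))$, a union of fibers of $f$ over an entire open star of $P$, whose diameter can be as large as $\diam(X)$ regardless of how finely you subdivide $X$. (Already for $X=[0,1]\to P=[0,1]$ with $P$ a single edge, a point $x$ with $f(x)=0.99$ can land in $\pi^{-1}(1/2)$, far from $f^{-1}(1/2)$.) Subdividing the domain does not help; what is needed is to subdivide the \emph{target} $P$ and prove, by a compactness/uniform-continuity argument, that preimages under $f$ of sufficiently small closed stars of $P$ have diameter at most $\sup_p\diam f^{-1}(p)+\eta$ — but that statement is essentially Proposition~\ref{prop:UW}, i.e.\ exactly the open-cover detour the paper takes and that you tried to avoid. (Your worry about the target dimension is, by contrast, a non-issue: subdivision does not change dimension.)
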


\begin{proof}
Suppose $\UW_q(X) < \delta$.
By Proposition~\ref{prop:UW}, there is a finite open cover~$\mathcal{U} = \{ U_i \}_{i=1,\cdots, s}$ of~$X$ of multiplicity~$q+1$ and diameter less than~$\delta$.
Consider the natural map $\Phi:X \to P \subseteq \Delta^{s-1}$ to the nerve~$P$ of~$\mathcal{U}$ given by a partition of unity of the cover.
As in the proof of Proposition~\ref{prop:cover}, subdividing~$X$ and~$P$, we can approximate $\Phi:X \to P$ by a simplicial map $\pi:X \to P$ close to~$\Phi$ for the $C^0$-topology, whose normalized barycentric coordinates $\pi_i:X \to [0,1]$ have their support in~$U_i$; see~\cite[\S2.C]{hatcher}.
Thus, every fiber~$\pi^{-1}(p)$ lies in one of the open sets~$U_i$.
Therefore, $\diam_X \pi^{-1}(p) < \delta$.
As a result, we can assume that the map $\pi:X \to P$ is simplicial in the definition of the Urysohn width; see~\eqref{eq:width}.
Now, by Proposition~\ref{prop:connected}, we can replace $\pi:X \to P$ with a simplical map $\bar{\pi}:X \to \bar{P}$ to a simplicial complex~$\bar{P}$ of dimension at most~$q$, whose fibers are connected and of diameter less than~$\delta$.
\end{proof}

We will need the following recent result of Liokumovich-Lishak-Nabutovsky-Rotman~\cite{LLNR}, extending a theorem of L.~Guth~\cite{Guth17}.
The proof of this result was later on simplified by P.~Papasoglu~\cite{Pap}; see also~\cite{Nab}.

\begin{theorem}[\cite{Guth17}, \cite{LLNR}, \cite{Pap}, \cite{Nab}] \label{theo:width}
Let $X$ be a finite simplicial $m$-complex with a piecewise Riemannian metric.
Then
\[
\vol(X) \geq C_m \, \UW(X)^m
\]
where $C_m$ is a explicit positive constant depending only on~$m$.
More precisely, if for some $R>0$ every ball~$B(R) \subseteq X$ of radius~$R$ has volume at most~$C_m \, R^m$ then
\[
\UW(X) \leq R.
\]
\end{theorem}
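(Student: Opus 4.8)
The plan is to prove the sharper, local statement — if every ball $B(R)\subseteq X$ of radius $R$ has volume at most $C_m R^m$, then $\UW(X)\le R$ — from which the inequality $\vol(X)\ge C_m\,\UW(X)^m$ follows at once: if $\vol(X)<C_m R^m$ then a fortiori every $R$-ball has volume $<C_m R^m$, hence $\UW(X)\le R$, and letting $R$ decrease to $(\vol(X)/C_m)^{1/m}$ gives the inequality. Absorbing the many geometric constants that turn up into $C_m$ at the end (by running the whole argument at scale $R/K_m$ for a suitable $K_m=K_m(m)$), it suffices, by the cover reformulation of the Urysohn width in Proposition~\ref{prop:UW}, to cover $X$ by open sets of diameter $O(R)$ with multiplicity at most~$m$. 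I would follow the strategy of Guth~\cite{Guth17} and Papasoglu~\cite{Pap}, arguing by induction on the dimension~$m$. The base case $m=1$ is elementary: choosing $C_1<2$, every $R$-ball of a connected metric graph has length less than $2R$, so there is no geodesic arc of length $\ge 2R$, whence the diameter is at most $2R$, and $\UW_0$ of a connected space is its diameter.

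For the inductive step ($m\ge 2$) the crux is to cut $X$ along a codimension-one \emph{wall}. Fix a maximal $R$-separated net $\{x_i\}$ in $X$, so that the balls $B(x_i,R)$ cover~$X$. For each center, the coarea (Eilenberg) inequality applied to the $1$-Lipschitz function $\dist(x_i,\cdot)$ provides a radius $t_i\in(R/2,R)$ for which the sphere $S_i=\partial B(x_i,t_i)$ has $(m-1)$-volume at most $\tfrac{C}{R}\,\vol\bigl(B(x_i,R)\bigr)\le C\,C_m\,R^{m-1}$, with $C=C(m)$. Peeling off the balls $B(x_i,t_i)$ one at a time, one assembles a subset $W=\bigcup_i S_i$ — after a pruning that controls how the spheres overlap each other — enjoying two properties: the connected components of $X\setminus W$ have diameter $O(R)$, and $W$, after a triangulation, is again a simplicial complex of dimension $m-1$ in which every $R$-ball has $(m-1)$-volume below the threshold $C_{m-1}R^{m-1}$ that is needed to invoke the induction hypothesis in dimension $m-1$. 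It is exactly this last requirement that forces $C_m$ to be chosen small relative to $C_{m-1}$.

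Granting such a wall, the induction hypothesis in dimension $m-1$ applied to $W$ yields a cover $\{V_j\}$ of $W$ by open sets of diameter $O(R)$ and multiplicity at most $m-1$; thickening each $V_j$ slightly inside $X$ gives open sets $\hat V_j$ of diameter $O(R)$, still of multiplicity at most $m-1$, whose union is an open neighbourhood of~$W$. The connected components of the complement of the closure of this neighbourhood are pairwise disjoint open sets, each contained in a component of $X\setminus W$ and hence of diameter $O(R)$. Together, the sets $\hat V_j$ and these components form a cover of $X$ by sets of diameter $O(R)$ and multiplicity at most $(m-1)+1=m$, so $\UW_{m-1}(X)=O(R)$ by Proposition~\ref{prop:UW}; the rescaling mentioned above then upgrades $O(R)$ to~$R$.

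The main obstacle is the wall construction in the inductive step: one must \emph{simultaneously} ensure that the pieces of $X\setminus W$ are small and that the local $(m-1)$-volume of $W$ stays under the threshold feeding the next round of the induction, with all constants depending only on~$m$. Bounding the mutual overlaps of the boundary spheres $S_i$ — so that $W$ does not pile up too much $(m-1)$-volume near any point even when the net is large — is the delicate point; it is here that the smallness of $C_m=C_m(m)$ and a careful greedy selection of the peeled-off balls enter, and this is the technically hardest part of the arguments of~\cite{Guth17}, \cite{LLNR}, \cite{Pap} and~\cite{Nab}.
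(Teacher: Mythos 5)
The paper does not prove Theorem~\ref{theo:width} at all: it is imported verbatim from \cite{Guth17}, \cite{LLNR}, \cite{Pap} and \cite{Nab}, so there is no internal argument to compare yours against. Judged on its own terms, your outline correctly identifies the overall architecture of those proofs --- reduction to the local statement, the cover reformulation of Proposition~\ref{prop:UW}, induction on dimension via a codimension-one wall, and the rescaling trick to absorb constants --- and both the base case and the final assembly of the cover (multiplicity $m-1$ on a neighbourhood of $W$ plus the components of the complement) are sound.

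However, the inductive step as written has a genuine gap, and it sits exactly where you flag it. Taking a maximal $R$-separated net and the coarea spheres $S_i=\partial B(x_i,t_i)$ does \emph{not} produce a wall $W$ whose $R$-balls have $(m-1)$-volume below the threshold $C_{m-1}R^{m-1}$: a general simplicial complex has no doubling or packing property, so a single point of $X$ may lie within distance $R$ of arbitrarily many of the $R$-separated centers $x_i$ (consider a wedge of many edges of length $R$), and the spheres $S_i$ can therefore accumulate unboundedly much $(m-1)$-volume inside one $R$-ball of $W$ even though each $S_i$ individually satisfies the coarea bound $\vol_{m-1}(S_i)\lesssim C_m R^{m-1}$. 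No ``pruning'' of a net repairs this while simultaneously keeping the components of $X\setminus W$ small; controlling the \emph{local} volume of the separating set is precisely the content of the theorem, and the cited proofs obtain it by a different mechanism (Papasoglu selects the separating hypersurface by an extremal argument over separating relative cycles rather than from a net, while Guth and Liokumovich--Lishak--Nabutovsky--Rotman run multiscale or Hausdorff-content arguments). As it stands, your proposal reduces the theorem to a wall-construction lemma that is essentially equivalent to the theorem itself, so the proof is incomplete.
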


A more general statement involving the lower dimensional widths and the Hausdorff content of balls holds true; see \cite{LLNR}, \cite{Pap}, \cite{Nab}.

\subsection{Diameter and uniform group growth}
\mbox { }

\medskip

Let us present the following classical result relating the diameter and the volume entropy of a space, similar in spirit to the \v{S}varc-Milnor lemma; see~\cite[\S5.16]{Gromov99}.

\begin{proposition}\label{prop:diam-ent}
Let $U$ be a connected open subset in a connected finite simplicial complex~$X$ with a piecewise Riemannian metric.
Then
\[
\diam_X(U) \cdot \ent(X) \geq \frac{1}{2} \, \ent(\Gamma_U)
\]
where $\Gamma_U := i_*[\pi_1(U)]$ is the image of~$\pi_1(U)$ under the group homomorphism induced by the inclusion map $i:U \hookrightarrow X$.
\end{proposition}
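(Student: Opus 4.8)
The plan is to compare lengths in $X$ with word lengths in the group $\Gamma_U$ by choosing a good generating set coming from loops in $U$, so that the count $\mathcal{N}(X;t)$ from Definition~\ref{def:entN} dominates the count of group elements of bounded word length, and then to invoke Proposition~\ref{prop:entN}. Concretely, fix a basepoint $u_0 \in U$ and let $D = \diam_X(U)$. The subgroup $\Gamma_U \leq \pi_1(X,u_0)$ is finitely generated (as the image of the fundamental group of a finite simplicial complex), so choose a finite symmetric generating set $S$ for $\Gamma_U$; after replacing each generator by a loop of $U$ of length at most, say, $2D + c$ (possible since any two points of $U$ are joined in $X$ by a path of length $\le D$, and one can represent a generator by a based loop contained in $U$ up to a controlled-length correction — here one uses that $U$ is path-connected and open in the finite complex $X$, so $\pi_1(U,u_0)$ is generated by based loops of bounded length, hence so is its image $\Gamma_U$), we may assume every element of $S$ is represented by a loop in $U$ through $u_0$ of length at most $2D$ (absorbing additive constants by the freedom in choosing $S$; more carefully one gets a bound like $2D + O(1)$, which does not affect the entropy computation).

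Next I would produce the key length estimate: for $g \in \Gamma_U$ with word length $|g|_S = n$, writing $g = s_1 s_2 \cdots s_n$ with $s_j \in S$ and concatenating the corresponding loops, we obtain a based loop in $X$ representing $g$ of length at most $2Dn$. Hence every element of $\Gamma_U$ of word length at most $n$ is the homotopy class of a loop of $X$ based at $u_0$ of length at most $2Dn$; since distinct group elements give distinct homotopy classes in $\pi_1(X,u_0)$, this yields
\[
\mathcal{N}(X; 2Dn) \;\ge\; \#\{ g \in \Gamma_U : |g|_S \le n \}.
\]
Taking $\tfrac{1}{2Dn}\log$ of both sides and letting $n \to \infty$, the left side tends to $\frac{1}{2D}\,\ent(X)$ by Proposition~\ref{prop:entN}, while the right side tends to $\frac{1}{2D}$ times... no: the right side, after dividing by $2Dn$, tends to $\tfrac{1}{2D}\ent(\Gamma_U,S) \ge \tfrac{1}{2D}\ent(\Gamma_U)$ by Definition~\ref{def:algent} (the limit $\lim_n \tfrac1n \log \#\{|g|_S\le n\}$ equals $\ent(\Gamma_U,S) = \ent(\mathcal G_S, d_S)$). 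This gives
\[
\ent(X) \;\ge\; \frac{1}{2D}\,\ent(\Gamma_U) \;=\; \frac{\ent(\Gamma_U)}{2\,\diam_X(U)},
\]
which rearranges to the claimed inequality $\diam_X(U)\cdot \ent(X) \ge \tfrac12 \ent(\Gamma_U)$.

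The main obstacle — and the step deserving the most care — is the length bound on the generators: one must legitimately choose a generating set $S$ of $\Gamma_U$ all of whose elements are represented by loops that actually lie in $U$ (so their $X$-length is controlled by $\diam_X(U)$) and through a common basepoint $u_0 \in U$, rather than merely loops in $X$. This is where the open, path-connected, finite-complex hypotheses on $U \subseteq X$ enter: $\pi_1(U,u_0)$ is finitely generated, pick generators represented by loops contained in $U$; their lengths are bounded by some constant depending on $U$ and the metric, but one can always rescale the argument so that only the diameter $\diam_X(U)$ appears as the relevant scale (the additive constants wash out in the asymptotic slope, and a slightly more careful spanning-tree argument inside $U$ gives generator loops of length at most $2\diam_X(U)$ exactly). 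Once this is set up correctly, the concatenation estimate and the passage to the limit via Proposition~\ref{prop:entN} are routine, and the factor $\tfrac12$ is exactly the artifact of bounding each generator loop by $2\diam_X(U)$.
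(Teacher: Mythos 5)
Your outline --- generate $\Gamma_U$ by short based loops, dominate word balls of $\Gamma_U$ by the loop count $\mathcal{N}(X;t)$ of Definition~\ref{def:entN}, and conclude with Proposition~\ref{prop:entN} --- is exactly the paper's, and the concatenation/counting/limit part of your argument coincides with the paper's proof. The difference lies in the one step that carries all the content: the paper does not build the short generating set by hand, but invokes \cite[Proposition~3.22]{Gromov99} to obtain, for every $\varepsilon>0$, finitely many loops \emph{in $X$} (not required to lie in $U$), based at a fixed point, of length less than $2\,\diam_X(U)+\varepsilon$, whose homotopy classes generate $\Gamma_U$; the clean constant $\tfrac12$ is then recovered by letting $\varepsilon\to 0$ at the very end.

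Your justification of that step has genuine gaps. First, one cannot in general choose generators of $\Gamma_U$ represented by loops \emph{lying in $U$} of length at most $2\,\diam_X(U)$ (or $2\,\diam_X(U)+O(1)$): the length of a loop contained in $U$ is not controlled by the extrinsic diameter of $U$. Concretely, let $X$ be a metric graph made of two cycles of length $L$ wedged at a vertex $v_0$, together with chords of length $\delta$ joining $v_0$ to points spaced $\delta$ apart along the cycles, and let $U$ be a thin neighborhood of the two cycles; then $\diam_X(U)=O(\delta)$, while every nontrivial element of $\Gamma_U$ (a rank-two free subgroup, so $\ent(\Gamma_U)>0$) is represented only by loops of length at least $L$, so for $L\gg\delta$ no generating set of the kind you describe exists --- in particular your ``spanning-tree argument inside $U$'' cannot produce generator loops of length $2\,\diam_X(U)$, and the connecting paths realizing $\diam_X(U)$ must be allowed to leave $U$. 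Second, additive constants do \emph{not} wash out: generators of length $2\,\diam_X(U)+c$ only yield $\bigl(\diam_X(U)+\tfrac{c}{2}\bigr)\ent(X)\ge\tfrac12\,\ent(\Gamma_U,S)$, which is strictly weaker than the statement; one needs the bound $2\,\diam_X(U)+\varepsilon$ with $\varepsilon$ arbitrarily small and a limit, as in the paper. Third, the parenthetical reason for finite generation is invalid: $U$ is an open subset, not a finite subcomplex, and $\pi_1(U)$ can be infinitely generated (remove a convergent sequence of points from a $2$-simplex); the existence of a finite set of length-controlled generators is part of what the cited lemma provides, not an a priori fact. So the step you label routine is precisely the paper's appeal to Gromov's proposition, and your proposed substitute for it does not work as stated.
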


\begin{proof}
The proof of this result is classical.
By~\cite[Proposition~3.22]{Gromov99}, there exist finitely many loops~$\gamma_i \subseteq X$ based at some fixed basepoint~$x_0 \in X$ whose homotopy classes in~$X$ form a generating set~$S$ of~$\Gamma_U = i_*[\pi_1(U)]$ with
\[
\length(\gamma_i) < 2 \, \diam_X(U) + \varepsilon
\]
for some arbitrarily small~$\varepsilon >0$.
Clearly, every homotopy class $\alpha \in \Gamma_U$ can be represented by a loop~$\gamma \subseteq X$ based at~$x_0$ of length at most 
\[
(2 \, \diam_X(U) + \varepsilon) \cdot d_S(e,\alpha)
\]
where $d_S$ is the word distance on~$\Gamma_U$ induced by~$S$.
Thus, the number~$\mathcal{N}(X;T)$ of homotopy classes represented by loops based at~$x_0$ of length at most~$T$, see Definition~\ref{def:entN}, satisfies
\[
\mathcal{N}(X;T) \geq  {\rm card} \left\{ \alpha \in\Gamma_U \mid d_S(e,\alpha) \leq \frac{T}{2 \, \diam_X(U) + \varepsilon} \right\}
\]
It follows from Proposition~\ref{prop:entN} that
\[
\eent(X) \geq \frac{1}{2 \, \diam_X(U) + \varepsilon} \, \ent(\Gamma_U,S)
\]
for every $\varepsilon >0$.
Hence the result.
\end{proof}

\subsection{Non-collapsing assumption and minimal volume entropy}
\mbox{ } 

\medskip

We can now prove the following result complementing Theorem~\ref{theo:A}.

\begin{theorem} \label{theo:B}
Every connected finite simplicial $m$-complex~$X$ satisfying the non-collapsing assumption has positive minimal volume entropy.
More generally, given a piecewise Riemannian metric~$g$ on~$X$, if for some $R>0$ every ball~$B(R) \subseteq X$ of radius~$R$ has volume at most~$C_m \, R^m$ then
\[
\ent(X,g) \geq \frac{h(X)}{2R}
\]
where $C_m$ is an explicit positive constant depending only on~$m$.
\end{theorem}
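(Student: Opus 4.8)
The plan is to combine Theorem~\ref{theo:width} with Proposition~\ref{prop:diam-ent}, using the non-collapsing assumption to convert a small Urysohn width into a large volume entropy. First I would prove the more general (ball-volume) statement, from which the ``$\omega(X)>0$'' claim follows: indeed, for any piecewise Riemannian metric~$g$ on~$X$, either every ball of some fixed radius~$R$ has small volume, in which case the ball estimate applies, or else some ball has volume $>C_m R^m$, which forces $\vol(X,g)$ to be bounded below by a constant independent of~$g$; combining the two cases with the scaling behavior of $\ent(X,g)\vol(X,g)^{1/m}$ under rescaling the metric gives a uniform positive lower bound for $\omega(X)$. (Concretely, one may argue by contradiction: if $\omega(X)=0$ there is a sequence $g_n$ with $\ent(X,g_n)\vol(X,g_n)^{1/m}\to 0$; rescaling so that $\vol(X,g_n)=1$ makes $\ent(X,g_n)\to 0$, and then every ball of radius $R=(\,1/C_m)^{1/m}$ has volume at most $1\le C_mR^m$, so the ball estimate yields $\ent(X,g_n)\ge h(X)/(2R)>0$, a contradiction.)

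For the ball estimate, suppose $g$ is a piecewise Riemannian metric on~$X$ such that every ball $B(R)\subseteq X$ of radius~$R$ has volume at most $C_m R^m$, where $C_m$ is the constant from Theorem~\ref{theo:width}. By the second (quantitative) part of Theorem~\ref{theo:width}, this implies $\UW(X)=\UW_{m-1}(X)\le R$. By Proposition~\ref{prop:width} (after subdividing~$X$ if necessary, which does not change the metric or the homotopy type), we may realize this width by a \emph{simplicial} map $\pi:X\to P$ onto a finite simplicial complex~$P$ of dimension at most~$m-1$, whose fibers are connected and have diameter at most~$R$ in~$X$; passing to a simplicial complex of dimension exactly $m-1$ (by coning off or embedding $P$ in a higher-dimensional complex) we may assume $\dim P=m-1$ so that the non-collapsing assumption applies verbatim.

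Now invoke the non-collapsing assumption for this map $\pi:X\to P$: there is a connected component $F_{p_0}$ of some fiber $\pi^{-1}(p_0)$ such that the subgroup $\Gamma_{F_{p_0}}=i_*[\pi_1(F_{p_0})]\le\pi_1(X)$ has uniform exponential growth at least $h=h(X)>0$, i.e.\ $\ent(\Gamma_{F_{p_0}})\ge h(X)$. Since $F_{p_0}$ is a connected open-like subset contained in a fiber of diameter at most~$R$, we have $\diam_X(F_{p_0})\le R$ (here one should be slightly careful: $F_{p_0}$ is a subcomplex of a fiber; either apply Proposition~\ref{prop:diam-ent} to a small open neighborhood of $F_{p_0}$ that deformation retracts onto it and still has diameter close to~$R$, or note that Proposition~\ref{prop:diam-ent} and its proof via \cite[Proposition~3.22]{Gromov99} apply equally to connected subcomplexes). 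Applying Proposition~\ref{prop:diam-ent} gives
\[
R\cdot\ent(X,g)\ \ge\ \diam_X(F_{p_0})\cdot\ent(X,g)\ \ge\ \tfrac12\,\ent(\Gamma_{F_{p_0}})\ \ge\ \tfrac12\,h(X),
\]
which rearranges to $\ent(X,g)\ge h(X)/(2R)$, as claimed.

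The main obstacle I anticipate is the bookkeeping in matching the output of Theorem~\ref{theo:width} and Proposition~\ref{prop:width} to the precise hypotheses of the non-collapsing assumption: the width is realized by a simplicial map to a complex of dimension \emph{at most} $m-1$ with connected fibers, whereas the non-collapsing assumption quantifies over maps to complexes of dimension \emph{exactly} $m-1$, and one must check that subdividing~$X$ to achieve the simplicial approximation does not affect $h(X)$ (it does not, since $h(X)$ depends only on $\pi_1(X)$ and the homotopy type). A secondary subtlety is ensuring that the diameter bound on a connected component $F_{p_0}$ of a fiber --- rather than on the whole fiber --- still feeds correctly into Proposition~\ref{prop:diam-ent}; this is harmless because any connected component of a set of diameter $\le R$ has diameter $\le R$ in the ambient space. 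Everything else is a direct chaining of the cited results.
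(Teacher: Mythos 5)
Your proposal is correct and follows essentially the same route as the paper: Theorem~\ref{theo:width} together with Proposition~\ref{prop:width} produce a simplicial map to an $(m-1)$-complex with connected fibers of controlled diameter, the non-collapsing assumption supplies a fiber component whose image subgroup has entropy at least $h(X)$, and Proposition~\ref{prop:diam-ent} --- applied, exactly as the paper does, to the preimage $U_{p_0}$ of a small neighborhood of $p_0$, which is homotopy equivalent to the fiber and has nearly the same diameter --- yields $\ent(X,g)\ge h(X)/(2R)$. The only cosmetic difference is that you prove the ball estimate first and deduce $\omega(X)>0$ by a rescaling argument, whereas the paper gets $\omega(X)\ge h(X)/(2C'_m)$ directly from the width--volume inequality and then notes the ball refinement; both are fine.
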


\begin{proof}
Let $g$ be a piecewise Riemannian metric on~$X$.
By Theorem~\ref{theo:width}  and Proposition~\ref{prop:width}, there exists a constant~$C'_m > 0$ and a simplicial map $\pi:X \to P$ to some simplicial $(m-1)$-complex~$P$, where every fiber~$F_p=\pi^{-1}(p)$ is connected, such that 
\begin{equation} \label{eq:diam1}
\diam_X (F_p) < C'_m \, \vol(X,g)^{\frac{1}{m}}.
\end{equation}
The constant $C'_m$ can be taken arbitrarily close to~$C_m^{-\frac{1}{m}}$, where $C_m$ is the constant in Theorem~\ref{theo:width}.
By the non-collapsing assumption, one of the subgroups~$i_*[\pi_1(F_{p_0})]$ has uniform exponential growth at least~$h(X)$.
The point~$p_0 \in P$ has a small enough open neighborhood $B_{p_0} \subseteq  P$ whose preimage $U_{p_0}=\pi^{-1}(B_{p_0})$ is homotopy equivalent to the fiber~$F_{p_0}$ and has diameter close to the one of~$F_{p_0}$ so that 
\begin{equation} \label{eq:diam2}
\diam_X (U_{p_0}) < C'_m \, \vol(X,g)^{\frac{1}{m}}.
\end{equation}
The group~$\Gamma_{p_0}=i_*[\pi_1(U_{p_0})]$, which is isomorphic to~$i_*[\pi_1(F_{p_0})]$, has uniform exponential growth at least~$h(X)$.
It follows from Proposition~\ref{prop:diam-ent} that 
\begin{equation} \label{eq:chain-diam}
\frac{1}{2} \, h(X) \leq \frac{1}{2} \, \ent(\Gamma_{p_0}) \leq \diam_X(U) \cdot \ent(X,g) \leq C'_m \, \ent(X,g) \, \vol(X,g)^{\frac{1}{m}}.
\end{equation}
Hence, the minimal volume entropy of~$X$ is positive and satisfies 
\[
\omega(X) \geq \frac{1}{2 C'_m} \, h(X) > 0. 
\]

Suppose that every ball~$B(R)$ of radius~$R$ has volume at most~$C_m \, R^m$, where $C_m$ is the constant in Theorem~\ref{theo:width}.
By Theorem~\ref{theo:width}, we can replace the right-hand sides in the inequalities~\eqref{eq:diam1} and~\eqref{eq:diam2} by~$R$.
By making the appropriate change in the inequality chain~\eqref{eq:chain-diam}, we derive the desired lower bound for the volume entropy of~$X$.
\end{proof}

The following result provides examples of simplicial complexex satisfying the non-collapsing assumption.

\begin{proposition} \label{prop:hyp}
Let $X$ be a finite aspherical simplicial $m$-complex whose $m$-th cohomology group with real coefficients~$H^m(X;\R)$ is nonzero.
Suppose the fundamental group of~$X$ is a non-elementary word hyperbolic group.
Then $X$ satisfies the non-collapsing assumption.

In particular, every closed aspherical manifold whose fundamental group is a non-elementary word hyperbolic group satisfies the non-collapsing assumption.
\end{proposition}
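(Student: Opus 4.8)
The plan is to verify the non-collapsing assumption directly: given an arbitrary simplicial map $\pi:X \to P$ to a finite simplicial complex~$P$ of dimension~$m-1$, I must produce a connected component~$F_{p_0}$ of some fiber~$\pi^{-1}(p_0)$ such that $i_*[\pi_1(F_{p_0})]$ has uniform exponential growth bounded below by a constant~$h=h(X)$ depending only on~$X$. The natural strategy is a proof by contradiction combined with the characterization of the collapsing assumption in terms of open covers (Proposition~\ref{prop:cover}) and an obstruction coming from the nonvanishing of $H^m(X;\R)$. Suppose no such component exists; then for every simplicial $\pi:X \to P$ with $\dim P = m-1$ every connected component of every fiber has $i_*[\pi_1(F_p)]$ of \emph{subexponential} growth (here I use the crucial fact that in a \emph{word hyperbolic} group every finitely generated subgroup of subexponential growth is in fact virtually cyclic or finite, hence elementary — a non-elementary hyperbolic group contains no such "large but subexponential" subgroups; moreover hyperbolic groups satisfy uniform exponential growth in the sense that there is a single~$h>0$ bounding the algebraic entropy of \emph{every} non-elementary subgroup from below, by a theorem of Arzhantseva--Lysenok / Koubi). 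So the dichotomy is clean: each $i_*[\pi_1(F_p)]$ is either elementary (finite or virtually~$\Z$, in particular of subexponential growth) or has algebraic entropy $\geq h$. The assumed failure of non-collapsing thus forces \emph{every} fiber component to have elementary $i_*[\pi_1(F_p)]$, which via Proposition~\ref{prop:cover} means $X$ admits a cover of multiplicity at most~$m$ by open subsets of subexponential $\pi_1$-growth in~$X$.

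The second step is to derive a contradiction from this cover together with asphericity and $H^m(X;\R)\neq 0$. Since $X$ is aspherical, $X$ is a $K(\pi_1(X),1)$ up to the $m$-skeleton, and $H^m(X;\R) = H^m(\pi_1(X);\R) \neq 0$. The open sets~$U_i$ of the cover have $\pi_1$-image of subexponential growth, hence (being subgroups of a hyperbolic group) amenable; in fact one wants to run a bounded-cohomology / simplicial-volume-style vanishing argument. Concretely: by Gromov's vanishing theorem (Theorem~\ref{theo:vanishing}) and the remark following it, a cover of multiplicity $\leq m$ by amenable open subsets forces the image of $H^m_b(X;\R) \to H^m(X;\R)$ to vanish; but for a word hyperbolic group the comparison map $H^m_b(\pi_1(X);\R) \to H^m(\pi_1(X);\R)$ is surjective (Mineyev's theorem on bounded cohomology of hyperbolic groups), so $H^m(X;\R) = H^m(\pi_1(X);\R) = 0$, contradicting the hypothesis. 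Hence some fiber component must have non-elementary, and therefore (by the uniform exponential growth of subgroups of hyperbolic groups) uniformly exponentially growing, $\pi_1$-image, with a lower bound $h = h(X)$ that depends only on~$\pi_1(X)$, i.e.\ only on~$X$. This establishes the non-collapsing assumption.

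The "in particular" statement is then immediate: a connected closed aspherical $m$-manifold~$M$ is a finite aspherical simplicial $m$-complex (after triangulation) with $H^m(M;\R) = \R \neq 0$ by Poincar\'e duality, so if $\pi_1(M)$ is non-elementary word hyperbolic the hypotheses of the first part are met and $M$ satisfies the non-collapsing assumption.

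The main obstacle I anticipate is the cohomological vanishing step: one needs the right amenable-cover vanishing statement at the level of ordinary (not just bounded) cohomology, together with the surjectivity of the comparison map for hyperbolic groups, and one must be careful that Gromov's vanishing theorem is applied in the simplicial-complex setting rather than only for manifolds — the remark after Theorem~\ref{theo:vanishing} handles the passage from subexponential $\pi_1$-growth to amenability, but the cohomological conclusion for a general finite aspherical complex (as opposed to the fundamental-class/simplicial-volume formulation) requires invoking the general form of Gromov's mapping theorem for bounded cohomology with amenable covers. An alternative route that sidesteps bounded cohomology entirely is to use the essentialness obstruction directly: a cover of multiplicity $\leq m$ by open sets $U_i$ each of which has $\pi_1(U_i) \to \pi_1(X)$ with virtually cyclic (elementary) image yields, via the nerve map $X \to P$ with $\dim P \leq m-1$ landing in a complex whose fundamental group receives no essential $m$-dimensional class, a factorization of the classifying map $\Phi:X \to K(\pi_1(X),1)$ through the $(m-1)$-skeleton — one checks that each $U_i$ maps into a subcomplex of $K(\pi_1(X),1)$ realizing the virtually cyclic subgroup, which is homotopy equivalent to a graph or to $\R P^\infty$-type space and in any case can be pushed below dimension~$m$; gluing these over the nerve shows $X$ is \emph{in}essential, contradicting $H^m(X;\R)\neq 0$ (which implies essentialness for aspherical $X$). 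I would present whichever of the two arguments — bounded-cohomology vanishing or direct inessentiality — is cleanest, and I expect the inessentiality argument to be the more self-contained given the machinery already set up in this section.
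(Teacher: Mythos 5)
Your proposal is correct and follows essentially the same route as the paper: argue by contradiction, note that failure of non-collapsing forces the collapsing assumption (every fiber subgroup elementary, hence amenable), apply the amenable-cover vanishing of the comparison map from bounded cohomology together with Mineyev's surjectivity for hyperbolic groups to contradict $H^m(X;\R)\neq 0$, and then upgrade "exponential growth" to "uniform exponential growth at least $h(X)$" via a uniform free-subgroup result for non-elementary subgroups of hyperbolic groups. The only cosmetic difference is the group-theoretic input for that last step: the paper first records that $\pi_1(X)$ is torsion-free (being the fundamental group of a finite-dimensional aspherical complex) and invokes Delzant's theorem on powers generating free subgroups, whereas you cite Koubi/Arzhantseva--Lysenok; both yield the required uniform constant.
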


\begin{proof}
First observe that since $X$ is aspherical, its fundamental group~$\pi_1(X)$ is torsion-free, otherwise there would exist a finite-dimensional aspherical space with a finite fundamental group, which is impossible.
%
%
Let $\pi:X \to P$ be a simplicial map to a finite simplicial $(m-1)$-complex~$P$.
Suppose that all the subgroups $H=i_*[\pi_1(F_p)] \leqslant \pi_1(X)$, where $F_p$ is a connected component of a fiber~$\pi^{-1}(p)$ and $i:F_p \hookrightarrow X$ is the inclusion map, have subexponential growth (and so are amenable).
That is, the simplicial complex~$X$ satisfies the collapsing assumption.
According to the generalization given by~\cite[Theorem~9.2]{Ivanov80} of Gromov's vanishing simplicial volume theorem, see~Theorem~\ref{theo:vanishing}, the canonical homomorphism $\widehat{H}^m(X;\R) \to H^m(X;\R)$ between bounded cohomology and singular cohomology vanishes.
By~\cite{mineyev}, the canonical homomorphism $\widehat{H}^m(X;\R) \to H^m(X;\R)$ is also surjective.
Since $H^m(X;\R)$ is nonzero, this leads to a contradiction.
Therefore, one of the subgroups~$H_0 = i_*[\pi_1(F_{p_0})] \leqslant \pi_1(X)$ has exponential growth and thus is not abelian.
Now, by~\cite{delzant}, given a torsion-free non-elementary hyperbolic group~$G$, there exists an integer~$n_G$ such that for every $x,y \in G$ which do not commute and for every $n \geq n_G$, the subgroup~$\langle x^n,y^n \rangle$ of~$G$ generated by~$x^n$ and~$y^n$ is free of rank~$2$.
In particular, every finitely presented subgroup~$H \leqslant G$ is either abelian or has uniform exponential growth at least~$h(G) = \frac{1}{n_G} \log(3)$.
This implies that the non-abelian subgroup~$H_0 \leqslant \pi_1(X)$ has uniform exponential growth at least~$h$, where $h=h(X)$ is a positive constant depending only on~$X$.
Hence the result.
%
\end{proof}

\begin{question}
Does every closed orientable manifold~$M$ satisfying the non-collapsing assumption have positive simplicial volume?
Otherwise, find examples of closed orientable manifolds with zero simplicial volume satisfying the non-collapsing assumption.
\end{question}



\subsection{Non-collapsing assumption and open covers} \label{sec:positivevol}

\mbox { }

\medskip

In this section, we give a characterization of the non-collapsing assumption in terms of open covers in the same vein as in the collapsing case; see Section~\ref{sec:zerovol}.

\begin{definition} \label{def:exp.cover}
A cover~$\mathcal{U} = \{U_i \}$ of a path-connected topological space~$X$ by path-connected open subsets has \emph{uniform exponential $\pi_1$-growth} at least~$h$ if the subgroup $\Gamma_{U}:=i_*[\pi_1(U)]$ of~$\pi_1(X)$ has uniform exponential growth at least~$h$ for some open subset~$U$ of~$\mathcal{U}$, where \mbox{$i:U \hookrightarrow X$} is the inclusion map.
\end{definition}

The following proposition mirrors Proposition~\ref{prop:cover} with an analoguous proof, which is left to the reader.


\begin{proposition} \label{prop:exp.cover}
A connected finite simplicial $m$-complex~$X$ satisfies the non-collapsing assumption if and only if every open cover of multiplicity at most~$m$ has uniform exponential $\pi_1$-growth at least $h = h(X) > 0$.
\end{proposition}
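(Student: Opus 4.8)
The plan is to establish Proposition~\ref{prop:exp.cover} by mirroring the two implications of Proposition~\ref{prop:cover}, using Proposition~\ref{prop:connected} to pass between simplicial maps to $(m-1)$-complexes and open covers of multiplicity at most~$m$. Throughout, the key dictionary is: an open cover $\mathcal{U} = \{U_i\}_{i=1,\dots,s}$ of multiplicity at most~$m$ corresponds to its nerve~$P$, a simplicial complex of dimension at most~$m-1$, together with the map $\Phi:X \to P \subseteq \Delta^{s-1}$ given by a partition of unity subordinate to~$\mathcal{U}$; conversely, a simplicial map $\pi:X \to P$ with $\dim P \leq m-1$ yields the open cover formed by the connected components of the preimages $\pi^{-1}(\textrm{st}(p))$ of the open stars of the vertices~$p$ of~$P$, which has multiplicity $\dim P + 1 \leq m$ and whose members deformation retract onto the connected components~$F_p$ of the fibers~$\pi^{-1}(p)$.

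For the first implication, suppose $X$ satisfies the non-collapsing assumption, and let $\mathcal{U} = \{U_i\}$ be an arbitrary open cover of multiplicity at most~$m$. As in the proof of Proposition~\ref{prop:cover}, I would form the nerve~$P$ (of dimension at most~$m-1$) and, after subdividing~$X$ and~$P$ if necessary, approximate the canonical map $\Phi:X \to P$ by a simplicial map $\pi:X \to P$ whose normalized barycentric coordinates $\pi_i:X \to [0,1]$ have support in~$U_i$; see~\cite[\S2.C]{hatcher}. Then every fiber $\pi^{-1}(p)$ lies in some~$U_i$, so for every connected component~$F_p$ of~$\pi^{-1}(p)$ we have $i_*[\pi_1(F_p)] \leqslant i_*[\pi_1(U_i)]$. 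If $\dim P \le m-2$, enlarge $P$ to an $(m-1)$-complex by attaching a harmless $(m-1)$-simplex (or simply note that the non-collapsing hypothesis is usually phrased for $\dim P = m-1$ and argue directly); by the non-collapsing assumption applied to~$\pi$, there is a connected component~$F_{p_0}$ with $i_*[\pi_1(F_{p_0})]$ of uniform exponential growth at least~$h(X)$. Since uniform exponential growth passes to overgroups, the subgroup $i_*[\pi_1(U_{i_0})]$ containing it also has uniform exponential growth at least~$h(X)$, so the cover~$\mathcal{U}$ has uniform exponential $\pi_1$-growth at least~$h(X)$, as required.

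For the converse, suppose every open cover of~$X$ of multiplicity at most~$m$ has uniform exponential $\pi_1$-growth at least $h = h(X) > 0$, and let $\pi:X \to P$ be a simplicial map to a finite simplicial $(m-1)$-complex~$P$. By Proposition~\ref{prop:connected} we may assume the fibers of~$\pi$ are connected. Form the open cover of~$X$ by the connected components of the preimages $\pi^{-1}(\textrm{st}(p))$ of the open stars of the vertices of~$P$; this cover has multiplicity $\dim P + 1 \leq m$, and each of its members deformation retracts onto a connected component~$F_p$ of some fiber~$\pi^{-1}(p)$, so its $\pi_1$-image in~$\pi_1(X)$ coincides (up to conjugation) with $i_*[\pi_1(F_p)]$. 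By hypothesis, one such member~$U_{p_0}$ has $i_*[\pi_1(U_{p_0})] \cong i_*[\pi_1(F_{p_0})]$ of uniform exponential growth at least~$h$. Since $h = h(X)$ depends only on~$X$ (not on~$\pi$), this is exactly the conclusion of the non-collapsing assumption for~$\pi$, and letting $\pi$ vary gives the full statement.

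The main obstacle, and the one point deserving a little care, is the bookkeeping of the constant~$h(X)$: one must check that the value of~$h$ produced in either direction is genuinely independent of the chosen map or cover, so that quantifying over all covers is equivalent to quantifying over all maps with the \emph{same} threshold. A second minor technical point is the dimension mismatch when the nerve has dimension strictly less than $m-1$; this is handled either by the padding trick above or by observing that the non-collapsing assumption is monotone under passing to maps whose target has smaller dimension (a map to a $k$-complex with $k < m-1$ can be composed with an inclusion into an $(m-1)$-complex). Both points are routine, which is why the proof is "left to the reader" in the statement; the plan here simply makes the routine verification explicit.
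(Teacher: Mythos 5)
Your converse direction (every cover of multiplicity at most $m$ has uniform exponential $\pi_1$-growth at least $h$ implies the non-collapsing assumption) is correct and is exactly the intended mirror of Proposition~\ref{prop:cover}: after Proposition~\ref{prop:connected}, the components of the preimages of open stars deformation retract onto the fiber components, so the relevant subgroups of $\pi_1(X)$ coincide and the constant $h$ is carried over unchanged. The gap is in your first implication, at the sentence ``since uniform exponential growth passes to overgroups, the subgroup $i_*[\pi_1(U_{i_0})]$ containing it also has uniform exponential growth at least $h(X)$''. This is not a fact you may invoke. Quantitatively it is false: the algebraic entropy of Definition~\ref{def:algent} is not monotone under passing to overgroups; for instance a finite-index free subgroup of rank $k$ inside $F_2$ has $\ent(F_k)=\log(2k-1)$, which exceeds $\ent(F_2)=\log 3$, so a subgroup can have uniform exponential growth at least $h$ while the ambient group does not have entropy at least $h$. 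Qualitatively the statement is also unavailable: whether a finitely generated group containing a subgroup of uniform exponential growth must itself have uniform exponential growth is a delicate question in the circle of problems the paper alludes to when recalling Wilson's examples, and in any case you would additionally need a lower bound on $\ent\bigl(i_*[\pi_1(U_{i_0})]\bigr)$ depending only on $X$, uniformly over the infinitely many covers $\mathcal{U}$ and members $U_{i_0}$ that can arise.

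So the point you set aside as ``routine bookkeeping of the constant $h(X)$'' is in fact the crux. The inclusion $i_*[\pi_1(F_{p_0})]\leqslant i_*[\pi_1(U_{i_0})]$ produced by the nerve construction transfers subexponential growth downward (which is why the proof of Proposition~\ref{prop:cover} goes through), but it does not transfer uniform exponential growth, let alone with the same constant, upward. To establish the forward implication you would need either a construction exhibiting a member $U$ of the given cover whose $\pi_1$-image is itself (up to conjugacy) a fiber-component subgroup of a suitable simplicial map --- which the nerve construction does not provide for an arbitrary cover --- or some additional argument controlling the entropy of the groups $\Gamma_U$ of Definition~\ref{def:exp.cover}. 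Note that the direction you do prove correctly is the one the paper actually exploits (Corollary~\ref{coro:non-collapsing}); your padding trick for nerves of dimension less than $m-1$ is fine and is not where the difficulty lies.
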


The following result is the counterpart of Corollary~\ref{coro:collapsing}.

\begin{corollary}\label{coro:non-collapsing}
Every connected finite simplicial $m$-complex~$X$ whose open covers of multiplicity at most~$m$ have uniform exponential $\pi_1$-growth at least $h = h(X)$ has positive minimal volume entropy. 
\end{corollary}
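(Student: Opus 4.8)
The plan is to combine the two main results established in this section, namely Theorem~\ref{theo:B} and Proposition~\ref{prop:exp.cover}, which together give an immediate deduction. First I would invoke Proposition~\ref{prop:exp.cover}: it asserts that a connected finite simplicial $m$-complex~$X$ satisfies the non-collapsing assumption \emph{if and only if} every open cover of~$X$ of multiplicity at most~$m$ has uniform exponential $\pi_1$-growth at least $h = h(X) > 0$. So the hypothesis of the corollary is, verbatim, one side of this equivalence, and applying the proposition converts it into the statement that~$X$ satisfies the non-collapsing assumption.

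Next I would apply Theorem~\ref{theo:B}, which states that every connected finite simplicial $m$-complex satisfying the non-collapsing assumption has positive minimal volume entropy, that is, $\omega(X) > 0$. Chaining these two facts yields the conclusion: the hypothesis on open covers gives the non-collapsing assumption via Proposition~\ref{prop:exp.cover}, and the non-collapsing assumption gives $\omega(X) > 0$ via Theorem~\ref{theo:B}. One could even quote the more precise quantitative bound in Theorem~\ref{theo:B}: if every ball $B(R) \subseteq X$ of radius~$R$ has volume at most $C_m R^m$, then $\ent(X,g) \geq \frac{h(X)}{2R}$, with $h(X)$ the uniform exponential growth constant supplied by the open-cover hypothesis.

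Since both ingredients are already in hand, there is essentially no obstacle here — the corollary is a formal consequence, exactly parallel to how Corollary~\ref{coro:collapsing} follows from combining Theorem~\ref{theo:A} and Proposition~\ref{prop:cover} in the collapsing case. The only point worth a line of care is checking that the constant $h(X)$ produced by Proposition~\ref{prop:exp.cover} (uniform over all multiplicity-$\leq m$ covers) is indeed the same constant feeding into the non-collapsing assumption used in Theorem~\ref{theo:B}; this is immediate from the statements, so the proof is a one-sentence citation of the two results.

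\begin{proof}
By Proposition~\ref{prop:exp.cover}, since every open cover of~$X$ of multiplicity at most~$m$ has uniform exponential $\pi_1$-growth at least $h = h(X) > 0$, the simplicial complex~$X$ satisfies the non-collapsing assumption. It then follows from Theorem~\ref{theo:B} that $\omega(X) > 0$.
\end{proof}
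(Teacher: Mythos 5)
Your proof is correct and matches the paper's intended argument: the corollary is stated as the immediate counterpart of Corollary~\ref{coro:collapsing}, obtained by combining Proposition~\ref{prop:exp.cover} (to translate the open-cover hypothesis into the non-collapsing assumption) with Theorem~\ref{theo:B}. Nothing further is needed.
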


\subsection{Simplicial volume and minimal volume entropy} \label{subsec:ex}

\mbox{ } 
\medskip

In this section, we construct a sequence of connected finite simplicial complexes~$X_n$ with bounded simplicial volume and minimal volume entropy going to infinity, proving Theorem~\ref{theo:ex}.

\medskip

Let $\Sigma$ be a surface of genus~$h \geq 2$ with a disk removed.
Fix an integer $d \geq 3$.
Consider the simplicial complex~$X$ obtained by attaching~$\Sigma$ to~$S^1$ along a map $\partial \Sigma \to S^1$ of degree~$d$.
The fundamental group of~$X$ has the following presentation
\begin{equation} \label{eq:presentation}
\pi_1(X) = \langle x_1, y_1, \dots , x_h, y_h, z \vert \mathop{\prod}\limits_{i=1}^h [x_i, y_i] z^d = e \rangle.
\end{equation}
The space~$X$ admits a ${\rm CAT}(-1)$ metric and therefore is aspherical; see~\cite{bh}.
(This is also a consequence of the fact that $X$ is the cellular $2$-complex associated to the one-relator group presentation~\eqref{eq:presentation} whose relator is not a proper power; see~\cite{DV73}.)
However, one cannot directly apply Proposition~\ref{prop:hyp} to~$X$.
Indeed, a cellular cohomology computation shows that $H^2(X;\R)$ is trivial.
Observe that a similar cellular homology computation shows that 
\[
H_2(X;\Z) = 0.
\]
\forget
The fundamental group of~$X$ has the following presentation
\[
\pi_1(X) = \langle x_1, y_1, \dots , x_h, y_h, z \vert \mathop{\prod}\limits_{i=1}^h [x_i, y_i] z^d = e\rangle.
\]
Note that the cellular $2$-complex associated to this presentation is given by~$X$.
Since $\pi_1(X)$ is a one-relator group whose relator is not a proper power, its presentation $2$-complex~$X$ is aspherical; see~\cite[Theorem~2.1]{DV73}.
In particular, the simplicial complex~$X$ is essential.
A cellular homology computation shows that $H_2(X;\Z)$ is trivial.
\forgotten

To show that $X$ satisfies the non-collapsing assumption, one needs further topological insight.
Consider a simplicial map $\pi: X \to P$ to a one-dimensional simplicial complex (\ie, a graph).
The space~$X$ admits a $d$-sheeted cover $p: \widehat{X} \to X$ with 
\[
\widehat{X} = S^1 \mathop{\cup}\limits_{\psi_1} \Sigma_1 \cdots \mathop{\cup}\limits_{\psi_d} \Sigma_d
\]
where the $d$ copies~$\Sigma_i$ of~$\Sigma$ with $i=1,\dots,d$ are attached to~$S^1$ along the maps $\psi_i : \partial \Sigma_i \longrightarrow S^1$ defined as the rotations of angle~$\frac{2\pi}{d}i$.
The cover~$\widehat{X}$ contains a $\pi_1$-embedded genus~$2h$ surface~$M$.
Consider the composite map $f= \pi \circ p :M \to P$.
Proposition~\ref{prop:hyp} ensures that for some $p\in P$, the subgroup~$i_*[\pi_1(F_p)] \leqslant \pi_1(M)$ has uniform exponential growth at least~$h(\pi_1(M)) = \frac{1}{\log(3)}$. 
Thus, since $\pi_1(M)$ is a subgroup of~$\pi_1(\widehat{X})$, and so of~$\pi_1(X)$, the space~$X$ satisfies the non-collapsing assumption.

By Theorem~\ref{theo:B}, the minimal volume entropy of~$X$ is positive.
Namely,
\begin{equation} \label{eq:omegaX}
\omega(X) \geq \frac{1}{2C'_2 \log(3)} >0
\end{equation}
where~$C'_2$ is the constant involved in~\eqref{eq:diam1}.
The positivity of the minimal volume entropy of~$X$ can also be deduced from the techniques developed in the second part of the article (see Example~\ref{ex:final}) or from simplicial volume considerations (see below).

\medskip

Consider the simplicial complex~$X_n$ defined as the bouquet 
\[
X_n = \T^2 \vee \left( \bigvee_{i=1}^n X \right)
\]
of the torus with $n$ copies of~$X$.
By construction, we have
\[
H_2(X_n;\Z) = H_2(\T^2;\Z) = \Z
\]
and 
\[
\Vert X_n \Vert_\Delta  = 0.
\]
By~\cite[Theorem~2.6]{BS}, we obtain
\[
\omega(X_n)^2 = n \, \omega(X)^2.
\]
Since $\omega(X)$ is positive, see~\eqref{eq:omegaX}, we conclude that $\omega(X_n)$ goes to infinity, which proves Theorem~\ref{theo:ex}.

\begin{remark}
Applying the ``graph of groups" construction, see~\cite[\S1.B]{hatcher}, we can construct further examples of asperical simplicial complexes satisfying the non-collapsing assumption.
\end{remark}

The previous example extends in higher dimension as follows using different arguments.
Remove a ball from a closed manifold of dimension~$m=2k$ with positive simplicial volume.
The resulting space~$\Sigma$ is a manifold with boundary $\partial \Sigma \simeq S^{2k-1}$.
Fix an integer $d \geq 3$.
Denote by~$Y$ the quotient of~$\Sigma$ by the natural free action of~$\Z_d$ on~$S^{2k-1}$.
Observe that $\pi_1(Y) \simeq \pi_1(\Sigma) \ast \Z_d$ and $H_m(Y;\Z)=0$.
Define the simplicial $m$-complex
\[
X_n = \#_{i=1}^n Y_i
\]
by taking the connected sum of $n$ copies of~$Y$.
Note that $H_m(X_n;\Z)=0$.

The space~$X_n$ admits a $d$-sheeted cyclic cover which can be described as follows.
The connected sum $\#_{i=1}^n \Sigma_i$ of $n$ copies of~$\Sigma$ is a manifold whose boundary identifies with the disjoint union~$\sqcup S_i^{2k-1}$ of $n$ spheres.
Let $\widehat{X}_n$ be the space obtained by gluing $d$ copies of~$\#_{i=1}^n \Sigma_i$ along this disjoint union
\[
\widehat{X}_n = ( \sqcup S_i^{2k-1} ) \cup_{\psi_1} ( \#_{i=1}^n \Sigma_i ) \cdots \cup_{\psi_d} ( \#_{i=1}^n \Sigma_i )
\]
where the attaching maps~$\psi_j$ are given by the action of~$\alpha^j$ on the boundary components of~$\#_{i=1}^n \Sigma_i$ (for a fixed generator~$\alpha$ of~$\Z_d$).
The cover $\widehat{X}_n \to X_n$ is the natural map sending the $d$ copies $\#_{i=1}^n \Sigma_i$ to~$X_n$.
By the comparison principle, see~\cite[Lemma~4.1]{Brunnbauer08}, we have 
\begin{equation} \label{eq:d}
\omega(\widehat{X}_n) \leq d^\frac{1}{m} \, \omega(X_n).
\end{equation}

Now, take two copies $\#_{i=1}^n \Sigma_i$ and $\#_{i=1}^n \bar{\Sigma}_i$ in~$\widehat{X}_n$.
By construction, the boundaries~$\partial \Sigma_i$ and~$\partial \bar{\Sigma}_i$ agree and the union
\[
M_n = ( \#_{i=1}^n \Sigma_i ) \cup ( \#_{i=1}^n \bar{\Sigma}_i )
\]
is a closed $m$-manifold homeomorphic to
\[
M_n \simeq \#_{i=1}^n (  \Sigma_i \# \bar{\Sigma}_i ) \, \, \#_{i=1}^n ( S^1 \times S^{2k-1} ).
\]
Since the simplicial volume is additive under connected sums in dimension at least three, see~\cite{gro82}, we obtain
\[
\Vert M_n \Vert_\Delta = 2n \, \Vert \Sigma \Vert_\Delta > 0.
\]
Thus, by~\eqref{eq:gro}, the minimal volume entropy~$\omega(M_n)$ of~$M_n$ goes to infinity when $n$ or~$\Vert \Sigma \Vert_\Delta$ tend to infinity.

To conclude, consider the simplicial $m$-complex~$Z_n$ defined as the connected sum
\[
Z_n= X_n \# \T^m.
\]
Observe that $Z_n$ is a cellular $m$-complex with a single $m$-cell.
Clearly, $H_m(Z_n;\Z)=\Z$ and $\Vert Z_n \Vert_\Delta = 0$. 
Note also that $Z_n$ is not aspherical since its fundamental group has torsion.
By~\cite[Theorem~2.12]{BS} (which still holds when $M_1$, here~$X_n$, is a cellular $m$-complex with a single $m$-cell), we have $\omega(Z_n) \geq \omega(X_n)$.
Since $\pi_1(M_n)$ is a subgroup of~$\pi_1(\widehat{X}_n)$ and the manifold~$M_n$ contained in~$\widehat{X}_n$ has the same dimension~$m$ as~$\widehat{X}_n$, we deduce that $\omega(\widehat{X}_n) \geq \omega(M_n)$.
Thus, by~\eqref{eq:d}, the minimal volume entropy~$\omega(Z_n)$ of~$Z_n$ goes to infinity.

\begin{remark}
Similar examples exist in odd dimensions but their construction is more technical.
\end{remark}

\section{Margulis' constant and volume of simplicial complexes}

In this section, we introduce a generalized Margulis' constant and present topological conditions which ensure that the generalized Margulis' constant provides a curvature-free lower bound on the volume of finite simplicial complexes.

\subsection{Tamed loop space}

\mbox{ }

\medskip

In order to present our criterion, we need to introduce some definitions related to the topology of the manifold and its loop space.

\begin{definition}
Let $K$ be a connected simplicial complex.
For every homotopy class $c \in \pi_1(K)$, let $\hat{c}$ denote the corresponding conjugacy class.
Consider the connected component
\[
\Lambda_{\hat{c}}(K) = \{ \gamma:S^1 \to K \mid \gamma \mbox{ is freely homotopic to a loop representing } \hat{c} \}
\]
of the free loop space~$\Lambda(K)$ of~$K$ containing~$\hat{c}$.
For every subgroup~$H \leqslant \pi_1(K)$, define also
\[
\Lambda_H(K) =\coprod_{c \in H \setminus \{e\}} \Lambda_{\hat{c}}(K).
\]
When $H=\pi_1(K)$, we simply denote this space by $\Lambda_*(K)=\Lambda(K) \setminus \Lambda_{\hat{e}}(K)$, that is, the space of noncontractible loops on~$K$.
\end{definition}

The following notion plays a fundamental role in our approach.

\begin{definition} \label{def:tamed}
The free loop space~$\Lambda(K)$ of a connected simplicial complex~$K$ is \emph{$m$-tamed} if there exists a homotopy 
\begin{equation} \label{eq:flow}
\varphi_t: \Lambda(K) \to \Lambda(K)
\end{equation}
of the identity map on~$\Lambda(K)$ which satisfies the following: for every finitely generated subgroup~$H \leqslant \pi_1(K)$ of subexponential growth, see Definition~\ref{def:algent}, there exists a simplicial subcomplex~$Z_H \subseteq K$ of dimension at most~$m-1$ such that the restriction
\[
\varphi_t: \Lambda_H(K) \to \Lambda_H(K)
\]
is a retraction by deformation of~$\Lambda_H(K)$ onto~$\Lambda_*(Z_H)$, more specifically, onto~$i_*(\Lambda_*(Z_H))$, where $i_*:\Lambda(Z_H) \hookrightarrow \Lambda(K)$ is the map between the free loop spaces induced by the inclusion map $i: Z_H \hookrightarrow K$.

Loosely speaking, this property means that there is a deformation of the free loop space~$\Lambda(K)$ where all the noncontractible loops representing elements in any finitely generated subgroup~$H$ with subexponential growth are deformed into loops lying in a subcomplex~$Z_H \subseteq K$ of dimension at most~$m-1$.
\end{definition}

Examples of manifolds with and without a tamed loop space are presented below.

\begin{example} \label{ex:CAT(-1)}
Every aspherical simplicial $m$-complex~$X$ such that the centralizer of every nontrivial homotopy class in~$\pi_1(X)$ is an infinite cyclic subgroup has an $m$-tamed loop space; see~\cite{hansen} or~\cite[Proposition~3.6]{sab17}.
In particular, every aspherical simplicial $m$-complex with a (torsion-free) Gromov hyperbolic fundamental group has an $m$-tamed loop space.
For instance, every finite simplicial $m$-complex with a ${\rm CAT}(-1)$ metric (\eg, the simplicial complexes constructed in Section~\ref{subsec:ex}) has an $m$-tamed loop space.
\end{example}

\begin{example}
On the other hand, tori~$\T^m$ and real projective spaces~$\R P^n$ do not have $m$-tamed loop spaces.
\end{example}

In order to present other examples, we first need to introduce a definition.

\begin{definition}
By~\cite{rham}, a simply connected complete Riemannian manifold of nonpositive curvature admits a canonical isometric splitting $M=N \times \R^n$, where $N$ cannot be decomposed as a Riemannian product with a Euclidean factor.
The factor~$\R^n$ is called the \emph{Euclidean de Rham factor} of~$M$.
\end{definition}

Further examples of manifolds with a tamed loop space are given by the following proposition.

\begin{proposition} \label{prop:tamed}
Let $M$ be a closed nonpositively curved $m$-manifold whose Euclidean de Rham factor of the universal cover is trivial and whose fundamental group satisfies the Tits alternative\footnote{We can replace the Tits alternative in Proposition~\ref{prop:tamed} by the weaker condition ``every finitely generated subgroup of~$\pi_1(M)$ is virtually solvable or has exponential growth". \label{foot}} (\ie, every finitely generated subgroup of~$\pi_1(M)$ is virtually solvable or contains a free group of rank two).
Then $M$ has an $m$-tamed loop space.
\forget
The following $m$-manifolds have an $m$-tamed loop space:
\begin{enumerate}
\item closed manifolds with nonpositive curvature such as graph manifolds; \label{ex3}
\item manifolds with fundamental group of subexponential growth (e.g., simply connected manifolds, nil-manifolds) times any of these previous manifolds.
\end{enumerate}
\forgotten
\end{proposition}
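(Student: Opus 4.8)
The strategy is to build the deformation $\varphi_t$ of $\Lambda(M)$ geometrically, using the nonpositively curved metric, and then to identify, for each finitely generated subgroup $H \leqslant \pi_1(M)$ of subexponential growth, the subcomplex $Z_H$ onto which $\Lambda_H(M)$ retracts. The natural candidate for $\varphi_t$ is the classical ``curve-shortening'' deformation: on each free homotopy class $\hat c$ with $c \neq e$, every free loop is homotoped to the unique closed geodesic in that class (recall that in a closed nonpositively curved manifold each nontrivial free homotopy class contains a closed geodesic, unique up to parametrization if the metric is negatively curved, but in general forming a flat totally geodesic immersed submanifold---the ``minimal set'' of the isometry). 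One must be careful that this deformation is continuous on the whole of $\Lambda_*(M)$; here I would invoke the existence of such a deformation retraction as in~\cite{hansen} and~\cite[Proposition~3.6]{sab17}, or a discrete approximation by geodesic broken-line shortening on a fine subdivision, exactly as in Example~\ref{ex:CAT(-1)}. The output of the deformation on $\Lambda_{\hat c}(M)$ lands in the set of parametrized geodesics freely homotopic to $\hat c$, which is a finite-dimensional manifold covered (via the $\pi_1$ action) by the Euclidean factor of the splitting $\widetilde M = N \times \R^n$ restricted to the invariant flat of the corresponding isometry.

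The key algebraic input is the Tits-alternative hypothesis (or its weakening in the footnote): a finitely generated subgroup $H \leqslant \pi_1(M)$ of subexponential growth is, in particular, amenable, hence cannot contain a free group of rank two, so by the Tits alternative $H$ is virtually solvable; being a subgroup of the fundamental group of a nonpositively curved closed manifold with trivial Euclidean de Rham factor, $H$ is then virtually abelian, and its translation action on $\widetilde M$ stabilizes a flat $F_H \cong \R^{k}$ of dimension $k = \operatorname{rank}(H) \leqslant m$ on which $H$ acts by translations with compact quotient. Crucially, because the Euclidean de Rham factor of $\widetilde M$ is trivial, this invariant flat $F_H$ cannot be a ``global'' parallel factor; a Flat Torus Theorem / Bieberbach argument (see~\cite{rham} and standard nonpositive-curvature theory) shows that every nontrivial element of $\pi_1(M)$---in particular of $H$---has its axis contained in a flat of dimension $\leqslant m-1$, since a flat of full dimension $m$ would split off a Euclidean factor. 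Thus the union of the closed geodesics representing conjugacy classes of elements of $H$ projects into a subset of $M$ which, up to the homotopy $\varphi_t$ and a further isotopy of $M$, can be pushed into the $(m-1)$-skeleton $Z_H$ of a suitable subdivision of $M$. Making this "pushing into a lower-dimensional subcomplex" precise---i.e. producing a single simplicial $Z_H \subseteq M$ of dimension $\leqslant m-1$ that simultaneously carries all the geodesics associated to $H$ and is compatible with the global deformation $\varphi_t$---is the step I expect to be the main obstacle, and it is where the triviality of the Euclidean de Rham factor is genuinely used: it guarantees a uniform bound of $m-1$ on the dimension of the flats involved.

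Concretely, I would proceed in the following order. (1) Construct the global deformation $\varphi_t:\Lambda(M)\to\Lambda(M)$ of the identity that retracts $\Lambda_*(M)$ onto the space of closed geodesics, citing~\cite{hansen,sab17} for continuity. (2) Fix a finitely generated $H \leqslant \pi_1(M)$ of subexponential growth; show $H$ is virtually abelian using amenability $+$ Tits alternative $+$ the structure theory of subgroups of $\pi_1$ of nonpositively curved manifolds. (3) Invoke the Flat Torus Theorem to realize the invariant flat(s) of $H$ inside $\widetilde M$, and use triviality of the Euclidean de Rham factor to conclude each such flat has dimension $\leqslant m-1$, hence the closed geodesics representing conjugacy classes in $H$ lie in a compact subset $Y_H\subseteq M$ that deformation-retracts (within $M$) onto a subcomplex of dimension $\leqslant m-1$. (4) Set $Z_H$ to be that subcomplex (after a subdivision of $M$ making it simplicial), and check that $\varphi_t$ restricted to $\Lambda_H(M)$ is a deformation retraction onto $i_*(\Lambda_*(Z_H))$, which amounts to verifying that $\varphi_1(\Lambda_H(M)) \subseteq i_*(\Lambda_*(Z_H))$ and that $\varphi_t$ preserves $\Lambda_H(M)$ for all $t$ (immediate, since $\varphi_t$ preserves free homotopy classes). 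This yields Definition~\ref{def:tamed} and hence that $M$ has an $m$-tamed loop space.
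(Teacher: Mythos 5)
Your plan follows the same route as the paper's proof: Birkhoff curve shortening onto closed geodesics (the paper cites \cite{bow}), the chain ``subexponential growth $\Rightarrow$ amenable $\Rightarrow$ virtually solvable $\Rightarrow$ virtually abelian'' via the Tits alternative and the Solvable Subgroup Theorem of \cite{bh}, the Flat Torus Theorem, and finally the triviality of the Euclidean de Rham factor to confine the relevant geodesics to a subcomplex of dimension at most $m-1$.

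The gap sits exactly at the step you flag as the main obstacle, and the justification you offer there does not work. You argue that each axis of an element of $H$ lies in a flat of dimension at most $m-1$ and deduce that the union of the relevant closed geodesics can be pushed into the $(m-1)$-skeleton. That dimension bound is vacuous (every axis is itself a $1$-flat) and controls nothing: the set swept out by the closed geodesics in classes of $H$ is the projection of the minimal set ${\rm Min}(A)$ of a finite-index free abelian subgroup $A\cong\Z^k\le H$, which by the Flat Torus Theorem splits as $C\times\R^k$ with $C$ a closed convex subset of possibly large dimension; bounding the dimension of individual flats says nothing about $\dim(C\times\R^k)$, nor about whether its projection misses an open subset of~$M$. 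The correct use of the hypothesis --- and the paper's argument --- is global rather than dimensional: if ${\rm Min}(A)=C\times\R^k$ were all of $\widetilde M$, then $\widetilde M$ would split off a Euclidean factor $\R^k$ with $k\ge 1$, contradicting the triviality of the Euclidean de Rham factor; hence ${\rm Min}(A)\ne\widetilde M$, its projection $\pi(C\times\R^k)$ does not cover~$M$, and a compact subset of $M$ avoiding an open set can be deformation retracted onto a subcomplex $Z_H$ of the $(m-1)$-skeleton. Composing the Birkhoff flow with this retraction $\theta_t$ gives the tameness; the bookkeeping in your step (4) then goes through as in the paper.
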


\begin{proof}
Fix a triangulation on~$M$.
Denote by $\pi: \tilde{M} \to M$ its universal covering.
The fundamental group~$\pi_1(M)$ acts properly and cocompactly by isometries on the universal cover~$\tilde{M}$.
Let $H \leqslant \pi_1(M)$ be a finitely generated subgroup of subexponential growth.
By the Tits alternative and the Solvable Subgroup Theorem~\cite[\S7,8]{bh}, the subgroup~$H$ is virtually abelian.
Since $\pi_1(M)$ is finitely generated, it only contains finitely many subgroups of a given index.
The (finite) intersection of all free abelian subgroups of minimal index in~$H$ is a normal subgroup~$A$ of finite index in~$H$ isometric to~$\Z^k$.
Denote by ${\rm Min}(A)$ the set of points in~$\tilde{M}$ which are moved the minimal distance by every element of~$A$.
By the Flat Torus Theorem~\cite[\S7.1]{bh}, the subset ${\rm Min}(A)$ splits as $C \times \R^k$, where $C$ is a closed convex subset of~$\tilde{M}$.
Since the Euclidean de Rham factor of~$\tilde{M}$ is trivial, the subset $C \times \R^k$ is different from~$\tilde{M}$ and its projection $\pi(C \times \R^k) \subseteq M$ does not cover~$M$.
Therefore, there exists a retraction by deformation~$\theta_t$ of~$\pi(C \times \R^k)$ onto a simplicial subcomplex~$Z_H \subseteq M$ lying in the $(m-1)$-skeleton of~$M$.
Now, by~\cite[Proposition~3.7.1]{bow}, every loop~$\gamma$ in a closed nonpositively curved manifold converges to a closed geodesic~$\gamma_\infty$ under the Birkhoff curve shortening flow.
For a loop~$\gamma$ freely homotopic to a loop representing a nontrivial element of~$H$, the closed geodesic~$\gamma_\infty$ lies in $\pi(C \times \R^k)$.
Applying the deformation~$\theta_t$ to~$\gamma_\infty$, we deform~$\gamma_\infty$ to a loop lying in~$Z_H$.
This gives rise to the desired retraction of~$\Lambda_H(M)$ onto~$\Lambda_*(Z_H)$
\end{proof}

\begin{remark}
The hypothesis of Proposition~\ref{prop:tamed} are stable by taking the product.
More precisely, the product of two closed manifolds satisfying the hypothesis of Proposition~\ref{prop:tamed} also satisfies these hypothesis.
\end{remark}

We immediately deduce the following corollary.

\begin{corollary} \label{coro:neg}
Suppose $M$ is a closed negatively curved $m$-manifold or a closed quotient of an irreducible higher rank symmetric $m$-space of noncompact type.
Then $M$ has an $m$-tamed loop space.
\end{corollary}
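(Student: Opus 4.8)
The plan is to deduce Corollary~\ref{coro:neg} directly from Proposition~\ref{prop:tamed}, so the work is essentially to verify that the two classes of manifolds listed satisfy the hypotheses of that proposition: they are closed, nonpositively curved, the Euclidean de Rham factor of the universal cover is trivial, and the fundamental group satisfies the Tits alternative (or the weaker condition in footnote~\ref{foot}).

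First I would treat the negatively curved case. If $M$ is a closed negatively curved $m$-manifold, then it is certainly nonpositively curved, and its universal cover $\tilde M$ is a Hadamard manifold with strictly negative sectional curvature. A Euclidean factor $\R^n$ with $n\geq 1$ would be a totally geodesic flat submanifold, contradicting strict negativity of the curvature; hence the Euclidean de Rham factor is trivial. For the Tits alternative: $\pi_1(M)$ is a (torsion-free) Gromov hyperbolic group, and it is classical that any finitely generated subgroup of a hyperbolic group is either virtually cyclic — in particular virtually solvable — or contains a free group of rank two (so in either case the weaker condition of footnote~\ref{foot} holds, since a non-virtually-solvable subgroup then has exponential growth). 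Thus Proposition~\ref{prop:tamed} applies and $M$ has an $m$-tamed loop space. (Alternatively, one could invoke Example~\ref{ex:CAT(-1)} together with the fact that a closed negatively curved manifold is aspherical with Gromov hyperbolic fundamental group; I would mention this as a shortcut.)

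Next I would handle the higher rank case: let $M = \Gamma\backslash S$ be a closed quotient of an irreducible symmetric space $S$ of noncompact type and higher rank, with $\Gamma = \pi_1(M)$ a cocompact lattice. Such $S$ carries a nonpositively curved symmetric metric, so $M$ is a closed nonpositively curved manifold. Irreducibility of $S$ means it has no Euclidean de Rham factor (a symmetric space of noncompact type has no Euclidean factor at all), so that hypothesis is satisfied. The key point is the Tits alternative for $\Gamma$: since $\Gamma$ is a finitely generated linear group (a lattice in a semisimple Lie group, hence a subgroup of $GL_n(\R)$), the classical Tits alternative~\cite{tits} guarantees that every finitely generated subgroup of $\Gamma$ is either virtually solvable or contains a free group of rank two. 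Hence Proposition~\ref{prop:tamed} applies again and $M$ has an $m$-tamed loop space.

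The main obstacle, such as it is, is not in the two verifications above — each is a standard fact — but rather in making sure the statement of Proposition~\ref{prop:tamed} is literally applicable: one must confirm that ``closed nonpositively curved'' is understood in the Riemannian-metric sense used there, and that the higher rank symmetric space case really does have a genuinely nonpositively curved (rather than merely $\mathrm{CAT}(0)$ in a coarser sense) smooth metric, which it does since the symmetric metric is smooth of nonpositive sectional curvature. I do not expect any delicate computation; the proof is a two-line citation once these hypotheses are matched. I would therefore write the proof simply as: ``In the negatively curved case, $M$ is aspherical with torsion-free Gromov hyperbolic fundamental group, so Example~\ref{ex:CAT(-1)} applies; in the higher rank case, $M$ is closed, nonpositively curved, has trivial Euclidean de Rham factor by irreducibility, and $\pi_1(M)$ is linear hence satisfies the Tits alternative, so Proposition~\ref{prop:tamed} applies.''
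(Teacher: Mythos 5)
Your proposal is correct and follows essentially the same route as the paper: both cases are reduced to Proposition~\ref{prop:tamed} by checking nonpositive curvature, triviality of the Euclidean de Rham factor, and the Tits alternative (via hyperbolicity of $\pi_1(M)$ in the negatively curved case — the paper cites \cite{BCG11} here — and via linearity and \cite{tits} in the higher rank case). The shortcut through Example~\ref{ex:CAT(-1)} for the negatively curved case is also valid.
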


\begin{proof}
Let $M$ be a closed negatively curved manifold.
Clearly, the Euclidean de Rham factor of the universal cover of~$M$ is trivial.
By~\cite{BCG11} (see the footnote~\ref{foot} or apply~\cite{BF}), the fundamental group of~$M$ satisfies the Tits alternative.
Thus, we can apply Proposition~\ref{prop:tamed}.

Let $M$ be a closed quotient of an irreducible higher rank symmetric $m$-space of noncompact type.
Every locally symmetric space of noncompact type has nonpositive curvature.
Furthermore, the Euclidean de Rham factor of the universal cover of~$M$ is trivial since it is irreducible of higher rank.
Now, the fundamental group~$\pi_1(M)$ is a finitely generated linear group.
Hence it satisfies the Tits alternative~\cite{tits}.
Thus, we can apply Proposition~\ref{prop:tamed} in this case too.
\end{proof}

We will need the following proposition regarding the topological properties captured by the subcomplex~$Z_H$ in a simplicial complex with a tamed loop space.

\begin{proposition} \label{prop:inj}
Let $K$ be a simplicial complex whose loop space is $m$-tamed.
Let $H \leqslant \pi_1(K)$ be a finitely generated subgroup of subexponential growth.
Then the following properties hold.
\begin{enumerate}
\item The inclusion map $i: Z_H \hookrightarrow K$  is $\pi_1$-injective. \label{isom}
\item Let $H,H' \leqslant \pi_1(K)$ be two finitely generated subgroups of subexponential growth with $H \leqslant H'$. 
Then $Z_H \subseteq Z_{H'}$. \label{Z}
\item If $K$ is aspherical then $Z_H$ is aspherical. \label{asph}
\item Let $K_H$ be the covering of~$K$ with $\pi_1(K_H)=\pi_1(Z_H)$.
Attach the cone~$\Cone(Z_H)$ to~$K_H$ along a lift of~$Z_H$.
If $K$ is aspherical then $K_H \cup \Cone(Z_H)$ is contractible. \label{contractible}
\end{enumerate}
\end{proposition}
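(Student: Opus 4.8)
The plan is to establish the four assertions in order, each one feeding into the next. Throughout I write $i\colon Z_H\hookrightarrow K$ for the inclusion, I let $\varphi_t$ be the homotopy supplied by the $m$-tamedness of~$\Lambda(K)$, and I use that $\varphi_t$ restricts to a deformation retraction of~$\Lambda_H(K)$ onto~$i_*\bigl(\Lambda_*(Z_H)\bigr)$; in particular $i_*\bigl(\Lambda_*(Z_H)\bigr)\subseteq\Lambda_H(K)\subseteq\Lambda(K)\setminus\Lambda_{\hat e}(K)$, and $\varphi_1$ carries $\Lambda_H(K)$ onto $i_*\bigl(\Lambda_*(Z_H)\bigr)$ while fixing it pointwise. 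I assume $Z_H$ is connected, and I use that, $i$ being the inclusion of a subcomplex, the induced map $i_*\colon\Lambda(Z_H)\to\Lambda(K)$ is an embedding, so $\Lambda_*(Z_H)$ is homeomorphic to $i_*\bigl(\Lambda_*(Z_H)\bigr)$. For \ref{isom}: if $\gamma$ is a loop of $Z_H$ that is noncontractible in $Z_H$, then $[\gamma]$ is a point of $\Lambda_*(Z_H)$, so $i\circ\gamma$ represents a point of $i_*\bigl(\Lambda_*(Z_H)\bigr)\subseteq\Lambda(K)\setminus\Lambda_{\hat e}(K)$ and is therefore not nullhomotopic in $K$; this is exactly the $\pi_1$-injectivity of $i$. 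For \ref{Z}: from $H\le H'$ one gets $\Lambda_H(K)\subseteq\Lambda_{H'}(K)$, so applying the single map $\varphi_1$,
\[
i_*\bigl(\Lambda_*(Z_H)\bigr)=\varphi_1\bigl(\Lambda_H(K)\bigr)\subseteq\varphi_1\bigl(\Lambda_{H'}(K)\bigr)=i_*\bigl(\Lambda_*(Z_{H'})\bigr);
\]
thus every loop of $Z_H$, viewed in $K$, coincides with a loop of $Z_{H'}$ and so has image in $Z_H\cap Z_{H'}$, and since $Z_H$ is connected with nontrivial fundamental group every point of it lies on a noncontractible loop (attach a back-and-forth spur to a fixed noncontractible loop), whence $Z_H\subseteq Z_{H'}$.

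The heart of the matter is \ref{asph}, which I prove via the evaluation fibrations. First, $\Lambda_*(Z_H)\cong i_*\bigl(\Lambda_*(Z_H)\bigr)$ is a deformation retract of $\Lambda_H(K)$; and when $K$ is aspherical the fibre of $\Omega K\to\Lambda K\xrightarrow{\mathrm{ev}}K$ has contractible components (each is homotopy equivalent to $\Omega\widetilde K\simeq\ast$), so $\Lambda K\to K$ is, componentwise and up to homotopy, a covering of $K$; hence every component of $\Lambda K$, of $\Lambda_H(K)$, and of $\Lambda_*(Z_H)$ is aspherical. Now fix a nontrivial $c\in\pi_1(Z_H)$ (it exists since $\pi_1(Z_H)\ne1$) and consider $F\to\Lambda_{\hat c}(Z_H)\xrightarrow{\mathrm{ev}}Z_H$, whose fibre $F$ over the basepoint contains the component $\Omega_c(Z_H)$ of based loops representing $c$, with $\pi_k\bigl(\Omega_c(Z_H)\bigr)\cong\pi_{k+1}(Z_H)$ for $k\ge1$ (all components of $\Omega Z_H$ are homotopy equivalent to $\Omega\widetilde{Z_H}$). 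Since $\pi_2\bigl(\Lambda_{\hat c}(Z_H)\bigr)=0$, the fibre-inclusion $\pi_1(F)\to\pi_1\bigl(\Lambda_{\hat c}(Z_H)\bigr)$ is injective. On the other hand $\mathrm{ev}\circ i_*=i\circ\mathrm{ev}$ on free loop spaces, and $i_*\colon\Lambda_{\hat c}(Z_H)\to\Lambda_{\widehat{i_*c}}(K)$ is a homotopy equivalence, being the inclusion of a deformation retract of that component of $\Lambda_H(K)$ (the deformation $\varphi_t$ respects connected components); therefore on $\pi_1$ the evaluation $\pi_1\bigl(\Lambda_{\hat c}(Z_H)\bigr)\to\pi_1(Z_H)$, postcomposed with the injective map $i_*\colon\pi_1(Z_H)\to\pi_1(K)$ of \ref{isom}, equals the injective map $\pi_1\bigl(\Lambda_{\widehat{i_*c}}(K)\bigr)\to\pi_1(K)$ (injective because $\pi_2(K)=0$) precomposed with an isomorphism, and so is itself injective. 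Hence $\ker\bigl(\mathrm{ev}_*\colon\pi_1(\Lambda_{\hat c}(Z_H))\to\pi_1(Z_H)\bigr)=0$; but this kernel is the image of the fibre-inclusion $\pi_1(F)\to\pi_1\bigl(\Lambda_{\hat c}(Z_H)\bigr)$, which is therefore zero, and being also injective it forces $\pi_1(F)=0$, i.e.\ $\pi_2(Z_H)=0$. Feeding $\pi_k\bigl(\Lambda_{\hat c}(Z_H)\bigr)=0$ for $k\ge2$ into the higher terms of the same long exact sequence yields $\pi_k(Z_H)=0$ for all $k\ge2$, so $Z_H$ is aspherical.

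For \ref{contractible}: by \ref{isom} the inclusion $i$ lifts to an embedding $\widetilde\imath\colon Z_H\hookrightarrow K_H$ inducing an isomorphism $\pi_1(Z_H)\xrightarrow{\ \sim\ }\pi_1(K_H)$ — injective by \ref{isom}, and surjective because $\pi_1(K_H)$ is by construction the image of $\pi_1(Z_H)$. When $K$ is aspherical so are $K_H$, a cover of $K$, and $Z_H$, by \ref{asph}; thus $\widetilde\imath$ is a $\pi_1$-isomorphism between aspherical complexes, hence a homotopy equivalence, so $\widetilde\imath_*\colon H_*(Z_H)\to H_*(K_H)$ is an isomorphism and the mapping cone $K_H\cup\Cone(Z_H)$ has trivial reduced homology. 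It is simply connected by the Seifert--van Kampen theorem, since $\pi_1(Z_H)\to\pi_1(K_H)$ is onto, and hence contractible by the theorems of Hurewicz and Whitehead. The step I expect to be the real obstacle is \ref{asph}: asphericity of $\Lambda_*(Z_H)$ by itself does \emph{not} imply asphericity of $Z_H$ — there exist non-aspherical complexes with aspherical free loop space — so one has to genuinely exploit the aspherical ambient complex~$K$, namely that $\pi_2(K)=0$ makes the evaluation map $\pi_1$-injective on the pertinent components of the loop space, which is precisely what annihilates $\pi_2(Z_H)$; once that is done the vanishing of the higher homotopy groups is automatic.
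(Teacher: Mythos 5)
Your treatments of parts (1), (2) and (4) are correct and essentially follow the paper's route: (1) is a direct unwinding of the containment $i_*(\Lambda_*(Z_H))\subseteq\Lambda_H(K)$ forced by the definition of a deformation retraction, (2) is the same "$\varphi_1$ fixes $i_*(\Lambda_*(Z_H))$ and maps $\Lambda_{H'}(K)$ onto $i_*(\Lambda_*(Z_{H'}))$" argument as in the paper, and (4) differs only cosmetically (the paper deformation-retracts $K_H$ onto a lift of $Z_H$ by Whitehead and then collapses the cone, rather than passing through acyclicity and simple connectivity of the mapping cone). The gap is in part (3), which you correctly single out as the crux. The step ``since $\pi_2\bigl(\Lambda_{\hat c}(Z_H)\bigr)=0$, the fibre-inclusion $\pi_1(F)\to\pi_1\bigl(\Lambda_{\hat c}(Z_H)\bigr)$ is injective'' is not what the long exact sequence gives: vanishing of $\pi_2$ of the \emph{total space} yields injectivity of the connecting map $\partial\colon\pi_2(Z_H)\to\pi_1(F)$, whereas injectivity of the fibre inclusion is equivalent to $\operatorname{im}\partial=0$, which (given that $\partial$ is injective) is exactly the statement $\pi_2(Z_H)=0$ you are trying to prove. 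There is no independent reason for it: for free-loop fibrations the boundary map on the component $\Lambda_{\hat c}$ is $1-c_*$ on $\pi_2$, which is often injective with nonzero image, so fibre inclusions of free-loop fibrations are not $\pi_1$-injective in general. Once this step is removed, your exact sequence only produces $\pi_2(Z_H)\cong\operatorname{im}\partial=\ker\iota_*=\pi_1(F)\cong\pi_2(Z_H)$, a tautology; and this is unavoidable for any argument using only the absolute homotopy groups in the evaluation fibration over $Z_H$, because its fibre is componentwise $\Omega Z_H$, whose $\pi_k$ \emph{is} $\pi_{k+1}(Z_H)$.

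What is missing is a relative, pointwise use of the deformation retraction, which is how the paper proceeds. Given $f\colon S^i\to Z_H$ with $i\ge2$, one sweeps out $f(S^i)$ by the loops $\alpha\cup\alpha_u$ ($\alpha_u$ the images of the meridians, $\alpha$ an arc in $Z_H$ chosen so that these loops are noncontractible, hence fixed by $\varphi_t$); a null-homotopy of $f$ in the aspherical complex $K$ then produces a disk of loops $B^i\to\Lambda_{\hat c}(K)$ whose boundary already lies in $i_*(\Lambda_*(Z_H))$; applying $\varphi_t$ pushes this disk into $i_*(\Lambda_*(Z_H))$ rel its boundary --- this uses the vanishing of the \emph{relative} homotopy $\pi_*\bigl(\Lambda_{\hat c}(K),i_*(\Lambda_{\hat c'}(Z_H))\bigr)$, which the absolute long exact sequence does not see --- and the resulting map $S^1\times B^i\to Z_H$ yields a null-homotopy of $f$ inside $Z_H$. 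The facts you do establish correctly (that $i_*\colon\Lambda_{\hat c'}(Z_H)\to\Lambda_{\hat c}(K)$ is a homotopy equivalence and that $\mathrm{ev}_*$ is injective on $\pi_1$) are not sufficient by themselves; you would need to reinstate some version of this relative argument to close the gap.
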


\begin{proof}
\mbox{ }

\eqref{isom}.
Let $\gamma_1$ and~$\gamma_2$ be two noncontractible loops of~$Z_H$ which are homotopic in~$K$.
That is, there is a homotopy~$\gamma_s$ of loops of~$K$ between~$\gamma_1$ and~$\gamma_2$.
For every $t$, the deformation~$\varphi_t(\gamma_s)$ is a homotopy from~$\gamma_1$ to~$\gamma_2$.
By assumption on~$\varphi_t$, this homotopy lies in~$Z_H$ for $t=1$.
Thus, $\gamma_1$ and~$\gamma_2$ are homotopic in~$Z_H$ through $\varphi_1(\gamma_s)$.
Therefore, the inclusion map $i: Z_H \hookrightarrow K$ is $\pi_1$-injective.

\eqref{Z}. Let $x \in Z_H$.
Choose a loop~$\gamma$ of~$Z_H$ based at~$x$ representing a nontrivial class of~$H$.
Since $\gamma$ lies in~$Z_H$, the deformation~$\varphi_t:\Lambda_H(K) \to \Lambda_H(K)$ leaves the loop~$\gamma$ fixed.
Since $H \leqslant H'$, the loop~$\gamma$ also lies in~$\Lambda_{H'}(K)$ and the map $\varphi_t: \Lambda_{H'}(K) \to \Lambda_{H'}(K)$ deforms~$\gamma$ to a loop of~$Z_{H'}$.
Thus, $\gamma$ lies in~$Z_{H'}$ so does~$x$.
That is, $Z_H \subseteq Z_{H'}$.

\eqref{asph}.
Let $f:S^i \to Z_H$ with $i \geq 2$.
Denote by $\alpha_u$ the image by~$f$ of the meridian joining the two poles of~$S^i$ and passing through $u \in S^{i-1}$.
Connect the image of these two poles by an arc $\alpha \subseteq Z_H$ such that the loop $\alpha \cup \alpha_u \subseteq Z_H$ is noncontractible in~$Z_H$ (and so in~$K$ by~\eqref{isom}).
Note that the loops $\alpha \cup \alpha_u$ and so the image of~$S^i$ remain fixed under deformation by~$\varphi_t$.
By asphericity of~$K$, there is a homotopy $f_s:S^i \to K$ from~$f$ to a constant map.
Denote by $\alpha_{u,s}$ the image by~$f_s$ of the meridians of~$S^i$.
The endpoints of~$\alpha$ evolve along~$f_s$ and the paths followed by these endpoints, along with~$\alpha$, give rise to a deformation~$\alpha_s$ of~$\alpha$.
The closed curves $\alpha_s \cup \alpha_{u,s}$, with $u \in S^{i-1}$ and $s \in [0,1]$, define a map $S^1 \times B^i \to M$ which takes the soul of $S^1 \times B^i$ to the loop~$\alpha_1$ and whose restriction to~$S^1 \times \partial B^i$ lies in~$Z_H$.
Applying the deformation retraction $\varphi_t:\Lambda_H(K) \to \Lambda_H(K)$, we can assume that the image of the map $S^1 \times B^i \to K$ lies in~$Z_H$.
Using this map, we can construct an extension of $f:S^i \to Z_H$ to~$B^i$, which shows that $Z_H$ is aspherical.

\eqref{contractible} 
By Proposition~\ref{prop:inj}.\eqref{asph}, both $K_H$ and~$Z_H$ are Eilenberg-MacLane spaces with fundamental group~$\pi_1(Z_H)$.
By Whitehead's theorem, see~\cite[Theorem~4.5]{hatcher}, the space~$K_H$ retracts by deformation onto~$Z_H$.
Therefore, the space $K_H \cup \Cone(Z_H)$ obtained by attaching~$\Cone(Z_H)$ to~$K_H$ along a lift of~$Z_H$ is contractible.
\end{proof}

\subsection{Filling invariants}

\mbox{ }

\medskip

Filling invariants were introduced by M.~Gromov in~\cite{gro83} to establish systolic inequalities on essential simplicial complexes.

\begin{definition} \label{def:FR}
Let $X$ be a compact space with a metric~$d$.
The map 
\[
i:(X,d) \hookrightarrow  (C^0(X),|| \cdot ||_\infty)
\]
defined by $i(x)( \cdot ) = d(x, \cdot)$ is an embedding from the metric space~$X$ into the Banach space $C^0(X)$ of continuous functions on~$X$ endowed with the sup-norm~$|| \cdot ||_\infty$.
This natural embedding, also called the Kuratowski embedding, is distance-preserving.
We will consider $X$ isometrically embedded into~$C^0(X)$.

The \emph{$(m-1)$-radius} of the metric space~$X$, denoted by $\Rad_{m-1}(X)$, is the infimum of the positive reals~$\rho$ such that there exists a continuous map
\[
j:X \stackrel{h}{\longrightarrow} P \to C^0(X)
\] 
which factors out through a simplicial $(m-1)$-complex~$P$ such that
\[
d_{C^0}(i(x),j(x)) < \rho
\]
where $i:X \hookrightarrow  C^0(X)$ is the Kuratowski isometric embedding; see~\cite[Appendix~1.(B)]{gro83}.
\end{definition}

By~\cite[Appendix~1.(D)]{gro83}, the $(m-1)$-radius of a finite simplicial $m$-complex~$X$ with a piecewise Riemannian metric agrees with half its $(m-1)$-Urysohn width; see Definition~\ref{def:UW}.
That is,
\[
\Rad_{m-1}(X) = \tfrac{1}{2} \, \UW_{m-1}(X).
\]
By Theorem~\ref{theo:width}, the $(m-1)$-radius of~$X$ yields a lower bound on its volume, providing a positive answer to~\cite[Appendix~1.(C)]{gro83}.

\begin{theorem}[\cite{Guth17}, \cite{LLNR}, \cite{Pap}, \cite{Nab}] \label{theo:GG}
Let $X$ be a connected finite simplicial $m$-complex with a piecewise Riemannian metric.
Then
\[
\vol (X) \geq c_m \, \Rad_{m-1}(X)^m
\]
where $c_m$ is an explicit positive constant depending only on~$m$. 
More precisely, if for some $R>0$ every ball~$B(R) \subseteq X$ of radius~$R$ has volume at most~$c_m \, R^m$, then
\[
\Rad_{m-1}(X) \leq R.
\]
\end{theorem}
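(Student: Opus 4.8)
The plan is to derive Theorem~\ref{theo:GG} directly from Theorem~\ref{theo:width} (the Liokumovich--Lishak--Nabutovsky--Rotman width inequality already stated) using only the identity $\Rad_{m-1}(X)=\tfrac12\,\UW_{m-1}(X)$ that was recalled just above. The two theorems are really two packagings of the same estimate, so the proof should be short: the $(m-1)$-radius is nothing but half the Urysohn width, and both the qualitative bound and its quantitative (ball-volume) refinement transfer verbatim.

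First I would justify the identity $\Rad_{m-1}(X)=\tfrac12\,\UW_{m-1}(X)$ by citing \cite[Appendix~1.(D)]{gro83}, or sketch it: given a map $\pi:X\to P$ to a simplicial $(m-1)$-complex with all fibers of diameter at most $\delta=\UW_{m-1}(X)+\varepsilon$, one composes $\pi$ with a map $P\to C^0(X)$ sending a simplex spanned by vertices $p_0,\dots,p_k$ affinely into the convex hull in $C^0(X)$ of chosen points $i(x_{j})$ with $x_j\in\pi^{-1}(p_j)$; since the $x_j$ all lie within a ball of radius $\delta$ (they map into nearby fibers under a simplicial map), this produces a factorization $X\to P\to C^0(X)$ that stays within $\tfrac{\delta}{2}+\varepsilon'$ of the Kuratowski embedding, giving $\Rad_{m-1}(X)\le\tfrac12\UW_{m-1}(X)$. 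The reverse inequality is analogous: from a factorization $j:X\to P\to C^0(X)$ with $d_{C^0}(i(x),j(x))<\rho$ one gets that the composite $X\to P$ has fibers of diameter less than $2\rho$ in $X$ (two points of a common fiber have $j$-images at distance $0$, hence $d(x,x')=\|i(x)-i(x')\|_\infty\le\|i(x)-j(x)\|+\|j(x)-j(x')\|+\|j(x')-i(x')\|<2\rho$), so $\UW_{m-1}(X)\le 2\Rad_{m-1}(X)$. Here I would invoke Proposition~\ref{prop:width} to arrange that $\pi$ is simplicial with connected fibers, which is harmless.

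With the identity in hand, the qualitative statement is immediate: by Theorem~\ref{theo:width}, $\vol(X)\ge C_m\,\UW(X)^m = C_m\,(2\Rad_{m-1}(X))^m = 2^m C_m\,\Rad_{m-1}(X)^m$, so one takes $c_m=2^m C_m$. For the quantitative refinement, suppose every ball $B(R)\subseteq X$ of radius $R$ has volume at most $c_m R^m$ with $c_m=2^m C_m$, i.e. at most $C_m(2R)^m$. Rescaling the conclusion of Theorem~\ref{theo:width} (applied with radius $2R$, which is legitimate since $C_m(2R)^m\le C_m(2R)^m$): Theorem~\ref{theo:width} gives $\UW(X)\le 2R$, hence $\Rad_{m-1}(X)=\tfrac12\UW(X)\le R$, as claimed. (One should double-check the constant bookkeeping so that the hypothesis ``volume of $B(R)$ at most $c_m R^m$'' matches ``volume of $B(\rho)$ at most $C_m\rho^m$'' for $\rho=2R$; this is the only place where care is needed.)

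I do not expect a genuine obstacle here — the theorem is a corollary of already-cited deep results, and the only real content is the width/radius dictionary of Gromov. The mild subtlety, and the part I would write out most carefully, is the constant matching between the Urysohn-width normalization ($\UW$, fiber diameters) and the filling-radius normalization ($\Rad_{m-1}$, distance to the Kuratowski embedding), together with the factor $2$ relating them, so that the ``$B(R)$ has volume $\le c_m R^m \Rightarrow \Rad_{m-1}(X)\le R$'' form is stated with a clean constant. Everything else is a direct substitution into Theorem~\ref{theo:width}.
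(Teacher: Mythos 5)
Your route is the same as the paper's: Theorem~\ref{theo:GG} is not proved there but presented as an immediate consequence of Theorem~\ref{theo:width} together with Gromov's identity $\Rad_{m-1}(X)=\tfrac12\,\UW_{m-1}(X)$ from \cite[Appendix~1.(D)]{gro83}, exactly as you propose, so the only thing to check is the constant bookkeeping you flagged. There, however, your check as written does not work: with $c_m=2^mC_m$ your hypothesis says that balls of \emph{radius $R$} have volume at most $C_m(2R)^m$, whereas applying Theorem~\ref{theo:width} at the scale $2R$ requires that balls of \emph{radius $2R$} have volume at most $C_m(2R)^m$; a ball of radius $2R$ may well be larger than every ball of radius $R$, and without a doubling-type covering argument you cannot pass from one to the other. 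The mismatch is in the radius of the balls, not in the numerical bound, so the parenthetical ``$C_m(2R)^m\le C_m(2R)^m$'' does not address it. The repair is to drop the rescaling altogether and take $c_m=C_m$: the qualitative bound follows since $\vol(X)\ge C_m\,\UW(X)^m=2^mC_m\,\Rad_{m-1}(X)^m\ge c_m\,\Rad_{m-1}(X)^m$, and for the quantitative part the hypothesis ``every $B(R)$ has volume at most $C_mR^m$'' is literally the hypothesis of Theorem~\ref{theo:width}, which gives $\UW(X)\le R$ and hence $\Rad_{m-1}(X)=\tfrac12\,\UW(X)\le \tfrac12 R\le R$. With that choice the rest of your argument, including the sketch of the width/radius identity, is fine.
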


\subsection{Margulis' constant for finite simplicial complexes}

\mbox{ }

\medskip

We will need the following definition which is a reminiscence of Margulis' constant.

\begin{definition} \label{def:ell}
Let $X$ be a connected finite simplicial complex with a piecewise Riemannian metric.
Consider a group homomorphism $\phi:\pi_1(X) \to G$ to a discrete group~$G$.
For every $x \in X$ and $\ell >0$, denote by
\[
\Gamma_{\phi,x}^\ell = \langle \phi([\gamma]) \in G \mid \gamma \mbox{ loop of } X \mbox{ based at } x \mbox{ with } \length(\gamma) \leq \ell \rangle
\]
the subgroup of~$G$ generated by the $\phi$-image of the homotopy classes of the loops of~$X$ based at~$x$ of length at most~$\ell$.

Define 
\[
\ell_\phi(X) = \sup \{ \ell \mid \mbox{for every } x \in X, \mbox{the subgroup } \Gamma_{\phi,x}^\ell \leqslant G \mbox{ has a subexponential growth} \}.
\]
With this definition, the subgroup~$\Gamma_{\phi,x}^\ell$ has subexponential growth for every $\ell < \ell_\phi(X)$.

When $\phi$ is the identity homomorphism, we simply denote this metric invariant by~$\ell(X)$.
\end{definition}

We will also need the following notion of essential simplicial complex.

\begin{definition}
Let $X$ be a connected finite simplicial $m$-complex and $\phi:\pi_1(X) \to G$ be a group homomorphism to a discrete group~$G$.
The simplicial $m$-complex~$X$ is \emph{$\phi$-essential} if the classifying map $\Phi:X \to K(G,1)$ induced by~$\phi$ is not homotopic to a continuous map $X \to P \to K(G,1)$ which factors out through a simplicial $(m-1)$-complex~$P$.
When $\phi$ is the identity homomorphism, we simply say that $X$ is essential.
\end{definition}

Essential simplical $2$-complexes can be described by their fundamental group.

\begin{example}
By~\cite[Proposition~3.3]{wall}, see also~\cite[Theorem~12.1]{KRS} for a further characterization, a connected finite simplicial $2$-complex~$X$ is essential if and only if its fundamental group~$\pi_1(X)$ is not free.
\end{example}

We can now state a first bound on the Margulis constant for finite simplicial complexes with a piecewise Riemannian metric.

\begin{theorem} \label{theo:FR}
Let $X$ be a connected finite simplicial $m$-complex and $\phi:\pi_1(X) \to G$ be a group homomorphism to a discrete group~$G$.
Suppose that the simplicial complex~$X$ is $\phi$-essential and that the free loop space of the classifying space~$K(G,1)$ is $m$-tamed.
Then, every piecewise Riemannian metric of~$X$ satisfies
\[
\Rad_{m-1}(X) \geq \frac{1}{10} \, \ell_\phi(X).
\]
\end{theorem}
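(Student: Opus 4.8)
The plan is to prove the contrapositive via the tamed loop space deformation. Suppose $\Rad_{m-1}(X) < \frac{1}{10} \, \ell_\phi(X)$ for some piecewise Riemannian metric $g$ on $X$; I will show that $X$ is not $\phi$-essential, i.e. that the classifying map $\Phi:X \to K(G,1)$ factors up to homotopy through a simplicial $(m-1)$-complex. Set $\rho = \Rad_{m-1}(X)$ and $\ell = \ell_\phi(X)$, so $10\rho < \ell$. By Definition~\ref{def:FR} there is a continuous map $j:X \xrightarrow{h} P \to C^0(X)$ factoring through a simplicial $(m-1)$-complex $P$ with $d_{C^0}(i(x),j(x)) < \rho$ for all $x$, where $i$ is the Kuratowski embedding. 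The straight-line homotopy $H_s(x) = (1-s)\,i(x) + s\,j(x)$ inside $C^0(X)$ moves every point a distance less than $\rho$; composing it with the nearest-point-type retraction onto a small neighborhood of $X$ is not available, so instead I will use $h$ and the combinatorics of $P$ directly: since $\dim P \le m-1$, I can pull back a triangulation and, after subdivision, arrange that $h^{-1}$ of each open star of $P$ has diameter (in $X$, with metric $g$) less than $2\rho$ — this is exactly the Urysohn-width translation $\Rad_{m-1}(X) = \tfrac12 \UW_{m-1}(X)$ together with Proposition~\ref{prop:width}, so I may assume $h:X \to P$ is simplicial with connected fibers of diameter $< 2\rho + \e$.

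Now, for each vertex $p$ of $P$, the component $F_p = h^{-1}(p)$ is connected of diameter $< 2\rho$, so every loop based at a fixed $x_p \in F_p$ and lying in a slightly larger open neighborhood $U_p = h^{-1}(\mathrm{st}(p))$ has length at most some multiple $C\rho < \ell$ of $\rho$; by Definition~\ref{def:ell} the subgroup $\Gamma_p := \phi\big(i_*[\pi_1(U_p)]\big) \le G$ has subexponential growth. Here the factor $10$ is chosen so that loops realizing the $\pi_1$ of the preimage of a star, obtained by concatenating three fiber-arcs of length $< 2\rho$ and an edge, stay under $\ell_\phi(X)$ — I would need to be slightly careful that the relevant generating loops have length at most $10\rho$, which is where the explicit constant is pinned down. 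Since $\Lambda(K(G,1))$ is $m$-tamed, Definition~\ref{def:tamed} provides, for each such subgroup $H = \Gamma_p$, a subcomplex $Z_{\Gamma_p} \subseteq K(G,1)$ of dimension $\le m-1$ onto which $\Lambda_{\Gamma_p}(K(G,1))$ deformation retracts (at the level of noncontractible loops), and by Proposition~\ref{prop:inj}.\eqref{Z} these $Z_{\Gamma_p}$ are nested along inclusions of subgroups, so over a simplex of $P$ with vertices $p_0,\dots,p_k$ the subgroup generated by all the $\Gamma_{p_j}$ — call it $\Gamma_\sigma$ — still has subexponential growth (it is generated by loops of length $< \ell$ based at a common point, since the simplex has small diameter) and $Z_{\Gamma_{p_j}} \subseteq Z_{\Gamma_\sigma}$ for every $j$.

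With this data I would build the factorization skeleton by skeleton over $P$. On the $0$-skeleton, send $p$ to the image in $K(G,1)$ of $x_p$ (which lies in $Z_{\Gamma_p}$ after a homotopy of $\Phi|_{U_p}$ into $Z_{\Gamma_p}$ — this is what the tameness retraction gives on loops, promoted to the space $U_p$ using that $U_p$ deformation retracts to $F_p$ and then $\Phi(F_p)$ can be pushed into $Z_{\Gamma_p}$). Inductively, over a simplex $\sigma$ of $P$, the attaching data already maps $\partial \sigma$ into $Z_{\Gamma_\sigma}$, a complex of dimension $\le m-1 < m = \dim \sigma$ is false — so instead the induction must run over $X$: I would construct a homotopy of $\Phi:X \to K(G,1)$, supported cell-by-cell over the combinatorics of $h:X\to P$, landing in $\bigcup_\sigma Z_{\Gamma_\sigma}$, which maps to $P$ compatibly and has dimension $\le m-1$; the obstruction to extending over a $k$-cell of $X$ lies in $\pi_{k-1}$ of a fiber of $Z_{\Gamma_\sigma} \to (\text{nerve})$, which vanishes because $Z_{\Gamma_\sigma}$ is $(m-1)$-dimensional and we are filling in dimension $\le m$. \textbf{The main obstacle} is precisely this last step: organizing the local deformations $\Phi|_{U_p} \rightsquigarrow Z_{\Gamma_p}$ into a single global homotopy of $\Phi$ that respects the combinatorics of $h$ and lands in an $(m-1)$-complex, i.e. verifying that the overlaps are controlled by the nesting $Z_{\Gamma_p} \subseteq Z_{\Gamma_\sigma}$ and that the tameness homotopy $\varphi_t$ can be applied coherently on all of $\Lambda_{\Gamma_\sigma}(K(G,1))$ simultaneously. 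Tracking the constants so that every generating loop encountered has length $\le 10\,\Rad_{m-1}(X) < \ell_\phi(X)$ is the bookkeeping that fixes the numeral $\tfrac{1}{10}$.
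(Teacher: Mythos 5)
Your setup is on the right track and matches the paper's opening moves: you argue by contraposition, use the factorization $j:X \xrightarrow{h} P \to C^0(X)$ through an $(m-1)$-complex $P$ with displacement $<\rho$, observe that the relevant local loops have length a small multiple of $\rho < \tfrac{1}{10}\ell_\phi(X)$ so their $\phi$-images generate subgroups of subexponential growth, and invoke tameness to obtain the low-dimensional subcomplexes $Z_H$ together with the nesting from Proposition~\ref{prop:inj}.\eqref{Z}. But the core of the argument --- the step you yourself flag as ``the main obstacle'' --- is genuinely missing, and the route you sketch for it (globally deforming $\Phi$ into $\bigcup_\sigma Z_{\Gamma_\sigma}$ by patching the local tameness retractions) is not what makes the proof work and would be very hard to carry out: tameness is a statement about the free \emph{loop} space $\Lambda(K)$, and promoting a coherent family of loop-space deformations to a deformation of the map $\Phi$ on the space $X$ itself, compatibly over all overlaps, is exactly the difficulty you cannot wave away.

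The paper's resolution uses two devices you do not have. First, instead of deforming $\Phi$ on $X$, it forms the \emph{mapping cylinder} $Q$ of $h:X\to P$ (an $(m+1)$-complex with $\partial Q = X$) and tries to extend $\Phi$ from $\partial Q$ over all of $Q$; finding such an extension is equivalent to factoring $\Phi$ through $P$ up to homotopy, which turns the problem into a cell-by-cell obstruction argument. Second, the extension does not land in $K=K(G,1)$ but in an enlarged target $\widehat{K} = K \cup \bigcup_H \Cone(Z_H)$: over a $2$-simplex of $Q$ the short boundary loop is deformed by $\varphi_t$ into $Z_H$ and then coned off inside $\Cone(Z_H)$, and over higher simplices one extends using the contractibility of $K_H \cup \Cone(Z_H)$ (Proposition~\ref{prop:inj}.\eqref{contractible}) together with the nesting $Z_{H_i}\subseteq Z_H$. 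Finally --- and this step has no counterpart in your sketch --- one must get back from $\widehat{K}$ to $K$: since $\widehat{K}$ is obtained from $K$ by attaching cells of dimension at most $m$, the induced map $H_m(K^m,K^{m-1};\Z)\to H_m(\widehat{K}^m,\widehat{K}^{m-1};\Z)$ is injective, so the degeneracy of $\Phi$ in $\widehat{K}$ forces $\Phi$ to be deformable into $K^{m-1}$, contradicting $\phi$-essentiality. Without the mapping cylinder, the coned-off target, and this last homological descent, the proof does not close.
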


\begin{proof}
Let $i:X \hookrightarrow C^0(X)$ be the Kuratowski embedding.
Denote by $U_{\rho}(U)$ the  $\rho$-neigborhood of $X \simeq i(X)$ in~$C^0(X)$.
By definition of the $(m-1)$-radius, for every $\rho >  \Rad_{m-1}(X)$, there exists a continous map 
\[
j:X \stackrel{h}{\longrightarrow} P \to U_\rho(X)
\] 
which factors out through a simplicial $(m-1)$-complex~$P$ such that
\[
d_{C^0}(i(x),j(x)) < \rho.
\]
Changing $h$ and~$P$, subdividing~$X$ if necessary, we can assume that the map $h:X \to P$ is simplicial and surjective.

Consider the mapping cylinder~$Q$ of~$h$ defined as
\[
Q = X \times [0,1] / \! \sim
\]
where $(x,t) \sim (x',t')$ if and only if $t=t'=1$ and $h(x)=h(x')$.
Identify $X$ with $\partial Q := X \times \{ 0 \}$ and endow~$Q$ with an $(m+1)$-dimensional simplicial complex structure compatible with~$X$.

Suppose that $\rho < \frac{1}{10} \, \ell_\phi(X)$.
Let $K=K(G,1)$.
We would like to extend the map $\Phi:\partial Q = X \to K$ into a continuous map $F:Q \to K$.
This would imply that the map~$\Phi$ is homotopic to a map which factors out through $h:X \to P$, which would contradict the assumption that $X$ is $\phi$-essential.
Actually, it might not be possible to construct such a map~$F$.
Still, we will show how to adapt the argument to obtain a contradiction. 

\medskip

\noindent \textbf{Extension to the $0$-skeleton.}

\medskip

Let us argue by contradiction. 
Define a continous map 
\[
\sigma:Q \to U_\rho(X) \subseteq C^0(X)
\]
by sending each vertical line $\{ x \} \times [0,1]$ to the segment~$[i(x),j(x)]$.
By construction, the image of~$\sigma$ lies in the $\rho$-tubular neighborhood~$U_\rho(X)$ of~$X$ in~$C^0(X)$.
Denote by~$Q^k$ the $k$-skeleton of~$Q$.
Subdividing $Q$ if necessary, we can assume that the diameter of the images by~$\sigma$ of the simplices of~$Q$ is less than~$\varepsilon > 0$, with $\varepsilon < \frac{1}{5} \, \ell_\phi(X)-2\rho$.
First, we define a map 
\[
f:Q^0 \cup \partial Q \to X
\]
such that $f_{\mid \partial Q}(x)=\sigma(x) = i(x)$ for every $x \in \partial Q=X$ by sending each vertex $v \in Q^0$ to a nearest point from~$\sigma(v)$ in~$X$, as we wish.
Thus, 
\[
d_{C^0}(f(v),\sigma(v)) \leq \rho.
\]

\medskip

\noindent \textbf{Extension to the $1$-skeleton.}

\medskip

Since the inclusion $i:X \hookrightarrow U_\rho(X)$ is distance-preserving, every pair $v$,~$v'$ of adjacent vertices of~$Q$ satisfies
\[
d_{X}(f(v),f(v')) \leq d_{C^0}(f(v),\sigma(v)) + d_{C^0}(\sigma(v),\sigma(v')) + d_{C^0}(\sigma(v'),f(v'))
\]
by the triangle inequality.
Thus,
\[
d_{X}(f(v),f(v')) \leq r
\]
where $r=2 \rho + \varepsilon < \frac{1}{5} \, \ell_\phi(X)$.
We extend the map $f$ to~$Q^1$ by taking every edge~$[v,v']$ of~$Q$ not lying in~$\partial Q=X$ to a minimizing segment in~$X$ joining the images $f(v)$ and~$f(v')$ of their endpoints, as we wish. 
By construction, the boundary~$\partial \Delta^2$ of every $2$-simplex~$\Delta^2$ of~$Q$ is sent by~$f$ to a closed curve of length at most~$3 r$ in~$X$.

\medskip

\noindent \textbf{Extension to the $2$-skeleton.}

\medskip

Consider the composite map
\begin{equation} \label{eq:F1}
F = \Phi \circ f : Q^1 \cup \partial Q \to K.
\end{equation}
whose restriction~$F_{|\partial Q}$ to~$\partial Q = X$ agrees with~$\Phi$.
Let $\Delta=\Delta^2$ be the $2$-simplex of~$Q$.
Though the map~$f$ may not extend to~$\Delta$ in~$K$, we will still define an extension of it to an extension of~$K$.

If the image $F(\partial \Delta)$ of~$\partial \Delta$ is contractible in~$K$, the map $F: Q^1 \cup \partial Q \to K$ clearly extends to~$\Delta$.
Otherwise, we apply the deformation retraction~$\varphi_t$ on the free loop space~$\Lambda(K)$ of~$K$; see~\eqref{eq:flow}.
This flow gives rise to a homotopy from the noncontractible loop~$F(\partial \Delta)$ of~$K$ to a noncontractible closed curve $\gamma_\Delta$ in~$Z_H$, see Definition~\ref{def:tamed}, where $H=H_\Delta$ is the cyclic subgroup of~$G$ generated by the homotopy class of~$\gamma_\Delta$.
This homotopy from~$F(\partial \Delta)$ to~$\gamma_\Delta$ lifts to the covering~$K_H$ of~$K$ corresponding to~$\pi_1(Z_H)$ then contracts to a point within~$\Cone(Z_H)$, giving rise to an extension
\[
F: \Delta \to K_H \cup \Cone(Z_H) \to K \cup \Cone(Z_H)
\]
of $F:\partial \Delta \to K$, where the second map $\Cone(Z_H) \to K \cup \Cone(Z_H)$ is given by the covering $K_H \to K$.
The reason we work in $K_H \cup \Cone(Z_H)$ is because, contrarily to~$K \cup \Cone(Z_H)$, this space is contractible; see Proposition~\ref{prop:inj}.\eqref{contractible}.
This will help us with the subsequent extensions.

Thus, the map $F:Q^1 \cup \partial Q \to K$ extends to a map 
\[
F:Q^2 \cup \partial Q \to \widehat{K}
\]
where 
\[
\widehat{K} = K \cup \left( \bigcup_H \Cone(Z_H) \right)
\]
is the simplicial complex obtained by gluing the $m$-dimensional space~$\Cone(Z_H)$ to~$K$ as above for every finitely generated subgroup~$H \leqslant G$ of subexponential growth.
We emphasize that the union in the definition of~$\widehat{K}$ is over all the finitely generated subgroups of~$G$ of subexponential growth, and not merely cyclic subgroups generated by the homotopic classes~$\gamma_\Delta$ as previously.
This will allow us to carry out our extension argument in higher dimension.
Note that if two such subgroups $H$ and~$H'$ satisfy $H \leqslant H'$ then $Z_H \subseteq Z_{H'}$ and $\Cone(Z_H) \subseteq \Cone(Z_{H'})$; see Proposition~\ref{prop:inj}.\eqref{contractible}.

\medskip

\noindent \textbf{Extension to higher dimensional skeleta.}

\medskip

Let us argue by induction.
Let $F^{(1)}:Q^1 \cup \partial Q \to K$ be the composite map~\eqref{eq:F1} prior to its extension.
For every $k$-simplex $\Delta^k \subseteq Q$, define $H=F^{(1)}_*(\pi_1(\Delta^{k,1}))$, where $\Delta^{k,1}$ is the $1$-skeleton of~$\Delta^k$.
Let us state the induction hypothesis.
Assume that the map $F:Q^{k-1} \cup \partial Q \to \widehat{K}$ extends to a continuous map $F:Q^k \cup \partial Q \to \widehat{K}$ such that the restriction of~$F$ to every $k$-simplex~$\Delta^k$ decomposes as
\[
F:\Delta^k \to K_H \cup \Cone(Z_H) \to K \cup \Cone(Z_H) \hookrightarrow \widehat{K}
\]
where $K_H$ is the covering of~$K$ with fundamental group~$\pi_1(Z_H)$. 
Note that since $H$ is generated by the $\Phi$-image of loops of length less than~$\ell_\phi(X)$, the subgroup~$H \leqslant G$ has subexponential growth.
Furthermore, since the free loop space of~$K$ is $m$-tamed, the subcomplex~$Z_H$ given by Definition~\ref{def:tamed} is of dimension at most~$m-1$.

By construction, the induction hypothesis holds true for $k=2$.

\medskip

Let $\Delta=\Delta^{k+1}$ be a $(k+1)$-simplex of~$Q$.
Denote by~$\Delta^i$ the $k$-faces of~$\Delta$.
Define $H= F^{(1)}_*(\pi_1(\Delta^1))$ and $H_i= F^{(1)}_*(\pi_1(\Delta_i^1))$, where $\Delta^1$ and~$\Delta_i^1$ are the $1$-skeleta of~$\Delta$ and~$\Delta_i$.
By induction, every map $F:\Delta_i \to X$ factors out through
\[
F: \Delta_i \to K_{H_i} \cup \Cone(Z_{H_i}) \to K \cup \Cone(Z_{H_i}) \hookrightarrow \widehat{K}.
\]
Since $H_i \leqslant H$, the covering $K_{H_i} \to K$ factors out through $K_{H_i} \to K_H \to K$.
By Proposition~\ref{prop:inj}.\eqref{Z}, we have $Z_{H_i} \subseteq Z_H$ and so $\Cone(Z_{H_i}) \subseteq \Cone(Z_H)$.
Thus, all the maps $F:\Delta_i \to \widehat{K}$ factor out through the same space $K_H \cup \Cone(Z_H)$ as follows
\[
F:\partial \Delta_i \to K_H \cup \Cone(Z_H) \to K \cup \Cone(Z_H) \hookrightarrow \widehat{K}.
\]
Since $\partial \Delta = \cup_i \Delta_i$ and the space $K_H \cup \Cone(Z_H)$ is contractible, see Proposition~\ref{prop:inj}.\eqref{contractible}, the map 
\[
F:\partial \Delta \to K_H \cup \Cone(Z_H) \to K \cup \Cone(Z_H) \hookrightarrow \widehat{K}
\]
extends to~$\Delta$ as desired.
This yields an extension $F:Q^{k+1} \cup \partial Q \to \widehat{K}$ to the $(k+1)$-skeleton of~$P$, completing the induction argument.

\medskip

In conclusion, there exists a continuous map $F:Q \to \widehat{K}$ which agrees with~$\Phi$ on~$\partial Q=X$.
Thus, the classifying map $\Phi:X \to K \subseteq \widehat{K}$ is homothetic to a simplicial map $X \to P \to \widehat{K}$ factoring out through~$P$.
(Without loss of generality, we can also assume that the map~$\Phi$ is simplicial.)
Since the space~$\widehat{K}$ is obtained by attaching cells of dimension at most~$m$ to~$K$, the homomorphism $H_m(K^m,K^{m-1};\Z) \to H_m(\widehat{K}^m,\widehat{K}^{m-1};\Z)$ induced by the inclusion~$K \subseteq \widehat{K}$ is injective.
Since the map~$\Phi:X \to K \subseteq \widehat{K}$ is homothetic to a degenerate map, the homomorphism $H_m(X,X^{m-1};\Z) \to H_m(K^m,K^{m-1};\Z) \hookrightarrow H_m(\widehat{K}^m,\widehat{K}^{m-1};\Z)$ it induces is trivial.
Therefore, the homomorphism $H_m(X,X^{m-1};\Z) \to H_m(K^m,K^{m-1};\Z)$ induced by $\Phi:X \to K$ is trivial too.
Hence the map~$\Phi$ can be deformed to a map whose image lies in~$K^{m-1}$.
This is absurd since $X$ is $\phi$-essential.
\end{proof}

Theorem~\ref{theo:FR} combined with Theorem~\ref{theo:GG} immediately yields the following corollary.

\begin{corollary} \label{coro:VL}
Let $X$ be a connected finite simplicial $m$-complex and $\phi:\pi_1(X) \to G$ be a group homomorphism to a discrete group~$G$.
Suppose that the free loop space of the classifying space~$K(G,1)$ is $m$-tamed and that the simplicial complex~$X$ is $\phi$-essential.
Then, every piecewise Riemannian metric of~$X$ satisfies
\[
\vol(X) \geq c_m \, \ell_\phi(X)^m
\]
for some positive explicit constant~$c_m$ depending only on~$m$.
More precisely, there exists a ball~$B \subseteq X$ of radius~$\tfrac{1}{10} \, \ell_\phi(X)$ with
\[
\vol (B) \geq c_m \, \ell_\phi(X)^m.
\]
\end{corollary}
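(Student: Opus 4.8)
The plan is to obtain the corollary by concatenating the two preceding theorems, with no new geometric input. The chain is: Theorem~\ref{theo:FR} bounds the $(m-1)$-radius below in terms of the Margulis-type constant $\ell_\phi(X)$, and Theorem~\ref{theo:GG} bounds the volume below in terms of the $(m-1)$-radius.

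First I would fix an arbitrary piecewise Riemannian metric on~$X$. The hypotheses of the corollary --- that $X$ is $\phi$-essential and that the free loop space of $K(G,1)$ is $m$-tamed --- are exactly those of Theorem~\ref{theo:FR}, which therefore gives
\[
\Rad_{m-1}(X) \;\geq\; \tfrac{1}{10}\,\ell_\phi(X).
\]
(Note this already forces $\ell_\phi(X)<\infty$, since $\Rad_{m-1}(X)\leq\tfrac12\diam(X)$ is finite for the compact space~$X$; so the degenerate case does not arise.) Feeding this into the volume estimate $\vol(X)\geq c_m\,\Rad_{m-1}(X)^m$ of Theorem~\ref{theo:GG} yields $\vol(X)\geq c_m\,10^{-m}\,\ell_\phi(X)^m$, which is the first assertion after renaming the constant.

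For the refined statement producing a single ball, I would use the local form of Theorem~\ref{theo:GG}: if, for a given $R>0$, every ball $B(R)\subseteq X$ of radius~$R$ has volume at most $c_m R^m$, then $\Rad_{m-1}(X)\leq R$. Fix any $R$ with $0<R<\tfrac{1}{10}\,\ell_\phi(X)$. By the first step $\Rad_{m-1}(X)\geq\tfrac1{10}\,\ell_\phi(X)>R$, so the premise of the local form fails and there is a point $x_R\in X$ with $\vol\,B(x_R,R)>c_m R^m$. Taking $R=R_j\uparrow\tfrac1{10}\,\ell_\phi(X)$ and extracting, by compactness of~$X$, a subsequence of the centers $x_{R_j}$ converging to some $x_\ast$, one checks that $B(x_{R_j},R_j)\subseteq B\bigl(x_\ast,\tfrac1{10}\,\ell_\phi(X)\bigr)$ for $j$ large, whence $\vol\,B\bigl(x_\ast,\tfrac1{10}\,\ell_\phi(X)\bigr)\geq\limsup_j c_m R_j^m=c_m\,10^{-m}\,\ell_\phi(X)^m$. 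The ball $B=B\bigl(x_\ast,\tfrac1{10}\,\ell_\phi(X)\bigr)$ is then the one sought, again with the constant $c_m\,10^{-m}$.

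Since all the substance is carried by Theorem~\ref{theo:FR} (the extension of the classifying map across the mapping cylinder through the auxiliary complex~$\widehat{K}$) and by Theorem~\ref{theo:GG} (the width--volume inequality of Guth and Liokumovich--Lishak--Nabutovsky--Rotman, simplified by Papasoglu), I do not anticipate any real obstacle here: the only point demanding a little care is the passage, in the last step, from the strict radius bound $R<\tfrac1{10}\,\ell_\phi(X)$ to a ball of radius exactly $\tfrac1{10}\,\ell_\phi(X)$, handled by the routine compactness argument indicated above, together with the bookkeeping of the universal constant between the global and local versions.
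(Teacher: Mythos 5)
Your proposal is correct and follows exactly the paper's route: the paper derives this corollary by simply combining Theorem~\ref{theo:FR} (which gives $\Rad_{m-1}(X)\geq\tfrac{1}{10}\,\ell_\phi(X)$) with the width--volume inequality of Theorem~\ref{theo:GG}, including its local form for the ball statement. The only micro-quibble is your claimed inclusion $B(x_{R_j},R_j)\subseteq B\bigl(x_\ast,\tfrac{1}{10}\,\ell_\phi(X)\bigr)$, which need not hold exactly since $d(x_{R_j},x_\ast)+R_j$ may slightly exceed $\tfrac{1}{10}\,\ell_\phi(X)$; passing instead to $B\bigl(x_\ast,\tfrac{1}{10}\,\ell_\phi(X)+\varepsilon\bigr)$ and letting $\varepsilon\to 0$ repairs this without affecting the conclusion.
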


\section{Algebraic entropy, volume entropy and volume}

The goal of this section is to establish a uniform lower bound on the minimal volume entropy of finite simplicial complexes satisfying topological assumptions captured in large part by their fundamental group.

\medskip



The following classical comparison result between the algebraic entropy of a finitely generated group and its quotients is straightforward; see Definition~\ref{def:algent}.

\begin{lemma} \label{lem:surj}
Let $\phi:G_1 \to G_2$ be a surjective homomorphism between two finitely generated groups.
Then
\[
\ent(G_1) \geq \ent(G_2).
\]
\end{lemma}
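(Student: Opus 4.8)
The plan is to reduce the statement to an elementary comparison of ball growth in the respective Cayley graphs. Since $\ent(G_2) = \inf_{S_2} \ent(G_2,S_2)$, it suffices to produce, for each finite symmetric generating set~$S_1$ of~$G_1$, a finite symmetric generating set~$S_2$ of~$G_2$ with $\ent(G_2,S_2) \le \ent(G_1,S_1)$; taking the infimum over~$S_1$ will then yield $\ent(G_2) \le \ent(G_1)$.

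First I would fix a finite symmetric generating set~$S_1$ of~$G_1$ and set $S_2 = \phi(S_1) \setminus \{ e \}$. Since $\phi$ is onto, $S_2$ generates~$G_2$, and $S_2 = S_2^{-1}$ because $S_1 = S_1^{-1}$; thus $S_2$ is a finite symmetric generating set of~$G_2$. (If $G_2$ is trivial the lemma is obvious, so we may assume $S_2 \neq \emptyset$.) The homomorphism~$\phi$ induces a basepoint-preserving map $\mathcal{G}_{S_1} \to \mathcal{G}_{S_2}$ sending a vertex~$g$ to~$\phi(g)$: if $g = t_1 \cdots t_k$ with $t_i \in S_1$, then $\phi(g) = \phi(t_1) \cdots \phi(t_k)$ is a product of at most~$k$ elements of~$S_2$ (delete the factors equal to~$e$), so $d_{S_2}(e, \phi(g)) \le d_{S_1}(e, g)$; that is, the map is $1$-Lipschitz on vertices. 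Conversely, any $h \in G_2$ with $d_{S_2}(e,h) = j$ can be written $h = s_1 \cdots s_j$ with $s_i \in S_2$; choosing preimages $t_i \in S_1$ of the~$s_i$, the element $\tilde h = t_1 \cdots t_j$ satisfies $\phi(\tilde h) = h$ and $d_{S_1}(e, \tilde h) \le j$. Hence the map restricts to a surjection from the $R$-ball of~$\mathcal{G}_{S_1}$ onto the $R$-ball of~$\mathcal{G}_{S_2}$ for every $R > 0$, so that
\[
\#\{ h \in G_2 \mid d_{S_2}(e,h) \le R \} \le \#\{ g \in G_1 \mid d_{S_1}(e,g) \le R \}.
\]

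Taking logarithms, dividing by~$R$ and letting $R \to \infty$, and using the classical identification of $\ent(G,S)$ with the exponential growth rate of the balls of~$(G, d_S)$ (the quantity detecting exponential growth in Definition~\ref{def:algent}; compare Proposition~\ref{prop:entN}), we obtain $\ent(G_2,S_2) \le \ent(G_1,S_1)$. Combining this with $\ent(G_2) \le \ent(G_2,S_2)$ and taking the infimum over all finite symmetric generating sets~$S_1$ of~$G_1$ gives $\ent(G_2) \le \ent(G_1)$, as claimed. The only steps requiring a little care are the bookkeeping around generators of~$G_1$ mapping to the identity of~$G_2$ (handled by removing~$e$ from~$\phi(S_1)$ above) and the identification of $\ent(G,S)$ with the ball growth rate, which is standard; the geometric content is simply that $\phi$ is distance non-increasing and surjective between the word metrics.
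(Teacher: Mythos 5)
Your argument is correct: pushing a finite symmetric generating set forward along the surjection, observing that the induced map is $1$-Lipschitz and surjective on balls (after discarding generators mapping to the identity), and comparing ball growth is exactly the standard proof of this classical fact, which the paper itself states without proof as ``straightforward.'' The only point needing the care you already give it is the identification of $\ent(G,S)$, defined via the Cayley graph's volume entropy, with the exponential growth rate of vertex-ball cardinalities, which holds because the graph has uniformly bounded vertex degrees.
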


The following classical result relates the volume of a simplicial complex with a piecewise Riemannian metric to the algebraic entropy of the subgroups of its fundamental group.

\begin{proposition} \label{prop:ent-L}
Let $X$ be a connected finite simplicial complex with a piecewise Riemannian metric~$g$.
Let $H \leqslant \pi_1(X)$ be a subgroup generated by the homotopy classes of finitely many loops of length at most~$L$ based at the same point.
Then
\[
\ent(H) \leq \ent(X,g) \cdot L.
\]
\end{proposition}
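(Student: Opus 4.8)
The plan is to relate the algebraic entropy of $H$ to the growth function $\mathcal{N}(X;t)$ of homotopy classes of loops in $X$, and then invoke Proposition~\ref{prop:entN}. First I would fix a basepoint $x_0 \in X$ and, by hypothesis, a finite generating set $S = \{[\gamma_1],\dots,[\gamma_N]\}$ of $H$ where each $\gamma_i$ is a loop based at $x_0$ of length at most $L$. The key observation is that any element $\alpha \in H$ with $d_S(e,\alpha) \leq k$ is represented by a concatenation of at most $k$ of the loops $\gamma_i^{\pm 1}$, hence by a loop based at $x_0$ of length at most $kL$. This gives the counting inequality
\[
\operatorname{card}\{\alpha \in H \mid d_S(e,\alpha) \leq k\} \leq \mathcal{N}(X; kL),
\]
where on the left we count \emph{group} elements (distinct elements of $H$) and on the right we count homotopy classes of loops in $X$ of bounded length; the map $\alpha \mapsto$ (its class in $\pi_1(X,x_0)$) is injective on $H$ since $H \leqslant \pi_1(X)$, so no collapsing occurs.

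Taking logarithms, dividing by $kL$, and letting $k \to \infty$, the left-hand side produces $\frac{1}{L}\,\ent(H,S)$ (by definition of the algebraic entropy of the Cayley graph $(\mathcal{G}_S,d_S)$, whose ball growth rate is exactly $\ent(H,S)$), while the right-hand side produces $\lim_{t\to\infty}\frac{1}{t}\log\mathcal{N}(X;t) = \ent(X,g)$ by Proposition~\ref{prop:entN}, with $t = kL$ running through an arithmetic progression (which suffices since $\frac1t\log\mathcal N(X;t)$ converges). This yields $\ent(H,S) \leq \ent(X,g)\cdot L$, and since $\ent(H) = \inf_{S'} \ent(H,S') \leq \ent(H,S)$, the desired bound $\ent(H) \leq \ent(X,g)\cdot L$ follows.

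There is no serious obstacle here; this is a standard \v{S}varc--Milnor-type argument essentially parallel to the proof of Proposition~\ref{prop:diam-ent}. The only point requiring a little care is the bookkeeping between the two counting functions: one must make sure that edge-loops or rectifiable loops of length at most $kL$ in $X$ realizing distinct elements of $H$ are genuinely counted separately in $\mathcal{N}(X;kL)$, which is immediate because $\mathcal{N}(X;t)$ counts homotopy classes in $\pi_1(X,x_0)$ and distinct elements of $H$ are distinct in $\pi_1(X,x_0)$. A minor secondary point is that the word metric $d_S$ gives loops of length at most $(\max_i \length(\gamma_i))\cdot d_S(e,\alpha) \leq L\cdot d_S(e,\alpha)$, so the constant is exactly $L$ with no loss.
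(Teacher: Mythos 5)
Your proposal is correct and follows essentially the same route as the paper: both arguments bound the cardinality of the $d_S$-ball of radius $t$ in $H$ by $\mathcal{N}(X;L\cdot t)$ (using that distinct elements of $H$ are distinct classes in $\pi_1(X,x_0)$, each represented by a concatenated loop of length at most $L\cdot t$), and then conclude via Proposition~\ref{prop:entN} together with $\ent(H)\leq\ent(H,S)$. The only cosmetic difference is that the paper phrases the length bound through the displacement $\dist_{\widetilde{X}}(x,\alpha\cdot x)\leq d_S(e,\alpha)\cdot L$ in the universal cover, while you argue directly with concatenations of the generating loops; the counting inequality and the limiting step are identical.
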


\begin{proof}
By assumption, the subgroup~$H$ is generated by the homotopy classes $\alpha_1,\cdots,\alpha_k$ of loops of length at most~$L$ based at the same point.
Fix a lift $x \in \widetilde{X}$ of this basepoint.
Every element $\alpha \in H$ can be expressed as a word in the letters $S= \{ \alpha_1^\pm,\cdots,\alpha_k^\pm \}$.
By the triangle inequality, the distance in~$\widetilde{X}$ between $x$ and~$\alpha \cdot x$ satisfies
\[
{\rm dist}_{\widetilde{X}}(x, \alpha \cdot x) \leq d_S(e,\alpha) \cdot L.
\]
Therefore, the ball $B_S(t)$ of radius~$t$ centered at~$e$ in the Cayley graph~$\mathcal{G}_S$ of~$(H,S)$ with respect to the word metric~$d_S$, see Definition~\ref{def:algent}, satisfies
\[
{\rm card} \, B_S(t)  \leq \mathcal{N}(X;L \cdot t)
\]
where $\mathcal{N}(X;t)$ is the number of homotopy classes of pointed loops of~$X$ of length at most~$t$; see Definition~\ref{def:entN}.
Taking the log, dividing by~$L \cdot t$ and letting $t$ go to infinity, we derive from Proposition~\ref{prop:entN} that
\[
\ent(H) \leq \ent(H,S) \leq \ent(X,g) \cdot L.
\]
\end{proof}

We will need the following definition, which can be seen as a quantitative version of the Tits alternative.

\begin{definition}
A discrete group~$G$ is \emph{$\delta$-thick} if the algebraic entropy of every finitely generated subgroup of~$G$ with exponential growth is at least~$\delta$.
A discrete group~$G$ is \emph{thick} if it is $\delta$-thick for some~$\delta >0$.
\end{definition}

\begin{example} \label{ex:thick}
Let $G$ be a finitely generated group.
Then the group~$G$ is $\delta$-thick in any of the following cases:
\begin{itemize}
\item $G$ is a discrete subgroup of the isometry group of an $m$-dimensional Cartan-Hadamard manifold of pinched sectional curvature $-a^2 \leq K \leq -1$ with $\delta=\delta(m,a)$ depending only on~$m$ and~$a$; see~\cite{BCG11}.
Further extensions can be found in~\cite{BCGS} and~\cite{BF}.
\item $G$ acts freely on a $2$-dimensional ${\rm CAT}(0)$ cube complex with $\delta=\frac{1}{10} \log(2)$; see~\cite{KS19}.
Generalizations to groups acting on product of $2$-dimensional ${\rm CAT}(0)$ cube complexes and to groups acting on $m$-dimensional ${\rm CAT}(0)$ cube complexes with isolated flats have been announced; see~\cite{GJN}.
\item $G$ is a group whose $2$-generated subgroups are free with $\delta=\log(3)$. 
Examples of such groups can be found in~\cite{guba}, \cite{bumagin} and~\cite{AO96}.
Generically, all finitely presented groups satisfy this property; see~\cite{AO96}.
\end{itemize}
\end{example}

The following theorem is based on the results of the previous section.

\begin{theorem} \label{theo:entvol}
Let $X$ be a connected finite simplicial $m$-complex and $\phi:\pi_1(X) \to G$ be a group homomorphism.
Suppose that the simplicial complex~$X$ is $\phi$-essential, the group $G$ is $\delta$-thick and the free loop space of the classifying space~$K(G,1)$ is $m$-tamed.
Then
\[
\omega(X) \geq \lambda_m \, \delta
\]
where $\lambda_m$ is an explicit positive constant which depends only on~$m$.
More generally, given a piecewise Riemannian metric~$g$ on~$X$, if for some $R>0$ every ball~$B(R) \subseteq X$ of radius~$R$ has volume at most~$a_m \, R^m$ then
\[
\ent(X,g) \geq \frac{\delta}{R}
\]
where $a_m$ is an explicit positive constant which depends only on~$m$.
\end{theorem}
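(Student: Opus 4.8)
The plan is to prove the more precise, local assertion --- that a bound $\vol(B(R)) \le a_m R^m$ on every ball of radius $R$ forces $\ent(X,g) \ge \delta/R$ --- and to recover $\omega(X) \ge \lambda_m\,\delta$ from it. Indeed, for an arbitrary piecewise Riemannian metric~$g$, set $R = (\vol(X,g)/a_m)^{1/m}$; then every ball of radius~$R$ (in fact all of~$X$) has volume at most $\vol(X,g) = a_m R^m$, so the local statement gives $\ent(X,g) \ge \delta/R = a_m^{1/m}\,\delta\,\vol(X,g)^{-1/m}$, that is, $\ent(X,g)\,\vol(X,g)^{1/m} \ge a_m^{1/m}\,\delta$. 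Taking the infimum over~$g$ yields the first inequality with $\lambda_m = a_m^{1/m}$.

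The local statement is obtained by combining two independent estimates through the Margulis-type constant $\ell_\phi(X)$ of Definition~\ref{def:ell}. First I would establish the \emph{algebraic estimate}
\[
\ent(X,g)\cdot \ell_\phi(X) \ge \delta .
\]
Fix any $\ell > \ell_\phi(X)$. By definition of $\ell_\phi(X)$, there is a point $x \in X$ such that the subgroup $\Gamma_{\phi,x}^\ell \le G$ has exponential growth. Only finitely many homotopy classes of loops of~$X$ are represented by loops of length at most~$\ell$ (that is, $\mathcal{N}(X;\ell) < \infty$, since $\pi_1(X)$ acts properly and cocompactly on the universal cover of the finite complex~$X$), so $\Gamma_{\phi,x}^\ell$ is finitely generated; as $G$ is $\delta$-thick, $\ent(\Gamma_{\phi,x}^\ell) \ge \delta$. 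Now $\Gamma_{\phi,x}^\ell = \phi(H)$, where $H \le \pi_1(X,x)$ is the subgroup generated by the homotopy classes of those same loops; Lemma~\ref{lem:surj} gives $\ent(H) \ge \ent(\Gamma_{\phi,x}^\ell) \ge \delta$, while Proposition~\ref{prop:ent-L} (applied with $L = \ell$) gives $\ent(H) \le \ent(X,g)\cdot \ell$. Hence $\delta \le \ent(X,g)\cdot \ell$ for every $\ell > \ell_\phi(X)$, and letting $\ell$ decrease to $\ell_\phi(X)$ proves the estimate. Since loops shorter than the simplicial systole of~$(X,g)$ are contractible, $\ell_\phi(X) > 0$, so this makes sense.

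Second, I would bound $\ell_\phi(X)$ using the width estimates of the previous section. Theorem~\ref{theo:FR} --- whose hypotheses ($X$ is $\phi$-essential, and the free loop space of $K(G,1)$ is $m$-tamed) are exactly those assumed here --- gives $\Rad_{m-1}(X) \ge \frac{1}{10}\,\ell_\phi(X)$, and the refined form of Theorem~\ref{theo:GG} states that if every ball of some radius~$\rho$ has volume at most $c_m\,\rho^m$ then $\Rad_{m-1}(X) \le \rho$. Choosing the constant $a_m$ so that $a_m\,10^m \le c_m$ and applying this with $\rho = R/10$ --- legitimate because each ball $B(R/10)$ lies inside a concentric ball $B(R)$ of volume at most $a_m R^m = a_m 10^m (R/10)^m \le c_m (R/10)^m$ --- we get $\Rad_{m-1}(X) \le R/10$, hence $\ell_\phi(X) \le 10\,\Rad_{m-1}(X) \le R$; in particular $\ell_\phi(X) < \infty$, so the division in the algebraic estimate is valid. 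Combining the two estimates gives $\ent(X,g) \ge \delta/\ell_\phi(X) \ge \delta/R$, which is the local statement; the global bound then follows as in the first paragraph. Alternatively, the bound $\ell_\phi(X) \le R$ may be read off directly from the refined form of Corollary~\ref{coro:VL}.

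There is no serious obstacle here: the two hard inputs --- the skeleton-by-skeleton extension argument behind Theorem~\ref{theo:FR} and the width inequality of Theorem~\ref{theo:GG} --- are already available, and the whole content of Theorem~\ref{theo:entvol} is that $\delta$-thickness, via Proposition~\ref{prop:ent-L} and Lemma~\ref{lem:surj}, upgrades the lower bound on $\ell_\phi(X)$ (equivalently on $\Rad_{m-1}(X)$) coming from $\phi$-essentiality into a lower bound on the volume entropy. The only delicate points are pure bookkeeping: choosing $a_m$ (hence $\lambda_m$) small enough to absorb the factor $10^m$ lost in passing from $R$-balls to $(R/10)$-balls, and noting that the standing hypotheses force $0 < \ell_\phi(X) < \infty$ so that all the displayed inequalities are meaningful.
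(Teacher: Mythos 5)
Your proposal is correct and follows essentially the same route as the paper: the chain $\delta \le \ent(\phi(H)) \le \ent(H) \le \ent(X,g)\cdot\ell_\phi(X)$ via $\delta$-thickness, Lemma~\ref{lem:surj} and Proposition~\ref{prop:ent-L}, combined with the upper bound on $\ell_\phi(X)$ coming from Theorem~\ref{theo:FR} and Theorem~\ref{theo:GG} (the paper takes $a_m = c_m/10^m$ exactly as you do). The only cosmetic difference is that you prove the local ball-volume statement first and deduce the global bound from it, whereas the paper derives the global bound directly from Corollary~\ref{coro:VL}; your handling of the supremum in the definition of $\ell_\phi(X)$ by taking $\ell > \ell_\phi(X)$ and passing to the limit is in fact slightly more careful than the paper's.
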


\begin{proof}
Fix a piecewise Riemannian metric~$g$ on~$X$.
By definition of~$\ell_\phi(X)$, see Definition~\ref{def:ell}, there exists a subgroup~$H \leqslant \pi_1(X)$ generated by finitely many loops of~$X$ of length at most~$\ell_\phi(X)$ based at the same point whose $\phi$-image $H_0=\phi(H) \leqslant G$ has exponential growth.
Since $G$ is $\delta$-thick, the algebraic entropy of~$H_0$ is at least~$\delta$.
By Lemma~\ref{lem:surj}, Proposition~\ref{prop:ent-L} and Corollary~\ref{coro:VL}, we have
\begin{equation} \label{eq:chain}
\delta \leq \ent(H_0) \leq \ent(H) \leq \ent(X,g) \, \ell_\phi(X) \leq c'_m \, \ent(X,g) \, \vol(X,g)^\frac{1}{m}
\end{equation}
for some explicit positive constant~$c'_m$ which depends only on~$m$ and can be taken arbitrarily close to~$c_m^{-\frac{1}{m}}$, where $c_m$ is the constant in Theorem~\ref{theo:GG}.

Let $a_m = \frac{c_m}{10^m}$. 
Suppose that every ball~$B(R)$ of radius~$R$ has volume at most~$a_m \, R^m$.
By Theorem~\ref{theo:FR} and Theorem~\ref{theo:GG}, we derive 
\[
\frac{1}{10} \, \ell_\phi(X) \leq \Rad_{m-1}(X) \leq \frac{1}{10} \, R.
\]
Hence, $\ell_\phi(X) \leq R$.
Replacing this bound in the inequality chain~\eqref{eq:chain}, we obtain the desired lower bound for the volume entropy of~$X$.
\end{proof}

We immediately deduce the following result.

\begin{corollary} \label{coro:final}
Let $X$ be an essential connected finite simplicial $m$-complex whose fundamental group is thick and loop space is $m$-tamed.
Then the minimal volume entropy of~$X$ is positive.
\end{corollary}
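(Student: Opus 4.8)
The plan is to read off Corollary~\ref{coro:final} from Theorem~\ref{theo:entvol} by taking the homomorphism $\phi$ there to be the identity $\mathrm{id}:\pi_1(X)\to\pi_1(X)$, so that $G=\pi_1(X)$. With this choice the three hypotheses of Theorem~\ref{theo:entvol} become, word for word, the three hypotheses of the corollary: being $\phi$-essential is being essential; the group $G$ being $\delta$-thick for some $\delta>0$ is precisely the hypothesis that $\pi_1(X)$ is thick; and the free loop space of $K(G,1)=K(\pi_1(X),1)$ being $m$-tamed is the hypothesis on the loop space of $X$. Theorem~\ref{theo:entvol} then gives
\[
\omega(X)\ \geq\ \lambda_m\,\delta\ >\ 0,
\]
where $\lambda_m>0$ depends only on $m$ and $\delta$ is any thickness constant of $\pi_1(X)$. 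This is all that is strictly required.

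For the reader's benefit I would also recall the mechanism, which specializes cleanly. Fix a piecewise Riemannian metric $g$ on $X$. By Definition~\ref{def:ell} there is a subgroup $H\leqslant\pi_1(X)$ generated by finitely many loops of length at most $\ell(X)$ based at a common point and having exponential growth; here $\phi$-essentiality --- through the obstruction-theoretic extension argument over the mapping cylinder $Q$ into the enlarged complex $\widehat{K}$ in the proof of Theorem~\ref{theo:FR} --- is exactly what prevents $\ell(X)$ from seeing only subexponential subgroups. Since $\phi=\mathrm{id}$, the image $\phi(H)$ equals $H$ (so Lemma~\ref{lem:surj} is vacuous here), and thickness gives $\ent(H)\geq\delta$. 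Combining this with Proposition~\ref{prop:ent-L}, which gives $\ent(H)\leq\ent(X,g)\,\ell(X)$, and Corollary~\ref{coro:VL}, which gives $\ell(X)\leq c'_m\,\vol(X,g)^{1/m}$ (a consequence of Theorem~\ref{theo:FR} together with the width--volume inequality Theorem~\ref{theo:GG}), we obtain
\[
\delta\ \leq\ \ent(X,g)\,\ell(X)\ \leq\ c'_m\,\ent(X,g)\,\vol(X,g)^{1/m}.
\]
Taking the infimum over all piecewise Riemannian metrics $g$ on $X$ yields $\omega(X)\geq\delta/c'_m>0$.

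There is no genuine obstacle left at this stage: the analytic heart (the Urysohn-width/volume inequality of Guth and Liokumovich--Lishak--Nabutovsky--Rotman, simplified by Papasoglu), the topological heart (the extension argument exploiting the $m$-tamed loop space and the contractibility statement of Proposition~\ref{prop:inj}.\eqref{contractible}), and the group-theoretic input (thickness) have all been established in the quoted results. The only thing to verify is routine bookkeeping --- that $\ell(X)$ is a well-defined positive metric invariant and that $\phi$-essentiality forces genuine exponential growth of the associated subgroup --- and this is immediate from Definition~\ref{def:ell} and the proof of Theorem~\ref{theo:FR}.
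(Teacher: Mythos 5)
Your proposal is correct and coincides with the paper's own argument: Corollary~\ref{coro:final} is deduced immediately from Theorem~\ref{theo:entvol} by taking $\phi$ to be the identity homomorphism on $\pi_1(X)$, so that $\phi$-essentiality, $\delta$-thickness of $G$, and the $m$-tamed loop space hypothesis become exactly the hypotheses of the corollary. Your additional recap of the mechanism just restates the proof of Theorem~\ref{theo:entvol} and adds nothing that needs checking beyond what is already established there.
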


\begin{example} \label{ex:final}
Consider the aspherical simplicial $2$-complex~$X$ introduced in Section~\ref{subsec:ex}.
The simplicial complex~$X$ admits a ${\rm CAT}(0)$ cube complex structure induced by the ${\rm CAT}(0)$ cube complex structure of the punctured surface~$\Sigma$ of Section~\ref{subsec:ex}.
It follows from the second item of Example~\ref{ex:thick} that the fundamental group of~$X$ is thick; see~\cite{KS19}.
Now, as previously observed in Example~\ref{ex:CAT(-1)}, the loop space of~$X$ is $2$-tamed.
Therefore, the minimal volume entropy of~$X$ is positive by Corollary~\ref{coro:final}.
\end{example}



\begin{thebibliography}{150}





\bibitem{AS57} 
Adel'son-Vel'ski\u{\i}, G. M.; Shre\u{\i}der, Yu. A.: The Banach mean on groups (in Russian).
\emph{Usp. Mat. Nauk} 12 (1957), no.~6(78), 131--136.

\bibitem{AP03} 
Anderson, J.; Paternain, G.: The minimal entropy problem for 3-manifolds with zero simplicial volume. 
In: Geometric methods in dynamics~I, \emph{Ast\'erisque} 286 (2003) 63--79.

\bibitem{AO96}
 Arzhantseva, G. N.; Ol'shanski\u{\i}, A. Yu.: Generality of the class of groups in which subgroups with a lesser number of generators are free.
 \emph{Math. Notes} 59 (1996), no.~3-4, 350--355.

\bibitem{B93} 
Babenko, I.: Asymptotic invariants of smooth manifolds.
\emph{Russian Acad. Sci. Izv. Math.} 41 (1993), no.~1, 1--38.

\bibitem{B06} 
Babenko, I.: Topologie des systoles unidimensionnelles.
\emph{Enseign. Math. (2)} 52 (2006), no.~1-2, 109--142. 

\bibitem{B08} 
Babenko, I.: Addenda \`a l'article intitul\'e ``Topologie des systoles unidimensionnelles''
\emph{Enseign. Math. (2)} 54 (2008), no.~3-4, 397--398.

\bibitem{BS}
Babenko, I.; Sabourau, S.: Volume entropy semi-norm.
See arXiv:1909.10803.

\bibitem{BK19}
Balacheff, F.; Karam, S.: Macroscopic Schoen conjecture for manifolds with nonzero simplicial volume.
\emph{Trans. Amer. Math. Soc.} 372 (2019), no.~10, 7071--7086. 



\bibitem{BCG91} 
Besson, G.; Courtois, G.; Gallot, S.: Volume et entropie minimale des espaces localement sym\'etriques.
\emph{Invent. Math.} 103 (1991), no.~2, 417--445. 

\bibitem{BCG11} 
Besson, G.; Courtois, G.; Gallot, S.: Uniform growth of groups acting on Cartan-Hadamard spaces.
\emph{J. Eur. Math. Soc. (JEMS)} 13 (2011), no.~5, 1343--1371. 

\bibitem{BCGS}
Besson, G.; Courtois, G.; Gallot, S.; Sambusetti, A.: Curvature-Free Margulis Lemma for Gromov-Hyperbolic Spaces.
Preprint arXiv:1712.08386.

\bibitem{bow}
Bowditch, B. H.: Notes on locally CAT(1) spaces. In Geometric group theory (Columbus, OH, 1992), 1--48,
\emph{Ohio State Univ. Math. Res. Inst. Publ.}, vol.~3, de Gruyter, 1995. 

\bibitem{BF} 
Breuillard, E.; Fujiwara, K.: On the joint spectral radius for isometries of non-positively curved spaces and uniform growth.
Preprint arXiv:1804.00748.

\bibitem{bh} 
Bridson, M..; Haefliger, A.: Metric spaces of non-positive curvature. 
\emph{Grundlehren der Mathematischen Wissenschaften}, vol.~319, Springer-Verlag, 1999.

\bibitem{Brunnbauer08}  
Brunnbauer, M.: Homological invariance for asymptotic invariants and systolic inequalities.
\emph{Geom. Funct. Anal.} 18 (2008), no.~4, 1087--1117.

\bibitem{bumagin}
Bumagin, I.: On small cancellation $k$-generated groups with $(k-1)$-generated subgroups all free. 
\emph{Internat. J. Algebra Comput.} 11 (2001), no.~5, 507--524.

\bibitem{CSC}
Ceccherini-Silberstein, T.; Coornaert, M.: Cellular automata and groups.
\emph{Springer Monographs in Mathematics}, Springer-Verlag, 2010.

\bibitem{ChG}
Cheeger, J.; Gromov, M.: Collapsing Riemannian manifolds while keeping their curvature bounded. I.
\emph{J. Diff. Geom.} 23 (1986), no.~3, 309--346. 

\bibitem{CB03}
Connell, C.; Farb, B.: Minimal entropy rigidity for lattices in products of rank one symmetric spaces.
\emph{Comm. Anal. Geom.} 11 (2003), no.~5, 1001--1026. 

\bibitem{delzant} 
Delzant, T.: Sous-groupes distingu\'es et quotients des groupes hyperboliques.
\emph{Duke Math. J.} 83 (1996), no.~3, 661--682. 

\bibitem{Dinaburg71} 
Dinaburg, E. I.: A connection between various entropy characterizations of dynamical systems.
\emph{Izv. Akad. Nauk SSSR Ser. Mat.} 35 (1971) 324--366. 

\bibitem{DV73} 
Dyer, E.; Vasquez, A. T.: Some small aspherical spaces.
Collection of articles dedicated to the memory of Hanna Neumann, III.
\emph{J. Austral. Math. Soc.} 16 (1973), 332--352. 

\bibitem{Efrem53} 
Efremovich, V. A: On proximity geometry of Riemannian manifolds.
\emph{Uspekhi Mat. Nauk.} 8, 5(57) (1953), 189--191. 

\bibitem{GGK} 
Guillemin, V.; Ginzburg, V.; Karshon, Y.: Moment maps, cobordisms, and Hamiltonian group actions.
Appendix J by M. Braverman. 
\emph{Mathematical Surveys and Monographs}, 98. Amer. Math. Soc., 2002.

\bibitem{gro82} 
Gromov, M.: Volume and bounded cohomology. 
\emph{Inst. Hautes \'Etudes Sci. Publ. Math.} 56 (1982), 5--99.

\bibitem{gro83} 
Gromov, M.: Filling Riemannian manifolds. 
\emph{J. Differential Geom.} 18 (1983), no.~1, 1--147.


\bibitem{Gromov99}
Gromov, M. Metric structures for Riemannian and non-Riemannian spaces.
\emph{Progr. in Mathematics}, vol.~152, Birkh\"auser, 1999.

\bibitem{guba}
Guba, V. S.: A finite-generated simple group with free 2-generated subgroups.
\emph{Siberian Math. J.} 27 (1986), no.~5, 670--684.

\bibitem{GJN}
Gupta, R.; Jankiewicz, K.; Ng, T.: Announcement, 2019.

\bibitem{Guth17}
Guth, L.: Volumes of balls in Riemannian manifolds and Uryson width.
\emph{J. Topol. Anal.} 9 (2017) 195--219. 

\bibitem{hansen} 
Hansen, V. L.: On the fundamental group of a mapping space. An example. 
\emph{Compositio Math.} 28 (1974), no.~1, 33--36.

\bibitem{hatcher} 
Hatcher, A.: Algebraic topology.  Cambridge University Press, 2002.

\bibitem{Ivanov80}
Ivanov, N.: Notes on the bounded cohomology theory.
See arXiv:1708.05150.

\bibitem{katok} 
Katok, A.: Entropy and closed geodesics. 
\emph{Ergodic Theory Dynam. Systems} 2 (1982), no.~3--4, 339--365.

\bibitem{KRS} 
Katz, M.; Rudyak, Y.; Sabourau, S.: Systoles of $2$-complexes, Reeb graph, and Grushko decomposition.
\emph{Int. Math. Res. Not.} (2006), Art.~ID~54936, 30~pp.
      
\bibitem{KS19} 
Kar, A.; Sageev, M.: Uniform exponential growth for ${\rm CAT}(0)$ square complexes. 
\emph{Algebr. Geom. Topol.} 19 (2019), no.~3, 1229--1245.

\bibitem{LLNR} 
Liokumovich, Y.; Lishak, B.; Nabutovsky, A.; Rotman, R.: Filling metric spaces.
See arXiv:1905.06522. 

\bibitem{Manning79} 
Manning, A.: Topological entropy for geodesic flows. 
\emph{Ann. of Math. (2)} 110 (1979), no.~3, 567--573.

\bibitem{merlin}
Merlin, L.: Minimal entropy for uniform lattices in product of hyperbolic planes.
\emph{Comment. Math. Helv.} 91 (2016), no.~1, 107--129. 

\bibitem{Milnor68} 
Milnor, J.: A note on curvature and fundamental group. 
\emph{J. Differential Geometry} 2 (1968) 1--7.

\bibitem{mineyev} 
Mineyev, I.: Straightening and bounded cohomology of hyperbolic groups.
\emph{Geom. Funct. Anal.} 11 (2001), no.~4, 807--839.

\bibitem{Nab} 
Nabutovsky, A.: Linear bounds for constants in Gromov's systolic inequality and related results.
See arXiv:1909.12225. 

\bibitem{Pap} 
Papasoglu, P.: Uryson width and volume.
See arXiv:1909.03738. 

\bibitem{PP03}
Paternain, G.; Petean, J.: Minimal entropy and collapsing with curvature bounded from below. 
\emph{Invent. Math.} 151 (2003), no.~2, 415--450.


\bibitem{pieroni} 
Pieroni, E.: Minimal entropy of $3$-manifolds.
See arXiv:1902.09190

\bibitem{rham}
de Rham, G.: Sur la r\'eductibilit\'e d'un espace de Riemann. 
\emph{Comment. Math. Helv.} 26 (1952), 328--344.

\bibitem{sab}
Sabourau, S.: Systolic volume and minimal entropy of aspherical manifolds.
\emph{J. Differential Geom.} 74 (2006), no.~1, 155--176.

\bibitem{sab17}
Sabourau, S.: Small volume of balls, large volume entropy and the Margulis constant.
\emph{Math. Ann.} 369 (2017), no. 3-4, 1557--1571.

\bibitem{Sambusetti99} 
Sambusetti, A.: Minimal entropy and simplicial volume. 
\emph{Manuscripta Math.} 99 (1999), no.~4, 541--560.

\bibitem{Sambusetti00} 
Sambusetti, A.: On minimal entropy and stability. 
\emph{Geom. Dedicata} 81 (2000), no.~1-3, 261--279. 

\bibitem{SS09} 
Su\'arez-Serrato, P.: Minimal entropy and geometric decompositions in dimension four. 
\emph{Algebr. Geom. Topol.} 9 (2009) 365--395.

\bibitem{Shvarts55}  
\v{S}varc, A. S.: A volume invariant of coverings.
\emph{Dokl. Akad. Nauk SSSR (N.S.)} 105 (1955), 32--34. 

\bibitem{tits} 
Tits, J.: Free subgroups in linear groups.
\emph{J. Algebra} 20 (1972), 250--270. 

\bibitem{wall}
Wall, C. T. C.: Finiteness conditions for CW-complexes. 
\emph{Ann. of Math. (2)} 81 (1965), 56--69.

\bibitem{Wilson04}
Wilson, J.: On exponential growth and uniform exponential growth for groups.
\emph{Invent. Math.} 155 (2004) 287--303. 


\end{thebibliography}
\end{document}